\renewcommand{\bar}{\overline}
\renewcommand{\hat}{\widehat}
\renewcommand{\tilde}{\widetilde}
\newtheorem{thm}{Theorem}[section]
\newtheorem{lem}[thm]{Lemma}
\newtheorem{prp}[thm]{Proposition}
\theoremstyle{definition}
\definecolor{wco}{rgb}{0.9,0.2,0.6}
\numberwithin{equation}{section} \theoremstyle{remark}
\newcommand{\ua}{\uparrow}
\newcommand{\da}{\downarrow}
\title{{\bf Distribution dependent SDEs with multiplicative fractional noise}
}
\author{
{\bf  Xiliang Fan$^{a)}$, Shao-Qin Zhang$^{b)}$}\\
\footnotesize{$^{a)}$School of Mathematics and Statistics, Anhui Normal University, Wuhu 241002, China}\\
\footnotesize{Email: fanxiliang0515@163.com}\\
\footnotesize{$^{b)}$School of Statistics and Mathematics, Central University of Finance and Economics, Beijing 100081, China}\\
\footnotesize{Email: zhangsq@cufe.edu.cn}
}
\begin{document}
\def\R{\mathbb R}
\def\N{\mathbb N}
\def\E{\mathbb E}
\def\H{\mathcal{H}}
\def\Q{\mathbb Q}
\def\P{\mathbb{P}}
\def\S{\mathbb{S}}
\def\Y{\mathbb{Y}}
\def\W{\mathbb{W}}
\def\D{\mathbb{D}}
\def\G{\mathcal{G}}
\def\A{\mathcal{A}}
\def\F{\mathcal{F}}

\def\cH{\mathcal{H}}
\def\cS{\mathcal{S}}

\def\sA{\mathscr{A}}
 \def\sB{\mathscr {B}}
 \def\sC{\mathscr {C}}
  \def\sD{\mathscr {D}}
 \def\sF{\mathscr{F}}
\def\sG{\mathscr{G}}
\def\sL{\mathscr{L}}
\def\sP{\mathscr{P}}
\def\sM{\mathscr{M}}
\def\eq{\equation}
\def\beg{\begin}
\def\ep{\epsilon}
\def\ve{\varepsilon}
\def\vp{\varphi}
\def\vr{\varrho}
\def\om{\omega}
\def\Om{\Omega}
\def\si{\sigma}
\def\ff{\frac}
\def\sq{\sqrt}
\def\kk{\kappa}
\def\de{\delta}
\def\<{\langle} \def\>{\rangle}
\def\Ga{\Gamma}
\def\ga{\gamma}
\def\na{\nabla}
\def\be{\beta}
\def\al{\alpha}
\def\pp{\partial}
 \def\ti{\tilde}
\def\1{\lesssim}
\def\ra{\rightarrow}
\def\da{\downarrow}
\def\upa{\uparrow}
\def\l{\ell}
\def\8{\infty}
\def\3{\triangle}
 \def\DD{\Delta}
\def\m{{\bf m}}
\def\B{\mathbf B}
\def\e{\text{\rm{e}}}
\def\la{\lambda}
\def\th{\theta}
\def\cW{\mathcal W}

\def\d{\text{\rm{d}}}
   \def\ess{\text{\rm{ess}}}
\def\Ric{\text{\rm{Ric}}} \def \Hess{\text{\rm{Hess}}}
 \def\ua{\underline a}
  \def\Ric{\text{\rm{Ric}}}
\def\cut{\text{\rm{cut}}}      \def\alphaa{\mathbf{r}}     \def\r{r}
\def\gap{\text{\rm{gap}}} \def\prr{\pi_{{\bf m},\varrho}}  \def\r{\mathbf r}

\def\Tilde{\tilde} \def\TILDE{\tilde}\def\II
{\mathbb I}
\def\i{{\rm in}}\def\Sect{{\rm Sect}}

\renewcommand{\bar}{\overline}
\renewcommand{\hat}{\widehat}
\renewcommand{\tilde}{\widetilde}

\allowdisplaybreaks
\maketitle
\begin{abstract}
The well-posedness is investigated for distribution dependent stochastic differential equations driven by fractional Brownian motion with Hurst parameter $H\in (\ff {\sq 5-1} 2,1)$ and distribution dependent multiplicative noise. To this aim, we introduce a H\"older space of probability measure paths which is a complete metric space under a new metric.
Our arguments rely on a mix of contraction mapping principle on the H\"older space and fractional calculus tools.
We also establish the large and moderate deviation principles for this type of equations via the weak convergence criteria in the factional Brownian motion setting, which extend previously known results in the additive setting.
\end{abstract}
AMS Subject Classification: 60H10, 60G22 \\
\noindent
Keywords: Distribution dependent SDE; fractional Brownian motion; well-posedness; large deviation principle; moderate deviation principle

\section{Introduction}

Distribution dependent stochastic differential equation (DDSDE), also known as mean-field equation or McKean-Vlasov equation,
was first introduced by McKean \cite{McKean66} to model plasma dynamics.
DDSDE of the form
\begin{align}\label{In1}
\d X_t=b(X_t,\sL_{X_t})\d t+\sigma(X_t,\sL_{X_t})\d W_t, \ \ X_0=\xi,
\end{align}
provides the description of evolution $(\sL_{X_t})_{t\geq0}$ in the space of probability measures
with the help of its intrinsic link with nonlinear Fokker-Planck equations,
where $\sL_{X_t}$ stands for the law of $X_t$.
Another important feature of DDSDEs is their relation to the empirical behaviour of large systems of interacting dynamics, i.e.,
the mean-field limit of the following systems of particles subject to a mean-field interaction
\beg{align}\label{In2}
\d X^{i,N}_t=b(X^{i,N}_t,\sL^N_{X^N_t})\d t+\sigma(X^{i,N}_t,\sL^N_{X^N_t})\d W_t,\ \ \sL^N_{X^N_t}=\ff 1 N \sum\limits_{i=1}^N\delta_{X_t^{i,N}}
\end{align}
to \eqref{In1}, as the number $N$ of particles tends to infinity.
For these reasons, DDSDEs have found applications in numerous fields, including statistical physics, mean-field games and mathematical finance etc, see for instance \cite{BT97,CD13,JW17,LL07} and the references therein.
Classical results on the well-posedness of \eqref{In1} and related mean-field limit may date back to Sznitman \cite{Sznitman91}.
The subsequent works mainly put emphasis on the investigation of these two topics under more and more general settings,
see \cite{DV95,Me96,BDF12,JW18,RZ21} for a tiny sample.
Recently, the field of DDSDEs has witnessed a flourishing growth,
one can refer to \cite{BLPR17,CM18} for value functions and associated PDEs,
\cite{BRW,HW,RW,Song,Wang18} for  Bismut formula for the Lions derivative, Harnack type inequality, gradient estimate and exponential ergodicity,
and many other aspects.

Classical stochastic calculus makes sense of equation \eqref{In1} in a probability space $(\Omega,\sF,\P)$, only when the driving noise $W$ is a semimartingale under $\P$ and for some filtration or a Markov process.
However, this setting seems to be too restrictive in many cases, especially when $W$ exhibits long range dependence.
In addition, it was pointed out in \cite{CDFM20} which studied equation \eqref{In1} with $\sigma$ being the identity matrix,
that under the Lipschitz condition on $b$, the mean-field limit of equation \eqref{In2} to equation \eqref{In1} may hold for a larger class of driving processes $W$ regardless of being Markov or semimartingale.
These prompted several authors to address equation \eqref{In1} with driving process being fractional Brownian motion (fBM) in the additive situation:
\begin{align}\label{In3}
\d X_t=b(X_t,\sL_{X_t})\d t+\sigma(\sL_{X_t})\d B^H_t ,\ \ X_0=\xi.
\end{align}
Indeed, the fBM is commonly regarded as the simplest stochastic process modeling time-correlated noise.
A $d$-dimensional fBM $B^H=(B^{H,1},\cdots,B^{H,d})$ with Hurst parameter $H\in(0,1)$ is a centered Gaussian process,
whose covariance structure is defined by
\beg{align}\label{In7}
 R_H(t,s)=\frac{1}{2}\left(t^{2H}+s^{2H}-|t-s|^{2H}\right),\ \  t,s\in[0,T].
\end{align}
This means that $B^H$ is $(H-\ve)$-order H\"{o}lder continuous a.s. for any $\ve\in(0,H)$ and is an $H$-self similar process with stationary increments,
which convert fBM into a natural generalization of Wiener process ($H=1/2)$.
However, the increments of $B^H$ are correlated with a power law correlation decay that leads fBM into a non-Markov process,
which is the dominant feature of equation \eqref{In3} and beyond the realm of It\^{o} calculus.
Huang, Suo, Yuan and the first author \cite{FHSY} proved the well-posedness of equation \eqref{In3} in $\cS^p([0,T])$
(the space of  $\R^d$-valued, continuous $(\sF_t)_{t\in[0,T]}$-adapted processes $\psi$ on $[0,T]$ satisfying
$\E(\sup_{t\in[0,T]}|\psi_t|^p)<+\infty$) under the Lipschitz condition,
in which  the integral  $\int_0^t\si(\sL_{X_s})\d B_s^H$ is interpreted in the Wiener sense because of the determinacy of $\si(\sL_{X_\cdot^\ep})$.
The method of proof relies on the Picard iteration and the Hardy-Littlewood inequality associated with the fractional Riemann-Liouville integral operator.
Besides, we also establish the Bismut formulas for the Lions derivative to equation \eqref{In3} in both non-degenerate and degenerate cases via Malliavin calculus.
Galeati, Harang and Mayorcas \cite{GHM2} showed the well-posedness result to equation \eqref{In3} with $\sigma=\mathrm{I}_{d\times d}$ (the identity matrix on $\R^d\otimes\R^d$)
under the irregular, possibly distributional drift by using some stability estimates.
But, to the best of our knowledge, a more challenging problem that whether equation \eqref{In3} is well-posed in the multiplicative setting
is still open.

In view of these mentioned works, our first goal is to answer the following question:\\

Q{\footnotesize{UESTION}} 1. Is equation \eqref{In3} with multiplicative fractional noise well-posed or ill-posed?
If well-posed, what are the conditions imposed on the coefficients $b$ and $\sigma$? \\

More precisely, we are concerned with the following DDSDE with multiplicative fractional noise:
\begin{align}\label{In4}
\d X_t=b(X_t,\sL_{X_t})\d t+\si(X_t,\sL_{X_t})\d B_t^H,
\end{align}
where $b:\R^d\times\sP_\th(\R^d)\rightarrow\R^d, \si:\R^d\times\sP_\th(\R^d)\rightarrow\R^d\otimes\R^d$ and $H\in(1/2,1)$,
and where $\sP_\th(\R^d)$ is the set of probability measures on $\R^d$ with finite $\th$-th moment.
As shown in our previous work \cite[Remark 3.2(ii)]{FHSY}, even though $\sigma=\sigma(x)$ independent of the measure, we can't be allowed to construct a Cauchy sequence in $\cS^p([0,T])$.
Indeed, for the Picard iteration sequence $(X^n)_{n\geq1}$ corresponding to \eqref{In4},
by a cumbersome calculation we derive that there exist two positive constants $C_{p,T,H}$ and $\tilde{C}_{p,T,H}$ such that
\beg{align}\label{In5}
\sup\limits_{s\in[0,t]}|X^n_s-X^{n-1}_s|^p
\leq& C_{p,T,H}\exp\left\{\tilde{C}_{p,T,H}\|B^H\|_{T,\be}^{\ff 1 \be}\left(1+\|X^{n-1}\|_{T,\be}^{\ff 1 \be}\right)\right\}\cr
&\times\int_0^t\E\bigg(\sup\limits_{u\in[0,r]}|X^{n-1}_u-X^{n-2}_u|^p\bigg)\d r,\ \ t\in[0,T]
\end{align}
with $\tilde{C}_{p,T,H}\ra 0$ as $T\ra0$ and $1-H<\be<H$ (see \eqref{In8} below for the definition of $\|\cdot\|_{T,\be}$).
In view of our previous work \cite[Theorem 3.1 and relation (3.25)]{FZ},
the appearance of the term $\|X^{n-1}\|_{T,\be}^{1/\be}$ in \eqref{In5} leads to a failure of establishing a Cauchy sequence $(X^n)_{n\geq1}$ in $\cS^p([0,T])$. In this paper, we adopt the fixed point argument, which dates back to \cite{Sznitman91}. For the iteration method, $\int_0^t\si(X^{n}_s,\sL_{X^n_s})\d B^H_s$ makes sense, and $\int_0^t\si(X_s,\sL_{X_s})\d B_s^H$ can be established by taking limits if the iteration sequence converges. 
While, to use the contraction mapping principle, we first have to choose a suitable metric space on  probability measures paths to make sense and estimate the (fractional) stochastic integral $\int_0^t\si(X_s,\sL_{X_s})\d B_s^H$. To this end, we introduce a new  metric
\beg{equation*}
\W_{2,T,\be}(\mu,\nu)=\W_{2,T}(\mu,\nu)+\sup_{0\leq s_1<s_2\leq T}\ff {W_{2,s_1,s_2}^c(\mu,\nu)} {(s_2-s_1)^\be},~\mu,\nu\in\sP_2(W_T^d),
\end{equation*}
where $\sP_2(W_T^d)$ is the collection of all probability measures $\mu$ on $W_T^d:=C([0,T];\R^d)$ such that
\[\int_{W_T^d}\left(\sup_{0\leq t\leq T}|\ga(t)|^2\right)\mu(\d \ga)<+\infty,\]
and $\W_{2,T}, W_{2,s_1,s_2}^c$ are specified in later sections (see \eqref{AddHO1} and \eqref{AddHO2} below).  The metric $\W_{2,T,\be}$ on probability measures paths is the equal of the H\"older norm on the function space.
We can prove that the H\"older space $\sP_{2,\be}(W_T^d)$ which is a subspace of $\sP_{2}(W_T^d)$  (see \eqref{AddHO3} below)  is a complete metric space under the metric $\W_{2,T,\be}$.
This allows the well-definedness  of $\int_s^t\si(X_r,\sL_{X_r})\d B_r^H$ and  gives fractional calculus for the distribution dependent part (see Lemma \ref{EsNoi}) to prove the existence and uniqueness of solutions to equation \eqref{In4}. This is our main result, see Theorem \ref{ExU}. We believe that  results concerned the metric $\W_{2,T,\be}$ might be also of independent interest, and they are presented in Subsection 3.2.
We also develop a fractional calculus technique, that is borrowing a slightly more path regularity from fractional Brownian motion to estimate the H\"older semi-norm of the solution $\|X\|_{s,t,\be}$.
With the solution in hand, what may be interesting is that whether equation \eqref{In4} could be well-behaved.
So, the second part of the paper is motivated by the following question:\\

Q{\footnotesize{UESTION}} 2.  What will small-noise asymptotic behaviors of equation \eqref{In4} look like? \\

To answer this question, we are concerned with equation \eqref{In4} with small noise, namely, for any $\ep>0$,
\begin{align}\label{In6}
\d X_t^\epsilon =b(X_t^\ep,\mathscr{L}_{X_t^\ep})\d t+\ep^H\sigma(X_t^\ep,\mathscr{L}_{X_t^\ep})\d B_t^H,\ \ X_0^\ep =x.
\end{align}
Our main purpose is to investigate large deviation principle (LDP) and moderate deviation principle (MDP) of \eqref{In6} as $\ep$ goes to $0$.
More precisely, let
\begin{align*}
Y^\ep=\ff {X^\ep-X^0}{\ep^H\zeta(\ep)}
\end{align*}
with $X^0$ being the limit of $X^\ep$ in some sense, we prove that if $\zeta(\ep)=1/\ep^H$,
$X^\ep$ satisfies the LDP with speed $\ep^{2H}$ (see Theorem \ref{Th(LDP)}),
while if $\zeta(\ep)\ra+\infty$ and $\ep^H\zeta(\ep)\ra0$ as $\ep\ra0$,
$Y^\ep$ satisfies the LDP with speed $\zeta^{-2}(\ep)$, i.e., $X^\ep$ satisfies the MDP (see Theorem \ref{Th(mdp)}).
Recall that large and moderate deviation principles are mainly to calculate the probability of a rare event.
In the case of stochastic processes, the idea lies in finding a deterministic path around which the diffusion is concentrated with high probability,
which implies that the stochastic motion can be regarded as a small perturbation of this deterministic path.
A conventional way to handle large and moderate deviation principle problems is the time discretization method and the contraction principle originated by  \cite{FW84}, which could be very complicated for infinite dimensional nonlinear stochastic dynamical systems.
Another general approach for investigating these problems is the well-known weak convergence method introduced in \cite{BD00,BDM08},
which depends on a variational representation for positive functionals of Brownian motion or Poisson random measure.
After that, many authors have applied this approach to various stochastic dynamical systems,
among which let us just mention, for instance, the woks of  \cite{BGJ17,BDG16,BDM11,BS20,DWZZ20,DXZZ17,DST19,HLL21,LSZZ22,MSZ21,SY21,WZZ15,ZZ15} and the references therein.

Based on the weak convergence method, we obtain the large and moderate deviation principles for equation \eqref{In6},
and we use the variational framework and the weak convergence criteria in the factional Brownian motion setting that has been constructed in our recent work \cite{FYY}.
Let us stress that in comparison to \cite{FYY} where the large and moderate deviation principles for equation \eqref{In6} with additive fractional noise (namely, $\si=\si(\mathscr{L}_{X_t^\ep})$) are established,
we now work on equation \eqref{In6} with multiplicative fractional noise.
Therefore, we need to obtain some apriori estimates of solutions whose calculations are much more complicated than those in the additional situation.

The rest of the paper is structured as follows.
Section 2 is devoted to recalling some useful facts on fractional calculus, fractional Brownian motion and the $L$-derivative.
In Section 3, we focus on proving the existence and uniqueness of a solution to distribution dependent SDE with multiplicative fractional noise.
The main technical result is to show that the H\"older space of probability measure paths is a complete metric space under the introduced metric $\W_{2,T,\be}$.
In Section 4, we establish the large and moderate deviation principles for this type equations.
Section 5 will be devoted to the proofs of some auxiliary results.

$\mathbf{Notation.}$ The following notations are used in the sequel.

$\bullet$  $C([a,,b];\R^d)$ denotes the space of all $\R^d$-valued continuous functions $\varphi:[a,b]\ra\R^d$ with the norm
$\|\varphi\|_{a,b,\infty}=\sup_{a\leq s\leq b}|\varphi(s)|.$
For any $\alpha\in(0,1), C^\alpha([a,b];\R^d)$ is the set of all $\R^d$-valued $\alpha$-H\"{o}lder continuous functions.
If $\varphi\in C^\alpha([a,b];\R^d)$, we will make use of the notation
\beg{align}\label{In8}
\|\varphi\|_{a,b,\alpha}=\sup\limits_{a\leq s<t\leq b}\frac{|\varphi(t)-\varphi(s)|}{|t-s|^\alpha}.
\end{align}
In the case of the interval $[0,T]$, we abbreviate $\|\varphi\|_{0,T,\infty}$ and $\|\varphi\|_{0,T,\alpha}$ as $\|\varphi\|_{T,\infty}$ and $\|\varphi\|_{T,\alpha}$, respectively.

$\bullet$ For a bounded function $f$, we denote $\|f\|_\infty$ by its bounded constant,
and for a random variable $X\in L^p(\Omega,\sF,\P)$ with $p\in[1,+\infty)$, we set $\|X\|_{L^p(\Omega)}=(\E\|X\|^p)^{\ff 1 p}$.

$\bullet$  For $p\in[1,+\infty)$, $\sP_p(\mathbb{B})$ represents the set of $p$-integrable probability measures over a separable Banach space $\mathbb{B}$,
and define for $\mu\in\sP_p(\mathbb{B})$,
\beg{align*}
\|\mu\|_p:=\left(\int_{\mathbb{B}}\|x\|_{\mathbb{B}}^p\mu(\d x)\right)^{\ff 1 p}.
\end{align*}

$\bullet$ We let  $C, C_{H,T}, C_{H,T,M},$ etc., denote generic constants, whose values may vary at each appearance.

\section{Preliminaries}

This section is devoted to giving some of the basic elements of fractional calculus, Wiener space associated to fBM and the Lions derivative.

\subsection{Fractional integral and derivative}

Let $a,b\in\R$ with $a<b$.
For $f\in L^1([a,b],\R)$ and $\alpha>0$, the left-sided (respectively right-sided) fractional Riemann-Liouville integral of $f$ of order $\alpha$
on $[a,b]$ is defined as
\beg{align}\label{FrIn}
&I_{a+}^\alpha f(x)=\frac{1}{\Gamma(\alpha)}\int_a^x\frac{f(y)}{(x-y)^{1-\alpha}}\d y\\
&\qquad\left(\mbox{respectively}\ \ I_{b-}^\alpha f(x)=\frac{(-1)^{-\alpha}}{\Gamma(\alpha)}\int_x^b\frac{f(y)}{(y-x)^{1-\alpha}}\d y\right),\nonumber
\end{align}
where $x\in(a,b)$ a.e., $(-1)^{-\alpha}=\e^{-i\alpha\pi}$ and $\Gamma$ denotes the Gamma function.
In particular, if $\alpha=n\in\N$, they are consistent with the usual $n$-order iterated integrals.

Fractional differentiation can be given as an inverse operation.
Let $\alpha\in(0,1)$ and $p\geq1$.
If $f\in I_{a+}^\alpha(L^p([a,b],\R))$ (respectively $I_{b-}^\alpha(L^p([a,b],\R)))$, then the function $g$ satisfying $I_{a+}^\alpha g=f$ (respectively $I_{b-}^\alpha g=f$) is unique in $L^p([a,b],\R)$ and it coincides with the left-sided (respectively right-sided) Riemann-Liouville derivative
of $f$ of order $\alpha$ shown by
\beg{align*}
&D_{a+}^\alpha f(x)=\frac{1}{\Gamma(1-\alpha)}\frac{\d}{\d x}\int_a^x\frac{f(y)}{(x-y)^\alpha}\d y\\
&\qquad\left(\mbox{respectively}\ D_{b-}^\alpha f(x)=\frac{(-1)^{1+\alpha}}{\Gamma(1-\alpha)}\frac{\d}{\d x}\int_x^b\frac{f(y)}{(y-x)^\alpha}\d y\right).
\end{align*}
The corresponding Weyl representation is of the form
\beg{align}\label{FrDe}
&D_{a+}^\alpha f(x)=\frac{1}{\Gamma(1-\alpha)}\left(\frac{f(x)}{(x-a)^\alpha}+\alpha\int_a^x\frac{f(x)-f(y)}{(x-y)^{\alpha+1}}\d y\right)\\
&\qquad\left(\mbox{respectively}\ \ D_{b-}^\alpha f(x)=\frac{(-1)^\alpha}{\Gamma(1-\alpha)}\left(\frac{f(x)}{(b-x)^\alpha}+\alpha\int_x^b\frac{f(x)-f(y)}{(y-x)^{\alpha+1}}\d y\right)\right),\nonumber
\end{align}
where the convergence of the integrals at the singularity $y=x$ holds pointwise for almost all $x$ if $p=1$ and in the $L^p$ sense if $p>1$.
For more details, we refer the reader to \cite{SKM93}.

Suppose that $f\in C^\lambda([a,b];\R^d)$ and $g\in C^\mu([a,b];\R^d)$ with $\lambda+\mu>1$.
By \cite{Young36a}, the Riemann-Stieltjes integral $\int_a^bf\d g$ exists.
In \cite{Zahle}, Z\"{a}hle provides an explicit expression for the integral $\int_a^bf\d g$ in terms of fractional derivatives.
That is, let $\lambda>\alpha$ and $\mu>1-\alpha$ with $\alpha\in(0,1)$.
Then the Riemann-Stieltjes integral can be expressed as
\beg{align}\label{Fibpf}
\int_a^bf\d g=(-1)^\alpha\int_a^bD_{a+}^\alpha f(t)D_{b-}^{1-\alpha}g_{b-}(t)\d t,
\end{align}
where $g_{b-}(t)=g(t)-g(b)$.
The relation \eqref{Fibpf} can be regarded as fractional integration by parts formula.

\subsection{Wiener space associated to fBM}

For some fixed $H\in(1/2,1)$, we consider $(\Omega,\sF,\P)$ the canonical probability space associated with fBM with Hurst parameter $H$.
That is, $\Omega=C_0([0,T],\R^d)$ is the Banach space of continuous functions vanishing at $0$ equipped with the supremum norm,
$\sF$ is the Borel $\si$-algebra and $\P$ is the unique probability measure on $\Omega$ such that the canonical process $\{B^H_t; t\in[0,T]\}$ is a $d$-dimensional fBM with Hurst parameter $H$.
We suppose that there is a sufficiently rich sub-$\si$-algebra $\sF_0\subset\sF$ independent of $B^H$ such that for any $\mu\in\sP_2(\R^d)$ there exists a random variable $X\in L^2(\Omega\ra\R^d,\sF_0,\P)$ with distribution $\mu$.
Let $\{\sF_t\}_{t\in[0,T]}$ be the filtration generated by $B^H$, completed and augmented by $\sF_0$.

Let $\mathscr{E}$ be the space of $\R^d$-valued step functions on $[0,T]$, and $\mathcal {H}$ the Hilbert space defined as the closure of
$\mathscr{E}$ with respect to the scalar product
\beg{align*}
\left\langle (\mathbb I_{[0,t_1]},\cdot\cdot\cdot,\mathbb I_{[0,t_d]}),(\mathbb I_{[0,s_1]},\cdot\cdot\cdot,\mathbb I_{[0,s_d]})\right\rangle_\cH=\sum\limits_{i=1}^dR_H(t_i,s_i).
\end{align*}
Recall here that $R_H(\cdot,\cdot)$ is given in \eqref{In7}.
Notice that by \cite{Decreusefond&Ustunel98a,ND},  $R_H(t,s)$ has the integral representation of the form
\begin{align*}
 R_H(t,s)=\int_0^{t\wedge s}K_H(t,r)K_H(s,r)\d r,
\end{align*}
where $K_H(t,s)$ is the square integrable kernel defined by
\begin{align*}
K_H(t,s)=C_Hs^{\ff 1 2-H}\int_s^t(r-s)^{H-\ff 3 2}r^{H-\ff 1 2}\d r, \ \ t>s
\end{align*}
with $C_H=\sqrt{\ff {H(2H-1)}{\mathcal{B}(2-2H,H-1/2)}}$ and $\mathcal{B}$ standing for the Beta function.
When $t\leq s$, we let $K_H(t,s)=0$.
The mapping $(\mathbb I_{[0,t_1]},\cdot\cdot\cdot,\mathbb I_{[0,t_d]})\mapsto\sum_{i=1}^dB_{t_i}^{H,i}$ can be extended to an isometry between $\H$ (also called the reproducing kernel Hilbert space) and the Gaussian space $\mathcal {H}_1$ associated to $B^H$. We denote this isometry by $\phi\mapsto B^H(\phi)$.

Now, let $(e_1,\cdots,e_d)$ designate the canonical basis of $\R^d$, one constructs the linear operator $K_H^*:\mathscr{E}\rightarrow L^2([0,T],\R^d)$ such that $K_H^*(\mathrm{I}_{[0,t]}e_i)=K_H(t,\cdot)e_i$.
According to \cite{Alos&Mazet&Nualart01a}, there holds the relation $\langle K_H^*\psi,K_H^*\phi\rangle_{L^2([0,T],\R^d)}=\langle\psi,\phi\rangle_\H$ for all $\psi,\phi\in\mathscr{E}$,
and then with the help of the bounded linear transform theorem, $K_H^*$ can be extended to an isometry between $\mathcal{H}$ and $L^2([0,T],\R^d)$.
Consequently, by \cite{Alos&Mazet&Nualart01a} again,
there exists a $d$-dimensional Wiener process $W$ defined on $(\Omega,\sF,\P)$ such that $B^H$ has the following Volterra-type representation
\beg{align*}
B_t^H=\int_0^tK_H(t,s)\d W_s, \ \ t\in[0,T].
\end{align*}
Furthermore, let us stress that $K_H^*$ has the following integral representations: for any $\psi,\phi\in\H$,
\begin{align}\label{IRKH}
(K_H^*\psi)(t)=\int_t^T\psi(s)\ff {\partial K_H(s,t)}{\partial s}\d s.
\end{align}
and
\begin{align}\label{Isom}
\langle K_H^*\psi,K_H^*\phi\rangle_{L^2([0,T],\R^d)}=\langle\psi,\phi\rangle_\H=H(2H-1)\int_0^T\int_0^T|t-s|^{2H-2}\langle\psi(s),\phi(t)\rangle_{\R^d}\d s\d t.
\end{align}
As a consequence, for any $\psi\in L^2([0,T],\R^d)$, we have
\begin{align}\label{EsH}
\|\psi\|^2_\H\leq2HT^{2H-1}\|\psi\|^2_{L^2}.
\end{align}
In addition, it can be shown that $L^{1/H}([0,T],\R^d)\subset\H$.\\

Next, we define the operator $K_H: L^2([0,T],\mathbb{R}^d)\rightarrow I_{0+}^{H+1/2}(L^2([0,T],\mathbb{R}^d))$ by
\begin{align*}
 (K_H f)(t)=\int_0^tK_H(t,s)f(s)\d s.
\end{align*}
Let us point out that the set $I_{0+}^{H+1/2}(L^2([0,T],\mathbb{R}^d))$ is the fractional version of the Cameron-Martin space.
Moreover, we denote by $R_H=K_H\circ K_H^*:\mathcal{H}\rightarrow I_{0+}^{H+1/2}(L^2([0,T],\mathbb{R}^d))$ the operator
\begin{align*}
(R_H\psi)(t)=\int_0^tK_H(t,s)(K_H^*\psi)(s)\d s.
\end{align*}
Since  $I_{0+}^{H+1/2}(L^2([0,T],\mathbb{R}^d))\subset C^{H}([0,T],\R^d)$ thanks to \cite[Theorem 3.6]{SKM93},
we know that for every $\psi\in\H$, $R_H\psi$ is H\"{o}lder continuous of order $H$, i.e., $R_H\psi\in C^{H}([0,T],\R^d)$.
On the other hand, in view of the Fubini theorem and the fact that $\ff {\partial K_H(s,r)}{\partial s}=C_H(\ff s r)^{H-\ff 1 2}(s-r)^{H-\ff 3 2}$,
it is readily checked that for each $\psi\in\H, R_H\psi$ is absolutely continuous and
\begin{align}\label{RFRH}
(R_H\psi)(t)=\int_0^t\left(\int_0^s\ff {\partial K_H}{\partial s}(s,r)(K_H^*\psi)(r)\d r\right)\d s.
\end{align}

\subsection{The Lions derivative}

For any $\th\in[1,+\infty)$, define the $L^\th$-Wasserstein distance on $\sP_\th(\R^d)$ as follows
\begin{align*}
W_\th(\mu,\nu):=\inf_{\pi\in\sC(\mu,\nu)}\left(\int_{\R^d\times\R^d}|x-y|^\th\pi(\d x, \d y)\right)^\ff 1 \th,\ \ \mu,\nu\in\sP_\th(\R^d).
\end{align*}
Here $\sC(\mu,\nu)$ is the set of all probability measures on $\R^d\times\R^d$ with marginals $\mu$ and $\nu$.
It is well-known that $(\sP_\th(\R^d),W_\th)$ is a Polish space, usually referred to as the $\th$-Wasserstein space on $\R^d$.
Throughout this paper, we use $|\cdot|$ and $\<\cdot,\cdot\>$  for the Euclidean norm and inner product on $\R^d$, respectively,
and for a matrix, we denote by $\|\cdot\|$ the operator norm.
$\|\cdot\|_{L^2_\mu}$ denotes for the norm of the space $ L^2(\R^d\ra\R^d,\mu)$ and for a random variable $X$, $\sL_X$ denotes its distribution.

\beg{defn}
Let $f:\sP_2(\R^d)\ra\R$ and $g:\R^d\times\sP_2(\R^d)\ra\R$.
\begin{enumerate}
\item[(1)]  $f$ is called $L$-differentiable at $\mu\in\sP_2(\R^d)$, if the functional
\begin{align*}
L^2(\R^d\ra\R^d,\mu)\ni\phi\mapsto f(\mu\circ(\mathrm{Id}+\phi)^{-1}))
\end{align*}
is Fr\'{e}chet differentiable at $0\in L^2(\R^d\ra\R^d,\mu)$. That is, there exists a unique $\gamma\in L^2(\R^d\ra\R^d,\mu)$ such that
\begin{align*}
\lim_{\|\phi\|_{L^2_\mu}\ra0}\ff{f(\mu\circ(\mathrm{Id}+\phi)^{-1})-f(\mu)-\mu(\<\gamma,\phi\>)}{\|\phi\|_{L^2_\mu}}=0.
\end{align*}
In this case, $\gamma$ is called the $L$-derivative of $f$ at $\mu$ and denoted by $D^Lf(\mu)$.

\item[(2)] $f$ is called $L$-differentiable on $\sP_2(\R^d)$, if the $L$-derivative $D^Lf(\mu)$ exists for all $\mu\in\sP_2(\R^d)$.
Furthermore, if for every $\mu\in\sP_2(\R^d)$ there exists a $\mu$-version $D^Lf(\mu)(\cdot)$ such that $D^Lf(\mu)(x)$ is jointly continuous in $(\mu,x)\in\sP_2(\R^d)\times\R^d$, we denote $f\in C^{(1,0)}(\sP_2(\R^d))$.

\item[(3)] $g$ is called differentiable on $\R^d\times\sP_2(\R^d)$, if for any $(x,\mu)\in\R^d\times\sP_2(\R^d)$,
$g(\cdot,\mu)$ is differentiable and $g(x, \cdot)$ is $L$-differentiable.
If $\nabla g(\cdot,\mu)(x)$ and $D^Lg(x,\cdot)(\mu)(y)$ are jointly continuous in $(x,y,\mu)\in\R^d\times\R^d\times\sP_2(\R^d)$,
we denote $g\in C^{1,(1,0)}(\R^d\times\sP_2(\R^d))$.
If moreover the derivatives
$$\na^2g(\cdot,\mu)(x),\  \na(D^Lg(\cdot,\cdot)(\mu)(y))(x),\ \na(D^Lg(x,\cdot)(\mu)(\cdot))(y),\ D^L(D^Lg(x,\cdot)(\cdot)(y))(\mu)(z),$$
exist and are jointly continuous in the corresponding arguments $(x,\mu), (x,\mu,y)$ or $(x,\mu,y,z)$,
we denote $g\in C^{2,(2,0)}(\R^d\times\sP_2(\R^d))$.
If $g\in C^{2,(2,0)}(\R^d\times\sP_2(\R^d))$ and with all these derivatives is bounded on $\R^d\times\sP_2(\R^d)$,
we denote $g\in C_b^{2,(2,0)}(\R^d\times\sP_2(\R^d))$.
\end{enumerate}
\end{defn}
In order to ease notations, we denote $D^{L,2}g=D^L(D^Lg)$, and for a vector-valued function $f=(f_i)$ or a matrix-valued function $f=(f_{ij})$ with $L$-differentiable components, we simply write
\begin{align*}
D^Lf(\mu)=(D^Lf_i(\mu))  \ \  \mathrm{or}\ \ D^Lf(\mu)=(D^Lf_{ij}(\mu)).
\end{align*}
Let us finish this part by giving a useful formula for the $L$-derivative that are needed later on, which is due to \cite[Theorem 6.5]{Cardaliaguet13} and \cite[Proposition 3.1]{RW}.
\beg{lem}\label{FoLD}
Let $(\Omega,\sF,\P)$ be an atomless probability space and $X,Y\in L^2(\Omega\ra\R^d,\P)$.
If $f\in C^{(1,0)}(\sP_2(\R^d))$, then
\beg{align*}
\lim_{\ve\da0}\ff {f(\sL_{X+\ve Y})-f(\sL_X)} \ve=\E\<D^Lf(\sL_X)(X),Y\>.
\end{align*}
\end{lem}

\section{Well-posedness of DDSDE with multiplicative fractional noise}

The main objective of this section concerns the problem of well-posedness of DDSDEs with multiplicative fractional noises.
We first formulate our main result.
In the second part of this section, we will introduce a new H\"{o}lder space $\sP_{2,\be}(W_T^d)$ of probability measure paths,
and then prove that $\sP_{2,\be}(W_T^d)$ is a complete metric space under some metric $\W_{2,T,\be}$,
which plays a key role in the proof of the main result that is addressed in the third part.

\subsection{Main result}

Let us recall that the focus here is a DDSDE with multiplicative fractional noise of the form:
\begin{align}\label{EquM}
\d X_t=b(X_t,\sL_{X_t})\d t+\si(X_t,\sL_{X_t})\d B_t^H,
\end{align}
where the coefficients $b:\R^d\times\sP_2(\R^d)\rightarrow\R^d, \si:\R^d\times\sP_2(\R^d)\rightarrow\R^d\otimes\R^d$,
and $B^H$ is a $d$-dimensional fractional Brownian motion with $H\in(1/2,1)$.

\beg{defn}
We say that $\{X_t\}_{t\in[0,T]}$ is a solution of equation \eqref{EquM}, if $\{X_t\}_{t\in[0,T]}$ is an adapted process such that for any $\be\in(0,H)$, $\P$-a.s. $X\in C^{\be}([0,T];\R^d)$,
\beg{equation}\label{in-mom}
\E\left(\|X\|_{T,\infty}^2+\|X\|_{T,\be}^2\right)<+\infty,
\end{equation}
and $X_t$ satisfies equation \eqref{EquM}.
\end{defn}

Before going on, we first discuss the well-posedness of the stochastic integral which is defined by the fractional integration by parts formula \eqref{Fibpf}:
for any $\be\in (\ff 1 2,H)$ and $\al\in(1-\be,\be)$,
\[\int_s^t\si(X_r,\sL_{X_r})\d B^H_r=(-1)^{\al}\int_s^tD_{s+}^\al(\si(X_\cdot,\sL_{X_\cdot}))(r)D_{t-}^{1-\al}B_{t-}^H(r)\d r.\]
If $\sigma$ is imposed on some regularity condition (for instance, $\si_{ij}\in  C_b^{2,(2,0)}(\R^d\times\sP_2(\R^d)), 1\leq i,j\leq d$),
due to the definition of the fractional derivative, to make sense the stochastic integral term,  the path $t\mapsto\sL_{X_t}$ should be  H\"older continuous under some metric of probability measures.  Noticing that
\beg{equation}\label{in-WEH}
\sup_{0\leq s<t\leq T}\ff {W_2(\sL_{X_t},\sL_{X_s})} {(t-s)^{\be}}\leq \sup_{0\leq s<t\leq T}\ff {\sqrt{\E|X_t-X_s|^2}} {(t-s)^{\be}}\leq \sq{\E\|X\|_{T,\be}^2},
\end{equation}
we find that for a solution of equation \eqref{EquM}, $\int_s^t\si(X_r,\sL_{X_r})\d B_r^H$ is well-defined. When dealing with the uniqueness of solutions to equation \eqref{EquM}, we come across the following integral for two solutions $X_t,Y_t$:
\begin{equation}\label{int-si-si}
\beg{split}
&\int_s^t(\si(X_r,\sL_{X_r})-\si(Y_r,\sL_{Y_r}))\d B^H_r\\
&=(-1)^{\al}\int_s^tD_{s+}^\al(\si(X_\cdot,\sL_{X_\cdot})-\si(Y_\cdot,\sL_{Y_\cdot}))(r)D_{t-}^{1-\al}B_{t-}^H(r)\d r\\
&=\frac{(-1)^{\al}}{\Gamma(1-\alpha)}\int_s^t
\bigg[\bigg(\frac{\sigma(X_r ,\sL_{X_r})-\sigma(Y_r,\sL_{Y_r})}{(r - s)^\alpha}\cr
&\quad\quad+\alpha\int_s^r\frac{\sigma(X_r,\sL_{X_r})-\sigma(Y_r,\sL_{Y_r})-\left(\sigma(X_u,\sL_{X_u})-\sigma(Y_u,\sL_{Y_u})\right)}
{(r - u)^{1+\alpha}}\d u\bigg)\bigg]D_{t-}^{1-\al}B_{t-}^H(r)\d r.
\end{split}
\end{equation}
For the random variable part of the right hand side in the last equality, it involves the random variable $X_r-Y_r-(X_u-Y_u)$, which can be controlled by the H\"older norm on $C^\beta([0,T];\R ^d)$. While for the distribution part, we need a metric to measure the term ``$\sL_{X_r}-\sL_{Y_r}-(\sL_{X_u}-\sL_{Y_u})$". As \eqref{in-WEH}, $\E\|X-Y\|_{T,\be}^2$ can be used to control the distribution part, which may be related to a Wasserstein distance on $C^\beta([0,T];\R^d)$:
\[\inf_{\pi\in\mathscr{C}(\sL_{X},\sL_Y)}\int_{C^{\be}([0,T];\R^d)\times C^{\be}([0,T];\R^d)}\|\ga_1-\ga_2\|_{\be,T}^2\pi(\d \ga_1,\d\ga_2),\]
where  $\mathscr{C}(\sL_{X},\sL_Y)$ consists of all coupling of $\sL_{X},\sL_Y$ as distributions on $C^\beta([0,T];\R ^d)$. It is a pity that $C^\beta([0,T];\R ^d)$ is not a separable space under the H\"older norm, i.e. $C^\beta([0,T];\R ^d)$ is not a Polish space under the H\"older norm. In the following subsection, we introduce  a metric $\W_{2,T,\be}(\mu,\nu)$ (see \eqref{W2beT} below) for probability measure paths $\{\mu_t\}_{t\in[0,T]}$ and $\{\nu_t\}_{t\in [0,T]}$ to control the distribution part in the stochastic integral \eqref{int-si-si} (with a slight abuse
of notations $\mu,\nu$).

By using the metric $\W_{2,T,\be}$, we can establish the following well-posedness result for equation \eqref{EquM} under the hypotheses:
\beg{enumerate}
\item[\textsc{\textbf{(H)}}]
There exists a constant $K_b>0$ such that
\[|b(x,\mu)-b(y,\nu)|\leq K_b(|x-y|+W_2(\mu,\nu)),\ \ x,y\in\R^d,\mu,\nu\in\sP_2(\R^d),\]
and $\si_{ij}\in  C_b^{2,(2,0)}(\R^d\times\sP_2(\R^d)), 1\leq i,j\leq d$.
\end{enumerate}


\beg{thm}\label{ExU}
Let $H\in (\ff {\sq 5-1} 2,1)$ and $T>0$. Assume that {\bf (H)} holds and $\E e^{|X_0|^{\ff {2(1-H)} {H}+\ep_0}}<+\infty$ for some constant $\ep_0>0$.
Then equation \eqref{EquM} is well-posedness on $[0,T]$.
\end{thm}

\subsection{H\"older space of probability measure paths}

Let $T>0$ be fixed in this part. To simplify the notation, we set $W_T^d:=C([0,T];\R^d)$, and let $\sP_2(W_T^d)$ be all probability measures $\mu$ on $W_T^d$ such that $\mu(\|\cdot\|_{T,\infty}^2)<+\infty$,
in which the distance is defined as
\begin{align}\label{AddHO1}
\W_{2,T}(\mu,\nu)=\inf_{\pi\in \sC(\mu,\nu)}\left(\int_{W_T^d\times W_T^d}\|\ga_1-\ga_2\|_{T,\infty}^2\pi(\d \ga_1,\d \ga_2)\right)^\ff 1 2.
\end{align}

As mentioned in the previous part, the above metric space $(\sP_2(W_T^d),\W_{2,T})$ is not enough to overcome the negative effects induced by the multiplicative fractional noise,
so we need to introduce a new metric space.
To this end, for $\mu\in\sP_2(W_T^d)$ and $0\leq s<t\leq T$, let $\mu_s$ and $\mu_{s,t}$ stand for the respective marginals of $\mu$ at $s$ and $(s,t)$,
and let $\mu_{s,t}^\Delta$ be the distribution of the following random variable on $(\R^{2d},\sB(\R^{2d}),\mu_{s,t})$:
\[\R^d\times\R^d\ni(x,y)\mapsto x-y\in\R^d.\]
Now, for fixed $\be\in(0,1)$, we define
\begin{align}\label{AddHO3}
\sP_{2,\be}(W_T^d):=\left\{\mu\in \sP_2(W_T^d)~\Big|~\sup_{0\leq s<t\leq T}\ff {\sq{\mu_{s,t}^\Delta (|\cdot|^2)}} {(t-s)^\be}<+\infty\right\}.
\end{align}


Next, we focus on providing a distance on the set $\sP_{2,\be}(W_T^d)$.
For any $x=(x_1,x_2),y=(y_1,y_2)\in\R^{2d}$, we first let
\beg{align*}
|x-y|_M=|x_1-y_1|\vee |x_2-y_2|,
\end{align*}
and for any $\mu,\nu\in\sP_2(W_T^d)$ and any $0\leq s_1<s_2\leq T$, define
\beg{align*}
\W_{2}(\mu_{s_1,s_2},\nu_{s_1,s_2}):=\inf_{\pi_{s_1,s_2}\in\sC(\mu_{s_1,s_2},\nu_{s_1,s_2})}\left(\int_{\R^{2d}\times\R^{2d}} |x-y|_M^2\pi_{s_1,s_2}(\d x ,\d y)\right)^\ff 1 2.
\end{align*}

Let $\mu,\nu\in\sP_2(W_T^d)$. For $0\leq s_1<s_2\leq T$, and a coupling $\pi_{s_1,s_2}\in \sC(\mu_{s_1,s_2},\nu_{s_1,s_2})$, let $\pi_{s_1}$ and $\pi_{s_2}$  be the marginal distributions of $\pi_{s_1,s_2}$ at $s_1$ and $s_2$ respectively. Then $\pi_{s_i}\in\sC(\mu_{s_i},\nu_{s_i})$, $i=1,2$. Denote by $\sC_{opt}(\mu_{s_1,s_2},\nu_{s_1,s_2})$ the optimal couplings of $(\mu_{s_1,s_2},\nu_{s_1,s_2})$ with respect to the above $\W_2$-distance, 
and we define for any $\mu,\nu\in\sP_2(W_T^d)$,
\begin{align}\label{AddHO2}
W_{2,s_1,s_2}^c(\mu,\nu)=\inf\left\{\sq{\pi_{s_1,s_2}(c)}~\Big|~\pi_{s_1,s_2}\in \sC_{opt}(\mu_{s_1,s_2},\nu_{s_1,s_2})\right\},
\end{align}
where
$\pi_{s_1,s_2}(c):=\int_{\R^{2d}\times\R^{2d}} c(x,y)\pi_{s_1,s_2}(\d x ,\d y)$ with $c$ giving by
\[c(x,y)=|x_1-y_1-(x_2-y_2)|^2, \ \ x=(x_1,x_2),y=(y_1,y_2)\in\R^{2d}.\]
It is obvious that for any $\mu,\nu\in\sP_2(W_T^d)$, we have
\[W^c_{2,s_1,s_2}(\mu,\nu)=W^c_{2,s_1,s_2}(\nu,\mu)\geq 0.\]
In addition, if $\mu=\nu$, put
\[\pi_{s_1,s_2}(\d x_1,\d x_2,\d y_1,\d y_2):=\mu_{s_1,s_2}(\d x_1,\d x_2)\de_{(x_1,x_2)}(\d y_1,\d y_2).\]
It is easy to see that $\pi_{s_1,s_2}$ above belongs to $\sC_{opt}(\mu_{s_1,s_2},\nu_{s_1,s_2})$, and then by a straightforward calculation, we know that $W^c_{2,s_1,s_2}(\mu,\mu)=0$ holds true. We give a remark on couplings in $\sC(\mu_{s_1,s_2},\nu_{s_1,s_2})$. Let
\[\mu_{s_1,s_2}(\d x_1,\d x_2)=F_{\mu}(x_1,\d x_2)\mu_{s_1}(\d x_1),\quad \nu_{s_1,s_2}(\d y_1,\d y_2)=F_{\nu}(y_1,\d y_2)\nu_{s_1}(\d y_1),\]
where $F_{\mu}$ and $F_{\nu}$ are transition probability measure of $\mu_{s_1,s_2}$ and $\nu_{s_1,s_2}$ respectively. Then for any coupling $\tilde\pi_{s_1}\in \sC(\mu_{s_1},\nu_{s_1})$, we find that
\[\tilde\pi_{s_1,s_2}(\d x_1,\d x_2,\d y_1,\d y_2):=F_{\mu}(x_1,\d x_2)F_{\nu}(y_1,\d y_2)\tilde\pi_{s_1}(\d x_1,\d y_1)\in \sC(\mu_{s_1,s_2},\nu_{s_1,s_2}).\]
Then the marginal $\tilde\pi_{s_2}$ at $s_2$ of $\tilde\pi_{s_1,s_2}$ has been fixed. For  $\tilde\pi_{s_1}\in \sC(\mu_{s_1},\nu_{s_1})$ and $\tilde\pi_{s_2}\in \sC(\mu_{s_2},\nu_{s_2})$, there may be $\sC(\tilde\pi_{s_1},\tilde\pi_{s_2})\cap \sC(\mu_{s_1,s_2},\nu_{s_1,s_2})=\emptyset$.

The following lemma shows that  $W^c_{2,s_1,s_2}$ is a degenerate metric.

\begin{lem}\label{lem:metric1}
Let $\mu,\nu\in \sP_2(W_T^d)$.
Then for any $0\leq s_1<s_2\leq T$, there is a $\tilde{\pi}_{s_1,s_2}\in\sC_{opt}(\mu_{s_1,s_2},\nu_{s_1,s_2})$ such that
\[W^c_{2,s_1,s_2}(\mu,\nu)=\sq{\tilde{\pi}_{s_1,s_2}(c)},\]
and $W^c_{2,s_1,s_2}$ satisfies the triangle inequality.
\end{lem}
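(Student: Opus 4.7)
My plan is to use a standard compactness and lower-semicontinuity argument. Since $\mu,\nu\in\sP_2(W_T^d)$, the marginals $\mu_{s_1,s_2}$ and $\nu_{s_1,s_2}$ lie in $\sP_2(\R^{2d})$, so the set $\sC(\mu_{s_1,s_2},\nu_{s_1,s_2})$ is tight (by a second-moment Chebyshev bound on the marginals) and weakly closed (the marginal projections are continuous), hence weakly compact by Prokhorov's theorem. The functional $\pi\mapsto\int|x-y|_M^2\,\d\pi$ is weakly lower semicontinuous because its integrand is nonnegative and continuous, so it attains its minimum on $\sC(\mu_{s_1,s_2},\nu_{s_1,s_2})$; the set of minimizers $\sC_{opt}(\mu_{s_1,s_2},\nu_{s_1,s_2})$ is therefore nonempty, weakly closed and weakly compact. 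Since $c(x,y)\leq 4|x-y|_M^2$, the functional $\pi\mapsto\pi(c)$ is finite and weakly lower semicontinuous on $\sC_{opt}$, and so attains its infimum there at the desired $\tilde\pi_{s_1,s_2}$.

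\textbf{Triangle inequality via gluing.} Given $\mu,\nu,\rho\in\sP_2(W_T^d)$, select $c$-minimizers $\tilde\pi^{(1)}\in\sC_{opt}(\mu_{s_1,s_2},\nu_{s_1,s_2})$ and $\tilde\pi^{(2)}\in\sC_{opt}(\nu_{s_1,s_2},\rho_{s_1,s_2})$ from the first step. Disintegrate them against their first marginals as $\tilde\pi^{(1)}(\d x,\d y)=\mu_{s_1,s_2}(\d x)K_1(x,\d y)$ and $\tilde\pi^{(2)}(\d y,\d z)=\nu_{s_1,s_2}(\d y)K_2(y,\d z)$, and form the gluing $\Pi(\d x,\d y,\d z)=\mu_{s_1,s_2}(\d x)K_1(x,\d y)K_2(y,\d z)$ on $\R^{2d}\times\R^{2d}\times\R^{2d}$; by construction its $(x,y)$- and $(y,z)$-marginals are $\tilde\pi^{(1)}$ and $\tilde\pi^{(2)}$ respectively. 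Let $\pi^{(3)}$ denote its $(x,z)$-marginal, so $\pi^{(3)}\in\sC(\mu_{s_1,s_2},\rho_{s_1,s_2})$. The pointwise identity $(x_1-z_1)-(x_2-z_2)=[(x_1-y_1)-(x_2-y_2)]+[(y_1-z_1)-(y_2-z_2)]$ combined with Minkowski's inequality in $L^2(\Pi;\R^d)$ yields
\[\sq{\pi^{(3)}(c)}\leq\sq{\tilde\pi^{(1)}(c)}+\sq{\tilde\pi^{(2)}(c)}=W^c_{2,s_1,s_2}(\mu,\nu)+W^c_{2,s_1,s_2}(\nu,\rho).\]

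\textbf{Main obstacle.} To conclude $W^c_{2,s_1,s_2}(\mu,\rho)\leq\sq{\pi^{(3)}(c)}$ one needs $\pi^{(3)}\in\sC_{opt}(\mu_{s_1,s_2},\rho_{s_1,s_2})$, and this optimality step is the delicate part. In general, gluing two $\W_2$-optimal couplings does not produce a $\W_2$-optimal coupling, especially for the non-strictly-convex cost $|\cdot|_M^2$ which admits multiple optimal couplings. My plan is to handle this either by verifying $|\cdot|_M^2$-cyclical monotonicity of the support of $\pi^{(3)}$ (inheriting it from the supports of $\tilde\pi^{(1)}$ and $\tilde\pi^{(2)}$ together with the structure of the gluing), or by invoking Kantorovich duality to build admissible dual potentials for $(\mu,\rho)$ from those for $(\mu,\nu)$ and $(\nu,\rho)$ that certify $\pi^{(3)}$ as optimal. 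A fall-back strategy is to select via Part 1 a $c$-minimizing $\pi^*\in\sC_{opt}(\mu_{s_1,s_2},\rho_{s_1,s_2})$ and compare $\pi^*(c)$ to $\pi^{(3)}(c)$ via a local mass-redistribution argument. This selection/optimality step is the main technical hurdle I anticipate.
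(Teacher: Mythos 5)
Your existence argument is the paper's own, step for step: weak compactness of $\sC(\mu_{s_1,s_2},\nu_{s_1,s_2})$, weak closedness (hence compactness) of $\sC_{opt}(\mu_{s_1,s_2},\nu_{s_1,s_2})$ via lower semicontinuity of $\pi\mapsto\int|x-y|_M^2\,\d\pi$, then a minimizing sequence for $\pi\mapsto\pi(c)$ inside $\sC_{opt}$ combined with lower semicontinuity of $c$. That part is complete and correct. Your gluing-plus-Minkowski scheme for the triangle inequality is also exactly the paper's: the paper glues the two $c$-minimizers along the common middle marginal (phrased with random variables and Villani's Gluing Lemma rather than disintegrations) and applies Minkowski in $L^2(\P)$ to the very identity you write down.

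The issue is the final line. The paper simply asserts $W^c_{2,s_1,s_2}(\mu,\nu)\le\sq{\E|X_1-Y_1-(X_2-Y_2)|^2}$ for the glued pair $(X,Y)$, i.e.\ it bounds the infimum defining $W^c_{2,s_1,s_2}$ by the $c$-value of the glued coupling without verifying that this coupling lies in $\sC_{opt}(\mu_{s_1,s_2},\nu_{s_1,s_2})$ --- which is precisely the obstacle you identify and leave unresolved. You are right that, with $W^c_{2,s_1,s_2}$ read literally as an infimum over $\sC_{opt}$ only, this step requires justification: gluing two $|\cdot|_M^2$-optimal couplings need not produce an $|\cdot|_M^2$-optimal coupling, and since $|\cdot|_M^2$ is not strictly convex none of your three proposed repairs (cyclical monotonicity of the glued support, duality, mass redistribution) is routine, and none is carried out. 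So, judged as a proof, your proposal establishes the first assertion of the lemma but not the second; it reproduces the paper's route and stops one line short, at a step the paper's own proof performs without comment. Be aware also that the obvious escape --- redefining $W^c_{2,s_1,s_2}$ as an infimum over all of $\sC(\mu_{s_1,s_2},\nu_{s_1,s_2})$, which would make the gluing argument close immediately --- is not free, because the restriction to $\sC_{opt}$ is exploited later (in the proof of Lemma \ref{EsNoi} the chosen coupling must simultaneously control $\W_2(\mu_{u,r},\nu_{u,r})$ and the $c$-cost); so a complete argument really does have to confront the optimality of the glued coupling or reorganize the definition.
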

\beg{proof}
First note that, by \cite[Lemma 4.4 and the proof of Theorem 4.1]{Vil} we obtain that for any $\mu,\nu\in \sP_2(W_T^d)$ and $0\leq s_1<s_2\leq T$, $\sC(\mu_{s_1,s_2},\nu_{s_1,s_2})$ is a weakly compact set.
The next observation is that $\sC_{opt}(\mu_{s_1,s_2},\nu_{s_1,s_2})$ is weakly closed. As a consequence, it is in fact weakly compact.
Indeed, let $\{\hat{\pi}_{s_1,s_2}^k\}_{k\geq1}$ be a sequence of $\sC_{opt}(\mu_{s_1,s_2},\nu_{s_1,s_2})$ which converges weakly to some $\hat{\pi}_{s_1,s_2} \in \sC(\mu_{s_1,s_2},\nu_{s_1,s_2})$,
we derive from \cite[Lemma 4.3 (with $h\equiv 0$)]{Vil} that
\[\int_{\R^{2d}\times\R^{2d}}|x-y|_M^2\hat{\pi}_{s_1,s_2} (\d x,\d y)\leq\liminf_{k\ra+\infty}\int_{\R^{2d}\times\R^{2d}}|x-y|_{M}^2\hat{\pi}_{s_1,s_2}^{k}(\d x,\d y)=\W_2(\mu_{s_1,s_2},\nu_{s_1,s_2})^2.\]
Then, we have
\[\int_{\R^{2d}\times\R^{2d}}|x-y|_M^2\hat{\pi}_{s_1,s_2} (\d x,\d y)=\W_2(\mu_{s_1,s_2},\nu_{s_1,s_2})^2,\]
which means that $\hat{\pi}_{s_1,s_2}$ is optimal, i.e.  $\hat{\pi}_{s_1,s_2}\in\sC_{opt}(\mu_{s_1,s_2},\nu_{s_1,s_2})$.
Hence,  $\sC_{opt}(\mu_{s_1,s_2},\nu_{s_1,s_2})$ is weakly compact.

Now, for a sequence $\{\tilde{\pi}_{s_1,s_2}^k\}_{k\geq1}$ in $\sC_{opt}(\mu_{s_1,s_2},\nu_{s_1,s_2})$ such that
\[\lim_{k\ra +\infty}\tilde{\pi}_{s_1,s_2}^k(c)=W^c_{2,s_1,s_2}(\mu,\nu)^2,\]
extracting  a subsequence if necessary, we may assume that $\tilde{\pi}_{s_1,s_2}^k$ converges weakly to some $\tilde{\pi}_{s_1,s_2}\in\sC_{opt}(\mu_{s_1,s_2},\nu_{s_1,s_2})$.
Due to the lower semicontinuity of $c$ and the fact that $c\geq 0$, we deduce from \cite[Lemma 4.3 (with $h\equiv 0$)]{Vil} again that
\[\tilde{\pi}_{s_1,s_2}(c)\leq\liminf_{k\ra +\infty}\pi_{s_1,s_2}^k(c)=W^c_{2,s_1,s_2}(\mu,\nu)^2.\]
Thus $\tilde{\pi}_{s_1,s_2}$ is minimizing, i.e. $\tilde{\pi}_{s_1,s_2}(c)=W^c_{2,s_1,s_2}(\mu,\nu)^2$.

Next, we want to  prove that $W^c_{2,s_1,s_2}$ satisfies the triangle inequality.
For this, let $\mu,\nu,\chi\in \sP_2(W_T^d)$,
and let random variables $(\hat X,\hat Z)$ and $(\hat Y,\tilde Z)$ be respectively couplings of $(\mu,\chi)$ and $(\nu,\chi)$  (namely $\sL_{\hat X}=\mu,\sL_{\hat Z}=\chi=\sL_{\tilde Z},\sL_{\hat Y}=\nu$)
such that their laws are respectively couplings of $(\mu_{s_1,s_2},\chi_{s_1,s_2})$ and $(\nu_{s_1,s_2},\chi_{s_1,s_2})$  and
\beg{align}\label{1Pflem:metric1}
\E \big|\hat X_1-\hat Z_1-(\hat X_2-\hat Z_2)\big|^2=W_{2,s_1,s_2}^c(\mu,\chi),\ \ \
\E \big|\hat Y_1-\tilde Z_1-(\hat Y_2-\tilde Z_2)\big|^2=W_{2,s_1,s_2}^c(\nu,\chi).
\end{align}
Here for $i=1,2$,  $\hat X_i,\hat Y_i,\hat Z_i,\tilde Z_i$ are components of $\hat X,\hat Y,\hat Z,\tilde Z$ satisfying $\sL_{\hat X_i}=\mu_{s_i},\sL_{\hat Y_i}=\nu_{s_i},\sL_{\hat Z_i}=\sL_{\tilde Z_i}=\chi_{s_i}$, respectively.
It follows from \cite[Gluing lemma, Page 11-12]{Vil} that there is a triple of random variables $(X,Y,Z)$ such that $\sL_{(X,Z)}=\sL_{(\hat X,\hat Z)}$ and
$\sL_{(Y, Z)}=\sL_{(\hat Y,\tilde Z)}$.
Using the Minkowski inequality and \eqref{1Pflem:metric1}, we then conclude that
\beg{align*}
W^c_{2,s_1,s_2}(\mu,\nu)
&\leq\sq{\E |X_1-Y_1-(X_2-Y_2)|^2}\cr
&\leq \sq{\E |X_1-Z_1-(X_2-Z_2)|^2}+\sq{\E |Y_1-  Z_1-(Y_2- Z_2)|^2}\\
&=\sq{\E\big|\hat X_1-\hat Z_1-(\hat X_2-\hat Z_2)\big|^2}+\sq{\E\big|\hat Y_1-\tilde Z_1-(\hat Y_2-\tilde Z_2)\big|^2}\\
&=W_{2,s_1,s_2}^c(\mu,\chi)+W_{2,s_1,s_2}^c(\nu,\chi).
\end{align*}
where for $i=1,2$,  $X_i,Y_i,Z_i$ are components of $X,Y,Z$ satisfying $\sL_{X_i}=\mu_{s_i},\sL_{Y_i}=\nu_{s_i},\sL_{Z_i}=\chi_{s_i}$, respectively.
Therefore, the triangle inequality holds for $W^c_{2,s_1,s_2}$.
This completes the proof.
\end{proof}

\beg{lem}\label{cor:Wc-0}
(1) For any $\mu\in\sP_2(W_T^d)$ and $0\leq s_1<s_2\leq T$, we have
\[W_{2,s_1,s_2}^c(\mu,\de_{\bf 0})=\sq{\mu_{s_1,s_2}^\Delta(|\cdot|^2)}\leq \|\mu_{s_1}\|_2+\|\mu_{s_2}\|_2,\]
where and in what follows,  $\de_{\bf 0}$ denotes the Dirac measure at $\bf 0\in\R^d$.\\
(2) For any $\mu,\nu\in \sP_2(W_T^d)$, we have
\[ \sup_{0\leq s_1<s_2\leq T}W_{2,s_1,s_2}^c(\mu,\nu)\leq 2\sup_{0\leq s_1<s_2\leq T}\W_{2}(\mu_{s_1,s_2},\nu_{s_1,s_2})\leq 2\W_{2,T}(\mu,\nu).\]
\end{lem}

\beg{proof}
(1) For any $\mu\in\sP_2(W_T^d)$ and $0\leq s_1<s_2\leq T$, let
\[\pi_{s_1,s_2}(\d x_1,\d x_2,\d y_1,\d y_2)=\mu_{s_1,s_2}(\d x_1,\d x_2)\de_{(\bf 0,\bf 0)}(\d y_1,\d y_2).\]
Then, it is readily checked that $\pi_{s_1,s_2}\in\sC_{opt}(\mu_{s_1,s_2},\de_{(\bf 0,\bf 0)})$, and moreover we have
\beg{align*}
W_{2,s_1,s_2}^c(\mu,\de_{\bf 0})&\leq\sq{\pi_{s_1,s_2}(c)}
=\sq{\int_{\R^d\times\R^d}|x_1-x_2|^2\mu_{s_1,s_2}(\d x_1,\d x_2)}\\
&=\sq{\mu_{s_1,s_2}^\Delta(|\cdot|^2)}
\leq \|\mu_{s_1}\|_2+\|\mu_{s_2}\|_2.
\end{align*}

(2) For any $\mu,\nu\in \sP_{2 }(W_T^d)$ and $0\leq s_1<s_2\leq T$, there is
\beg{equation}\label{sc-sc}
\sC(\mu_{s_1,s_2},\nu_{s_1,s_2})\supset\{\pi_{s_1,s_2} ~|~\pi \in\sC(\mu,\nu)\}.
\end{equation}
Thus  $\W_{2,T}(\mu,\nu)\geq \W_2(\mu_{s_1,s_2},\nu_{s_1,s_2})$ holds true.
On the other hand, owing to
\[c(x,y)\leq 4 |x-y|_M^2,\ \ x,y\in\R^{2d},\]
we derive that
\[ W_{2,s_1,s_2}^c(\mu,\nu)\leq 2\W_2(\mu_{s_1,s_2},\nu_{s_1,s_2}).\]
The result follows.
\end{proof}

Let
\beg{equation}\label{W2beT}
\W_{2,T,\be}(\mu,\nu)=\W_{2,T}(\mu,\nu)+\sup_{0\leq s_1<s_2\leq T}\ff {W_{2,s_1,s_2}^c(\mu,\nu)} {(s_2-s_1)^\be},\ \ \mu,\nu\in\sP_{2,\be}(W_T^d).
\end{equation}
Then we have the following result.
\begin{thm}\label{ThProb}
The space $\sP_{2,\be}(W_T^d)$ is a complete metric space under the metric $\W_{2,T,\be}$.
\end{thm}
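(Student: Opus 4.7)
The argument splits naturally into verifying that $\W_{2,T,\be}$ is a metric and then proving completeness. For the metric axioms, non-negativity, symmetry and the triangle inequality for the $\W_{2,T}$ summand are classical; the same properties for the H\"older summand follow from Lemma \ref{lem:metric1} upon taking the supremum over $s_1<s_2$. Identity of indiscernibles is inherited from the $\W_{2,T}$ component, and the H\"older piece is finite on $\sP_{2,\be}(W_T^d)\times\sP_{2,\be}(W_T^d)$ since by the triangle inequality and Lemma \ref{cor:Wc-0}(1),
\[W^c_{2,s_1,s_2}(\mu,\nu)\le W^c_{2,s_1,s_2}(\mu,\de_{\bf 0})+W^c_{2,s_1,s_2}(\nu,\de_{\bf 0})=\sq{\mu_{s_1,s_2}^\Delta(|\cdot|^2)}+\sq{\nu_{s_1,s_2}^\Delta(|\cdot|^2)}.\]

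For completeness, start with a Cauchy sequence $\{\mu^n\}_{n\ge 1}\subset\sP_{2,\be}(W_T^d)$. Because $\W_{2,T}\le\W_{2,T,\be}$ the sequence is Cauchy in the complete space $(\sP_2(W_T^d),\W_{2,T})$, producing a limit $\mu\in\sP_2(W_T^d)$ with $\W_{2,T}(\mu^n,\mu)\to 0$. I would first verify that $\mu\in\sP_{2,\be}(W_T^d)$: a Cauchy sequence is bounded in $\W_{2,T,\be}$ and Lemma \ref{cor:Wc-0}(1) then supplies a constant $M<+\infty$ with
\[\sup_{n}\sup_{0\le s_1<s_2\le T}\frac{\sq{(\mu^n)_{s_1,s_2}^\Delta(|\cdot|^2)}}{(s_2-s_1)^\be}\le M.\]
Since the evaluations $\ga\mapsto(\ga(s_1),\ga(s_2))$ are continuous from $W_T^d$ to $\R^{2d}$ and $\W_{2,T}$-convergence supplies $\|\cdot\|_{T,\infty}^2$-uniform integrability, one gets $(\mu^n)_{s_1,s_2}\to\mu_{s_1,s_2}$ in $\W_2$ on $\R^{2d}$. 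The $\sq 2$-Lipschitz projection $(x_1,x_2)\mapsto x_1-x_2$ then yields $(\mu^n)_{s_1,s_2}^\Delta\to\mu_{s_1,s_2}^\Delta$ in $\W_2$ on $\R^d$, hence convergence of the second moments, and the bound above passes to $\mu$.

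The heart of the argument is to prove $\W_{2,T,\be}(\mu^n,\mu)\to 0$, for which I would establish the lower semicontinuity
\[W^c_{2,s_1,s_2}(\mu^n,\mu)\le\liminf_{m\to\infty} W^c_{2,s_1,s_2}(\mu^n,\mu^m)\]
at each fixed $n$ and pair $s_1<s_2$. To do so, pick via Lemma \ref{lem:metric1} optimal plans $\pi^m\in\sC_{opt}(\mu^n_{s_1,s_2},\mu^m_{s_1,s_2})$ attaining $W^c_{2,s_1,s_2}(\mu^n,\mu^m)$. Since $\mu^m_{s_1,s_2}\to\mu_{s_1,s_2}$ in $\W_2$, the standard stability of optimal couplings (cf. \cite{Vil}) ensures that a subsequence of $\pi^m$ converges weakly to some $\pi\in\sC_{opt}(\mu^n_{s_1,s_2},\mu_{s_1,s_2})$, and since $c$ is continuous and nonnegative, \cite[Lemma 4.3]{Vil} gives $\pi(c)\le\liminf_m\pi^m(c)$. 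The Cauchy condition then says that for any $\ep>0$ there is $N$ with $\sup_{s_1<s_2} W^c_{2,s_1,s_2}(\mu^n,\mu^m)/(s_2-s_1)^\be<\ep$ for $n,m\ge N$. Using the elementary inequality $\sup_i\liminf_m a_{m,i}\le\liminf_m\sup_i a_{m,i}$ one obtains, for $n\ge N$,
\[\sup_{0\le s_1<s_2\le T}\frac{W^c_{2,s_1,s_2}(\mu^n,\mu)}{(s_2-s_1)^\be}\le\liminf_{m\to\infty}\sup_{0\le s_1<s_2\le T}\frac{W^c_{2,s_1,s_2}(\mu^n,\mu^m)}{(s_2-s_1)^\be}\le\ep,\]
and combined with $\W_{2,T}(\mu^n,\mu)\to 0$ this gives $\W_{2,T,\be}(\mu^n,\mu)\to 0$.

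The main difficulty is precisely the lower semicontinuity step: $W^c_{2,s_1,s_2}$ is an infimum over the restricted class $\sC_{opt}$ of $\W_2$-optimal couplings rather than over all couplings, so one needs to know that weak limits of $\W_2$-optimal couplings remain optimal. This relies on the stability of optimal transport under $\W_2$-convergence of both marginals, enabled by the $\W_2$-convergence of the time-marginals $(\mu^m)_{s_1,s_2}\to\mu_{s_1,s_2}$ established earlier.
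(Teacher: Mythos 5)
Your proof is correct, and the metric axioms and the membership $\mu\in\sP_{2,\be}(W_T^d)$ are handled essentially as in the paper (the paper gets the marginal convergence $(\mu^m)_{s_1,s_2}\to\mu_{s_1,s_2}$ from the inclusion \eqref{sc-sc} rather than via the Lipschitz projection, but this is cosmetic). Where you genuinely diverge is at the step you correctly identify as the heart of the matter. You prove only the one-sided bound $W^c_{2,s_1,s_2}(\mu^n,\mu)\le\liminf_m W^c_{2,s_1,s_2}(\mu^n,\mu^m)$, and to do so you must invoke the stability of $\W_2$-optimal plans under weak convergence of the marginals (so that limits of elements of $\sC_{opt}$ stay in $\sC_{opt}$) together with tightness and lower semicontinuity of $c$ — a heavier package, though one whose ingredients (weak compactness of $\sC_{opt}$, \cite[Lemma 4.3]{Vil}) already appear inside Lemma \ref{lem:metric1}. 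The paper instead combines the triangle inequality for $W^c_{2,s_1,s_2}$ (Lemma \ref{lem:metric1}) with the domination $W^c_{2,s_1,s_2}(\mu^m,\mu)\le 2\W_{2,T}(\mu^m,\mu)$ (Lemma \ref{cor:Wc-0}(2)) to get
\[\left|W^c_{2,s_1,s_2}(\mu^n,\mu^m)-W^c_{2,s_1,s_2}(\mu^n,\mu)\right|\le W^c_{2,s_1,s_2}(\mu^m,\mu)\le 2\W_{2,T}(\mu^m,\mu)\longrightarrow 0,\]
i.e.\ genuine convergence of $W^c_{2,s_1,s_2}(\mu^n,\mu^m)$ to $W^c_{2,s_1,s_2}(\mu^n,\mu)$ as $m\to\infty$, uniformly in $(s_1,s_2)$ and with no optimal-transport stability needed; the weighted supremum is then handled by the same $\sup$--$\limsup$ exchange you use. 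Your route buys nothing extra here and costs an appeal to a deeper theorem, but it is sound; the paper's route is the more economical one and is the reason Lemma \ref{cor:Wc-0}(2) was isolated in the first place.
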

\begin{proof}
We divide the proof into two steps.

\emph{Step 1. Claim: $\W_{2,T,\be}$ is a metric on the space $\sP_{2,\be}(W_T^d)$.}
According to Lemma \ref{lem:metric1}, it is sufficient to show that $\W_{2,T,\be}$ is finite on the space $\sP_{2,\be}(W_T^d)$.
Applying Lemma \ref{cor:Wc-0}(1) and the triangle inequality, we obtain that for any $\mu,\nu\in\sP_{2,\be}(W_T^d)$,
\beg{align*}
&\sup_{0\leq s_1<s_2\leq T}\ff {W_{2,s_1,s_2}^c(\mu,\nu)} {(s_2-s_1)^\be}\\
\leq& \sup_{0\leq s_1<s_2\leq T}\ff {W_{2,s_1,s_2}^c(\mu,\de_{\bf 0})} {(s_2-s_1)^\be}+\sup_{0\leq s_1<s_2\leq T}\ff {W_{2,s_1,s_2}^c(\nu,\de_{\bf 0})} {(s_2-s_1)^\be}\\
=& \sup_{0\leq s_1<s_2\leq T}\ff {\sq{\mu_{s_1,s_2}^\Delta(|\cdot|^2)}} {(s_2-s_1)^\be}+\sup_{0\leq s_1<s_2\leq T}\ff {\sq{\nu_{s_1,s_2}^\Delta(|\cdot|^2)}} {(s_2-s_1)^\be}\\
<&+\infty,
\end{align*}
which, along with \eqref{AddHO1}, yields the desired assertion.

\emph{Step 2. Claim: The metric space $(\sP_{2,\be}(W_T^d),\W_{2,T,\be})$ is complete.}
Suppose that $\{\mu^m\}_{m\geq 1}$ is a Cauchy sequence in the metric space $(\sP_{2,\be}(W_T^d),\W_{2,T,\be})$.
In view of the definition of $\W_{2,T,\be}$, $\{\mu^m\}_{m\geq 1}$ is also a Cauchy sequence in the metric space $(\sP_{2 }(W_T^d),\W_{2,T})$.
Then, there is a $\mu\in \sP_2(W_T^d)$ such that
\beg{align}\label{conv-W2T}
\lim_{m\ra+\infty}\W_{2,T}(\mu^m,\mu)=0.
\end{align}
Consequently, it follows from Lemma \ref{cor:Wc-0}(2) that
\beg{align}\label{Wc-mum-mu}
\lim_{m\ra+\infty} \sup_{0\leq s_1<s_2\leq T}W_{2,s_1,s_2}^c(\mu^m,\mu)=0,
\end{align}
which, together with the triangle inequality, implies
\beg{align}\label{1PfCoME}
\lim_{m\ra +\infty}W_{2,s_1,s_2}^c(\mu^m,\de_{\bf 0})=W_{2,s_1,s_2}^c(\mu,\de_{\bf 0}),\ \ 0\leq s_1<s_2\leq T.
\end{align}
Combining this with Lemma \ref{cor:Wc-0}(1), we derive that
\begin{align*}
\sup_{0\leq s_1<s_2\leq T}\ff {\sq{\mu_{s_1,s_2}^\Delta(|\cdot|^2)}} {(s_2-s_1)^\be}
&= \sup_{0\leq s_1<s_2\leq T}\ff {W_{2,s_1,s_2}^c(\mu,\de_{\bf 0})} {(s_2-s_1)^\be}\\
&= \sup_{0\leq s_1<s_2\leq T}\left(\lim_{m\ra+\infty} \ff {W_{2,s_1,s_2}^c(\mu^m,\de_{\bf 0})} {(s_2-s_1)^{\be}}\right)\\
&\leq  \sup_{0\leq s_1<s_2\leq T}\left(\sup_{m\geq 1}\ff {W_{2,s_1,s_2}^c(\mu^m,\de_{\bf 0})} {(s_2-s_1)^\be}\right)\\
&=\sup_{m\geq 1}\left(\sup_{0\leq s_1<s_2\leq T}\ff {W_{2,s_1,s_2}^c(\mu^m,\de_{\bf 0})} {(s_2-s_1)^\be}\right)\\
&<+\infty,
\end{align*}
where in the last inequality we have used that the Cauchy sequence is bounded, which can be derived from  the triangle inequality of $W_{2,s_1,s_2}^c$. This means that $\mu\in \sP_{2,\be}(W_T^d)$.

Next, we intend to show that $\{\mu^m\}_{m\geq 1}$ converges to $\mu$ in the metric space $(\sP_{2,\be}(W_T^d),\W_{2,T,\be})$.
Since $\{\mu^m\}_{m\geq 1}$ is a Cauchy sequence in $(\sP_{2,\be}(W_T^d),\W_{2,T,\be})$,
we deduce that for every $\ep>0$, there exists $N\in\N$ such that
\beg{align}\label{2PfCoME}
\W_{2,T,\be}(\mu^m,\mu^n)<\ep,\ \ n,m\geq N.
\end{align}
As in \eqref{1PfCoME}, the triangle inequality and \eqref{Wc-mum-mu} imply that
\begin{align*}
\lim_{n\ra +\infty} W_{2,s_1,s_2}^c(\mu^m,\mu^n)=W_{2,s_1,s_2}^c(\mu^m,\mu).
\end{align*}
This, along with \eqref{2PfCoME}, leads to
\begin{align*}
\sup_{0\leq s_1<s_2\leq T}\ff {W_{2,s_1,s_2}^c(\mu^m,\mu)} {(s_2-s_1)^\be}&=\sup_{0\leq s_1<s_2\leq T}\lim_{n\ra +\infty} \ff {W_{2,s_1,s_2}^c(\mu^m,\mu^n)} {(s_2-s_1)^\be}\cr
&\leq\limsup_{n\ra +\infty}\sup_{0\leq s_1<s_2\leq T}\ff {W_{2,s_1,s_2}^c(\mu^m,\mu^n)} {(s_2-s_1)^\be}\cr
&\leq\limsup_{n\ra +\infty}\W_{2,T,\be}(\mu^m,\mu^n)\leq \ep,\ \ m\geq N.
\end{align*}
As a consequence, we have
\[\lim_{m\ra+\infty} \sup_{0\leq s_1<s_2\leq T}\ff {W_{2,s_1,s_2}^c(\mu^m,\mu)} {(s_2-s_1)^\be}=0.\]
In conjunction with \eqref{conv-W2T}, this yields that $\{\mu^m\}_{m\geq 1}$ converges to $\mu$ in $(\sP_{2,\be}(W_T^d),\W_{2,T,\be})$,
namely $(\sP_{2,\be}(W_T^d),\W_{2,T,\be})$ is complete.
The proof is now finished.
\end{proof}

Let us mention that Theorem \ref{ThProb} can extend naturally to the subinterval of $[0,T]$.
That is, for each $S\in[0,T)$, let $W_{S,T}^d=C([S,T];\R^d)$, and $\W_{2,S,T}, \W_{2,S,T,\be}$ be defined on $\sP_{2,\be}(W_{S,T}^d)$ similarly
to \eqref{AddHO1} and \eqref{W2beT}, then $(\sP_{2,\be}(W_{S,T}^d),\W_{2,S,T,\be})$ is a complete metric space.
Finally,
let
\begin{align}\label{AddDef}
\|\mu\|_{2,S,T,\be}&=\sq{\mu(\|\cdot\|_{S,T,\infty}^2)}+\sup_{S\leq s<t\leq T}\ff {\sq{\mu_{s,t}^\Delta(|\cdot|^2)}} {(t-s)^\be},
\end{align}
and we simply write $\|\mu\|_{2,T,\be}$ as $\|\mu\|_{2,S,T,\be}$ if $S=0$. We remark that $\|\mu\|_{2,S,T,\be}=\W_{2,S,T,\be}(\mu,\de_{\bf 0})$.

\subsection{Proof of Theorem \ref{ExU}}

In this part, we will use the contraction mapping principle to prove the existence and uniqueness of solution to equation \eqref{EquM}.
To this end, we first investigate the following equation with the distribution part being frozen on $[S,T]$:
\begin{align}\label{X-mu}
\d X^\mu_t=b(X^\mu_t,\mu_t)\d t+\si(X_t^\mu,\mu_t)\d B_t^H,\ \ X^\mu_S=X_S,~ S\leq t\leq T,
\end{align}
where $S\in[0,T)$.
If $\mu_S=\sL_{X_S}\in\sP_2(\R^d)$ and the equation above is well-posed for any $\mu\in\sP_{2,\be}(W_{S,T}^d)$,  then there is a mapping $\sP_{2,\be}(W_{S,T}^d)\ni\mu\mapsto\sL_{X^\mu}$.
Now, we define for $M>0$,
\beg{align*}
 \sP_{M,\sL_{X_S},S,T,\be}=\left\{\mu\in \sP_{2,\be}(W^{d}_{S,T})~\big|~ \|\mu\|_{2,S,T,\be}\leq M(1+\|\mu_S\|_2),\ \ \mu_S=\sL_{X_S}\right\},
\end{align*}
which is a closed subset of $\sP_{2,\be}(W^{d}_{S,T})$.
Below we will prove that $\sP_{M,\sL_{X_S},S,T,\be}$ is invariant for the mapping $\mu\mapsto\sL_{X^\mu}$.

\begin{prp}\label{lem-inva}
Assume that {\bf (H)} holds. Let $0\leq S<T$ and $\ff 1 {2}<\be<H$. Then, there are positive constants $R_0$ and $M_0$, which are independent of $S,T,\sL_{X_S}$, such that for any $T-S\leq R_0$ and $M\geq M_0$, the set $\sP_{M, \sL_{X_S},S,T,\be}$ is invariant for the mapping $\mu\mapsto \sL_{X^\mu}$, that is, $\mu\in\sP_{M,\sL_{X_S},S,T,\be}$ implies $\sL_{X^\mu}\in\sP_{M,\sL_{X_S},S,T,\be}$.
\end{prp}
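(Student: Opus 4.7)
For fixed $\mu\in\sP_{M,\sL_{X_S},S,T,\be}$, the time-dependent coefficients $b(\cdot,\mu_t),\sigma(\cdot,\mu_t)$ of \eqref{X-mu} inherit $\be$-H\"older regularity in $t$ via
\[
W_2(\mu_r,\mu_u)\leq\sq{\mu_{u,r}^\Delta(|\cdot|^2)}\leq \|\mu\|_{2,S,T,\be}(r-u)^\be,
\]
so under \textbf{(H)} this is a classical Young-type SDE driven by $B^H$ for which Nualart--Rascanu-type theory furnishes a unique pathwise $X^\mu\in C^\be([S,T];\R^d)$. By \eqref{AddDef}, the inclusion $\sL_{X^\mu}\in\sP_{M,\sL_{X_S},S,T,\be}$ reduces to showing
\[
\sq{\E\|X^\mu\|_{S,T,\infty}^2}+\sup_{S\leq s<t\leq T}\ff{\sq{\E|X^\mu_t-X^\mu_s|^2}}{(t-s)^\be}\leq M(1+\|\sL_{X_S}\|_2),
\]
for suitable $M_0,R_0$ independent of $S,T,\sL_{X_S}$.

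\textbf{Core pathwise estimate.} Fixing $\al\in(1-\be,\be)$, I would apply the fractional integration-by-parts formula \eqref{Fibpf} to $\int_s^t\sigma(X^\mu_r,\mu_r)\d B^H_r$ and the Weyl representation \eqref{FrDe} to $D_{s+}^\al\sigma(X^\mu_\cdot,\mu_\cdot)$. Boundedness of $\sigma$ (from $\sigma_{ij}\in C_b^{2,(2,0)}$) controls the $(r-s)^{-\al}$ term, while the Lipschitz property of $\sigma$ in $(x,\mu)$, combined with the pathwise bounds $|X^\mu_r-X^\mu_u|\leq \|X^\mu\|_{S,T,\be}(r-u)^\be$ and $W_2(\mu_r,\mu_u)\leq\|\mu\|_{2,S,T,\be}(r-u)^\be$, controls the difference quotient. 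Together with $|D_{t-}^{1-\al}B^H_{t-}(r)|\leq C\|B^H\|_{S,T,\be}(t-r)^{\al+\be-1}$ and standard Beta-integral identities, this yields
\[
\Bigl|\int_s^t\sigma(X^\mu_r,\mu_r)\d B^H_r\Bigr|\leq C\|B^H\|_{S,T,\be}(t-s)^\be\bigl(1+(\|X^\mu\|_{S,T,\be}+\|\mu\|_{2,S,T,\be})(t-s)^\be\bigr).
\]
Adding the Lipschitz drift bound $|\int_s^t b(X^\mu_r,\mu_r)\d r|\leq K_b(t-s)(1+\|X^\mu\|_{S,T,\infty}+\sup_r\|\mu_r\|_2)$ and running a Nualart--Rascanu-type subinterval bootstrap on $[S,T]$ (partitioning so that $C(t-s)^\be\|B^H\|_{S,T,\be}<1/2$ on each subinterval and absorbing the $\|X^\mu\|_{S,T,\be}$-term into the left-hand side) produces the pathwise bound
\[
\|X^\mu\|_{S,T,\infty}+\|X^\mu\|_{S,T,\be}\leq C\bigl(1+|X_S|+\|\mu\|_{2,S,T,\be}\bigr)\exp\bigl(C\|B^H\|_{S,T,\be}^{1/\be}\bigr).
\]

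\textbf{Moment bounds, closure, and main obstacle.} Self-similarity gives $\|B^H\|_{S,T,\be}\stackrel{d}{=}(T-S)^{H-\be}\|B^H\|_{0,1,\be}$; because $\be>1/2$ and hence $1/\be<2$, Fernique's theorem then supplies all exponential moments of $\|B^H\|_{S,T,\be}^{1/\be}$ with a prefactor vanishing as $T-S\to 0$. The hypothesis $\E e^{|X_0|^{2(1-H)/H+\ep_0}}<\infty$ delivers the moments of $|X_S|$ needed to decouple, via H\"older's inequality with exponents balanced against the Fernique threshold $1/\be<2$, the factor $(1+|X_S|)^2$ from $\exp(2C\|B^H\|_{S,T,\be}^{1/\be})$; the restriction $H>(\sq 5-1)/2$ enforces $(1-H)/H<H$ and keeps the admissible window $\be\in(\max\{1/2,1-H\},H)$ wide enough to tune these exponents. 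Combining with $\|\mu\|_{2,S,T,\be}\leq M(1+\|\sL_{X_S}\|_2)$ produces
\[
\sq{\E\|X^\mu\|_{S,T,\infty}^2}+\sup_{s<t}\ff{\sq{\E|X^\mu_t-X^\mu_s|^2}}{(t-s)^\be}\leq C_0(1+\|\sL_{X_S}\|_2)+C_1(T-S)^\ga M(1+\|\sL_{X_S}\|_2),
\]
for some $\ga>0$. Fixing $M_0=2C_0$ and then $R_0$ with $C_1R_0^\ga\leq 1/2$ closes the invariance, uniformly in $S,T,\sL_{X_S}$. The principal obstacle is the core pathwise estimate: the multiplicative structure of $\sigma$ inevitably introduces $\|X^\mu\|_{S,T,\be}$ on the right-hand side of the fractional-calculus bound, forcing the subinterval bootstrap and the resulting $\exp(C\|B^H\|^{1/\be})$ factor, and reconciling this factor with finite $L^2(\Omega)$-moments—while simultaneously tracking the H\"older semi-norm rather than only the sup-norm—is exactly the delicate bookkeeping that pins down the ranges $H\in((\sq 5-1)/2,1)$, $\be>1/2$ and the shape of the exponential moment hypothesis on $|X_0|$.
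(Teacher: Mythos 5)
Your skeleton (freeze $\mu$, fractional-calculus pathwise estimate, subinterval bootstrap, take $L^2(\Omega)$ moments, then tune $R_0,M_0$) matches the paper's, but the pathwise estimate you aim for is too crude to close the argument, and this is a genuine gap. Your bootstrap delivers $\|X^\mu\|_{S,T,\infty}+\|X^\mu\|_{S,T,\be}\leq C\bigl(1+|X_S|+\|\mu\|_{2,S,T,\be}\bigr)\exp\bigl(C\|B^H\|_{S,T,\be}^{1/\be}\bigr)$, so squaring and taking expectations forces you to handle $\E\bigl[(1+|X_S|)^2\exp(2C\|B^H\|_{S,T,\be}^{1/\be})\bigr]$. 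Since fBM does not have independent increments, $\|B^H\|_{S,T,\be}$ is not independent of $\sF_S$, so you must decouple by H\"older or Young; but that produces moments of $|X_S|$ strictly higher than the second, whereas the conclusion of Proposition \ref{lem-inva} must read $\|\sL_{X^\mu}\|_{2,S,T,\be}\leq M(1+\|\mu_S\|_2)$ with $\|\mu_S\|_2=(\E|X_S|^2)^{1/2}$ and with $R_0,M_0$ independent of $\sL_{X_S}$. Worse, the exponential moment hypothesis on $X_0$ that you invoke for this decoupling is \emph{not} an assumption of Proposition \ref{lem-inva} (it enters only in Theorem \ref{ExU} and the last part of Corollary \ref{pri-est-X}, where it is used for the contraction step, not for invariance). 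So your route proves a weaker statement under stronger hypotheses and breaks the recursion over subintervals $[kR_1,(k+1)R_1]$ that the well-posedness proof relies on.

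The paper avoids this through the sharper bookkeeping of Lemma \ref{mom-est}: the partition step $\Delta$ in \eqref{ad-De} is random (small when $\|B^H\|_{S,T,\be}$ is large), but the per-step growth factor acting on the initial value is $1+\varsigma$ with $\varsigma=2C\Delta$ and $C$ deterministic, so the product over the $[\frac{T-S}{\Delta}]+1$ steps telescopes to the \emph{deterministic} bound $\exp\{2C(T-S)+1\wedge(T-S)^{\be}\}$ in \eqref{Fan-add3}--\eqref{ad-supn}. The randomness enters only polynomially, through the number of steps $G(T-S,K_{\tilde b},K_{\tilde\si},B^H)$ multiplying the inhomogeneous term, and $G$ has all moments bounded linearly in $K_{\tilde b}+K_{\tilde\si}$ with a prefactor that is a positive power of $T-S$ (see \eqref{ad-EH1}). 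Feeding in $K_{\tilde b}=K_b\|\mu\|_{2,S,T,\be}$, $K_{\tilde\si}=\|D^L\si\|_\infty\|\mu\|_{2,S,T,\be}$ then yields exactly $\|\sL_{X^\mu}\|_{2,S,T,\be}\leq C_1(T-S)(1+(K_b+\|D^L\si\|_\infty)\|\mu\|_{2,S,T,\be})+C_2(T-S)\|\mu_S\|_2$ with $C_1\ra 0$, $C_2\ra 1$, which is what makes the choice of $R_0$ and $M_0$ independent of $S,T,\sL_{X_S}$ possible. Without this refinement — i.e., without separating the deterministic exponential acting on $|X_S|$ from the polynomially-integrable random factor acting on the constants — your final display cannot be reached.
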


\begin{proof}
Owing to {\bf (H)}, we obtain that for any $r,s\in[S,T], x,y\in\R^d$ and $\mu\in\sP_{2,\be}(W^{d}_{S,T})$,
\beg{align*}
|b(x,\mu_r)|&\leq |b(x,\mu_r)-b({\bf 0}, {\bf{\de_{0 }}})|+|b({\bf 0},{\bf {\de_{0 }}})|\\
&\leq  K_b|x|+K_b W_2(\mu_r,{\bf\de_0})+|b({\bf 0},{\bf\de_0})|\\
&=  K_b|x|+K_b\|\mu_r\|_2+|b({\bf 0},{\bf\de_0})|\\
&\leq  K_b|x|+K_b\|\mu\|_{2,S,T,\be}+|b({\bf 0},{\bf\de_0})|\\
&\leq K_b\|\mu\|_{2,S,T,\be}+(K_b\vee|b({\bf 0},{\bf\de_0})| )(1+|x|)
\end{align*}
and
\begin{align*}
|\si(x,\mu_r)-\si(y,\mu_s)|&\leq   \|\na\si\|_\infty|x-y|+ \|D^L\si\|_\infty W_2(\mu_r,\mu_s)\\
&\leq  \|\na\si\|_\infty|x-y|+\|D^L\si\|_\infty\|\mu\|_{2,S,T,\be}(r-s)^{\be}.
\end{align*}
Then invoking Lemma \ref{mom-est} with
\[K_{\tilde{b}}=K_b\|\mu\|_{2,S,T,\be},\quad L_{\tilde{b}}=K_b\vee|b({\bf 0},{\bf\de_0})|,\quad K_{\tilde{\sigma}}=\|D^L\si\|_\infty\|\mu\|_{2,S,T,\be},\quad L_{\tilde{\si}}=\|\na\si\|_\infty,\quad \ga_0=\be,\]
we derive that for each $\mu\in\sP_{M,\mu_S,S,T,\be}$,
\begin{align}\label{in-sLX-X}
&\|\sL_{X^\mu}\|_{2,S,T,\be}=\sqrt{\E\|X^\mu\|^2_{S,T,\infty}}+\sup_{S\leq s<t\leq T}\ff {\sqrt{\E|X^\mu_t-X^\mu_s|^2}} {(t-s)^{\be}}\cr
&\leq \sqrt{\E\|X^\mu\|^2_{S,T,\infty}}+\sq{\E\|X^\mu\|_{S,T,\be}^2}\cr
&\leq C_1(T-S,H,\be)\left(1+ (K_b+\|D^L\si\|_\infty)\|\mu\|_{2,S,T,\be}\right)+ C_2(T-S,H,\be)\left(\E|X_S|^2 \right)^{\ff 1 2}\cr
&\leq C_1(T-S,H,\be)(1+ (K_b+\|D^L\si\|_\infty)M\left(1+\|\mu_S\|_2\right)) +C_2(T-S,H,\be)\|\mu_S\|_2\cr
&\leq (K_b+\|D^L\si\|_\infty)M\left(1+\|\mu_S\|_2\right) +\left(C_1(T-S,H,\be)+C_2(T,H,\be)\|\mu_S\|_2\right)\cr
&\leq \left(C_1(T-S,H,\be)(K_b+\|D^L\si\|_\infty)M \right.\cr
&\quad\left. +C_1(T-S,H,\be)\vee C_2(T-S,H,\be)\right)\left(1+\|\mu_S\|_2\right),
\end{align}
where $C_i(T-S,H,\be),i=1,2$ are two positive constants, which are independent of $S,T,\sL_{X_S}$ and  decreasing as $T-S$ decreases and satisfy
\begin{align}\label{1Pflem-inva}
\lim_{T-S\rightarrow 0^+}C_1(T-S,H,\be)=0\ \ \ \mathrm{and}\ \ \ \lim_{T-S\rightarrow 0^+}C_2(T-S,H,\be)=1.
\end{align}

Now, let
\begin{align*}
R_0=\inf\left\{R>0~|~C_1(R,H,\be) (K_b+\|D^L\si\|_\infty)\geq \ff 1 2\right\}.
\end{align*}
Then, it is easy to show that $R_0>0$ holds true because of \eqref{1Pflem-inva}.
For any $T-S\leq R_0$, we set
\begin{align*}
M_0:=2\left(C_1(R_0,H,\be)\vee C_2(R_0,H,\be)\right).
\end{align*}
By \eqref{in-sLX-X}, we know that for any $T-S\leq R_0$ and $M\geq M_0$,
\[\|\sL_{X^\mu}\|_{2,S,T,\be}\leq \left(\ff M 2+\ff {M_0} 2\right)\left(1+\|\mu_S\|_2\right)\leq M\left(1+\|\mu_S\|_2\right).\]
Thus, the set $\sP_{M,\sL_{X_S},S,T,\be}$ is invariant for the mapping $\mu\mapsto \sL_{X^\mu}$.
This completes the proof.
\end{proof}
With Proposition \ref{lem-inva} in hand, we can show that for a solution $X$ to equation \eqref{EquM}, the law $\|\sL_{X}\|_{2,S,T,\be}$ can be controlled by $\|\sL_{X_S}\|_2$ as in probability measure paths in $\sP_{M,\sL_{X_S},S,T,\be}$.
\beg{cor}\label{pri-est-X}
Assume that {\bf (H)} holds. Let $0\leq S<T$ and $\be\in (\ff 1 2,H)$.
If $\{X_t\}_{t\in [S,T]}$ is a solution of equation \eqref{EquM}, then there is a constant $R_0>0$, which is independent of $\sL_{X_S},S,T$ and the same as $R_0$ in Proposition \ref{lem-inva}, such that for any $T-S\leq R_0$,
\beg{equation}\label{pri-XX0}
\|\sL_{X}\|_{2,S,T,\be}\leq 2(C_1(R_0,H,\be)\vee C_2(R_0,H,\be))(1+\|\sL_{X_S}\|_2).
\end{equation}
Consequently,  there is a constant $C(T,H,\be)>0$ such that
\beg{equation}\label{XTbe-L0}
\|\sL_{X}\|_{2,T,\be}\leq C(T,H,\be)(1+\|\sL_{X_0}\|_2).
\end{equation}
Moreover, if $H>\ff {\sq 5-1} 2$ and there is $\ep_0>0$ such that $\E e^{|X_0|^{\ff {2(1-H)} {H}+\ep_0}}<+\infty$, then for any $\ep_0'\in (0,\ep_0\wedge \ff {2(H^2+H-1)} {H})$  and $\beta\in(\ff 1 2\vee(\ff {1-H} {H}+\ff {\ep_0'} 2),H)$, there is
\beg{equation}\label{exp+ep}
\mathcal{E}_{T,\be,X_0,\ep'_0}:=\sup_{t\in [0,T]}\E e^{|X_t|^{\ff {2(1-H)} {H}+\ep_0'}}<+\infty.
\end{equation}
\end{cor}
\beg{proof}
By {\bf (H)}, we may set in Lemma \ref{mom-est} with
\[K_{\tilde{b}}=K_b\|\sL_X\|_{2,S,T,\be},\quad L_{\tilde{b}}=K_b\vee|b({\bf 0},{\bf\de_0})|,\quad K_{\tilde{\sigma}}=\|D^L\si\|_\infty\|\sL_X\|_{2,S,T,\be},\quad L_{\tilde{\si}}=\|\na\si\|_\infty,\quad \ga_0=\be.\]
Then, along the same lines as in \eqref{in-sLX-X} with $\mu$ replaced by $\sL_{X}$, we obtain that
\beg{align}\label{in-sLX-X0}
\|\sL_{X}\|_{2,S,T,\be}\leq C_1(T-S,H,\be)\left[1+\left(K_b+\|D^L\si\|_\infty\right)\|\sL_{X}\|_{2,S,T,\be}\right]+C_2(T-S,H,\be)\|\sL_{X_S}\|_2,
\end{align}
where $C_i(T-S,H,\be),i=1,2$ are two positive constants, which are decreasing as $T-S$ decreases and satisfy
\beg{align*}
\lim_{T-S\ra 0^+}C_1(T-S,H,\be)=0\ \ \ \mathrm{and}\ \ \ \lim_{T-S\ra 0^+}C_2(T-S,H,\be)=1.
\end{align*}

Next, let
$$R_0=\inf\left\{R>0~\big|~C_1(R,H,\be)\left(K_b+\|D^L\si\|_\infty\right)\geq \ff 1 2\right\}.$$
Then by \eqref{in-sLX-X0}, we derive that there holds \eqref{pri-XX0} for any $T-S\leq R_0$.
Observe that $R_0$ is independent of $\sL_{X_S},S,T$.
Hence, \eqref{XTbe-L0} can be proved by using \eqref{pri-XX0} recursively.

For the last assertion, we see from \eqref{ad-supn} that
\beg{align}\label{1Pf-priesX}
|X_t|&\leq \exp\left\{ 2CT+1\wedge T^{\be }\right\}  |X_0|+ (1\wedge T^{\be })e^{2CT} \left(1+G(T,K_{\tilde{b}},K_{\tilde{\sigma}},B^H)\right)
\end{align}
where $G(T,K_{\tilde{b}},K_{\tilde{\sigma}},B^H)$ is defined in \eqref{Fan-add4} with
\[K_{\tilde{b}}=K_b\|\sL_X\|_{2,T,\be},\quad \quad K_{\tilde{\sigma}}=\|D^L\si\|_\infty\|\sL_X\|_{2,T,\be},\quad \quad \ga_0=\be.\]
By \eqref{XTbe-L0}, there is a constant $C(T,H,\be)\geq 1$ such that
\beg{align*}
G(T,K_{\tilde{b}},K_{\tilde{\sigma}},B^H)\leq C(T,H,\be)\left(\|B^H\|_{T,\be}^{\ff 1 {\be}}+(1+\|\sL_{X_0}\|_{2})\left(1+\|B^H\|_{T,\be}^{\ff 1 {2\be}}\right)\right).
\end{align*}
For $H>\ff {\sq 5-1} 2$ and $0<\ep<\ff {2(H^2+H-1)} {H}$, there is  $\ff {2(1-H)} {H}+\ep<2H$, which implies  that for any $\be> \ff 1 2\vee(\ff {1-H} {H}+\ff {\ep} 2)$,
\[\ff {\ff {2(1-H)} {H}+\ep} {2\be}<\ff {\ff {2(1-H)} {H}+\ep} {\be}<2.\]
Then,  according to the Fernique theorem (see, e.g., \cite[Theorem 1.3.2]{Fernique75a} or \cite[Lemma 8]{Saussereau12}), we obtain that for any $\ep\in (0,\ff {2(H^2+H-1)} {H})$ and $\beta\in(\ff 1 2\vee(\ff {1-H} {H}+\ff {\ep} 2),H)$ and any $c>0$,
\beg{align}\label{2Pf-priesX}
\E e^{c(G(T,K_{\tilde{b}},K_{\tilde{\sigma}},B^H))^{\ff {2(1-H)} {H}+\ep}}<+\infty.
\end{align}
Hence, for any $0<\ep'_0<\ep_0\wedge \ff {2(H^2+H-1)} {H}$, taking into account that
\[\E\exp\{c|X_0|^{\ff {2(1-H)} {H}+\ep_0'}\}<+\infty,\ \ c>0,\]
we conclude from \eqref{1Pf-priesX}, \eqref{2Pf-priesX} and the Young inequality that for any $\beta\in(\ff 1 2\vee(\ff {1-H} {H}+\ff {\ep_0'} 2),H)$, there holds \eqref{exp+ep}.
The proof is complete.
\end{proof}

The following lemma gives an estimate for the difference between two stochastic integrals with different frozen distribution parts $\mu$ and $\nu$,
which will play an important role in the proof of Proposition \ref{LeContr} below.

\begin{lem}\label{EsNoi}
Let $\be\in (\ff 1 2,H)$, $\be_1\in [\be,H)$ and $1-\be<\al<\be$.
Assume that  $\mu,\nu\in\sP_{2,\be}(W_{S,T}^d)$ and $\si_{ij}\in  C_b^{2,(2,0)}(\R^d\times\sP_2(\R^d)), 1\leq i,j\leq d$.
Let $X^\mu$ and $X^\nu$ be respectively the solutions of equation \eqref{X-mu} with the frozen distribution parts $\mu$ and $\nu$.
Then for any $S\leq s<t\leq T$ with $t-s\leq 1$,
\begin{align*}
&\left|\int_s^t\left(\si(X_r^\mu,\mu_r)-\si(X_r^{\nu},\nu_r)\right)\d B^H_r\right|\cr
\le&\|B^H\|_{s,t,\be_1} \left(\Lambda_1 (t-s)^{\be_1}+\Lambda_2  (t-s)^{\al+\be_1}\right)\|X^\mu-X^\nu\|_{s,t,\infty}\cr
&+\Lambda_3 \|B^H\|_{s,t,\be_1} (t-s)^{\be+\be_1}\|X^\mu-X^\nu\|_{s,t,\be}\cr
&+\|B^H\|_{s,t,\be_1} \left(\Lambda_4 (t-s)^{\be_1}+\Lambda_5 (t-s)^{\al+\be_1}\right)\W_{2,S,T,\be}(\mu ,\nu),
\end{align*}
where
\begin{equation}\label{1-EsNoi}
\beg{split}
&\Lambda_1 := \ff {C_0\mathcal{B}(1-\al,\al+\be_1)} {\Gamma(1-\al)} \|\na\si\|_\infty, \\
&\Lambda_2 := \ff {2^{3-\ff {\al} {\be}} \be C_0\|\na\si\|_\infty^{\ff {\be-\al} {\be}}} {(\be-\al)(\al+\be_1)\Gamma(1-\al)}\Big[\left( \|\na^2\si\|_\infty \left( {\|X^\nu\|_{s,t,\be}\vee\|X^\mu\|_{s,t,\be}} \right)\right)^{\ff {\al} {\be}}\cr
&\qquad +\left( \|D^L\na\si\|_\infty \left(\W_{2,S,T,\be}(\mu,\de_{\bf 0})\wedge \W_{2,S,T,\be}(\nu,\de_{\bf 0})\right)\right)^{\ff {\al} {\be}} \Big], \\
&\Lambda_3 :=\ff {\al C_0\|\na\si\|_\infty\mathcal{B}(\be-\al+1,\al+\be_1)} {(\be-\al)\Gamma(1-\al)}, \\
&\Lambda_4 :=\ff {C_0\|D^L \si\|_\infty } {\Gamma(1-\al)} \left( \mathcal{B}(1-\al,\al+\be_1)+\ff {\al  \mathcal{B}(\be-\al+1,\be_1+\al)} {\be-\al} \right), \cr
&\Lambda_5 :=\ff {2^{3-\ff {\al} {\be}} \be C_0 \|D^L\si\|_\infty^{\ff {\be-\al} {\be}}} {(\be-\al)(\be_1+\al)\Gamma(1-\al)} \Big[ (\|\na_1 D^L\si\|_\infty (\|X^\nu\|_{s,t,\be}\wedge\|X^\mu\|_{s,t,\be}))^{\ff {\al} {\be}}\cr
&\qquad + \left((\|D^{L,2}\si\|_\infty+\|\na_2 D^L\si\|_\infty)( \W_{2,S,T,\be}(\mu,\de_{\bf 0})\vee\W_{2,S,T,\be}(\nu,\de_{\bf 0})) \right)^{\ff {\al}{\be}}  \Big],
\end{split}
\end{equation}
in which $C_0:=\ff{\be_1}{\Gamma(\alpha)(\alpha+\beta_1 -1)}$, $\na_1 D^L\si$ and $\na_2 D^L\si$ stand for the gradient operators of $D^L\si(x,\cdot)(\mu)(y)$ on $x$ and $y$, respectively.
\end{lem}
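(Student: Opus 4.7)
The natural starting point is the fractional integration by parts formula \eqref{Fibpf} applied with $F(r):=\si(X^\mu_r,\mu_r)-\si(X^\nu_r,\nu_r)$, which rewrites the stochastic integral as $(-1)^\al\int_s^t D_{s+}^\al F(r)\,D_{t-}^{1-\al}B^H_{t-}(r)\,\d r$. Using Weyl's representation for $D_{t-}^{1-\al}B^H_{t-}$ together with the H\"older regularity $|B^H_t-B^H_r|\le\|B^H\|_{s,t,\be_1}(t-r)^{\be_1}$, I will derive the pointwise bound $|D_{t-}^{1-\al}B^H_{t-}(r)|\le C_0\|B^H\|_{s,t,\be_1}(t-r)^{\al+\be_1-1}$ with $C_0=\be_1/[\Gamma(\al)(\al+\be_1-1)]$. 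What remains is to estimate $D_{s+}^\al F(r)$, which by the Weyl formula \eqref{FrDe} splits into a \emph{value part} $F(r)/(r-s)^\al$ and an \emph{increment part} $\al\int_s^r [F(r)-F(u)](r-u)^{-1-\al}\,\d u$.

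For the value part, the first-order Lipschitz bound
\[
|F(r)|\le\|\na\si\|_\infty|X^\mu_r-X^\nu_r|+\|D^L\si\|_\infty W_2(\mu_r,\nu_r),
\]
combined with $W_2(\mu_r,\nu_r)\le\W_{2,S,T,\be}(\mu,\nu)$ and integration of $(t-r)^{\al+\be_1-1}(r-s)^{-\al}$ over $[s,t]$, produces the Beta function factor $\mathcal{B}(1-\al,\al+\be_1)$ and accounts for the first summand of $\Lambda_1$ and $\Lambda_4$.

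The technical heart is the increment part. I will write
\[
F(r)-F(u)=\bigl[\si(X^\mu_r,\mu_r)-\si(X^\mu_u,\mu_u)\bigr]-\bigl[\si(X^\nu_r,\nu_r)-\si(X^\nu_u,\nu_u)\bigr]
\]
and produce two complementary estimates $|F(r)-F(u)|\le A$ and $|F(r)-F(u)|\le B(r-u)^\be$. The first is obtained by first-order Taylor in a single variable, yielding constants proportional to $\|\na\si\|_\infty\|X^\mu-X^\nu\|_{s,t,\infty}$ and $\|D^L\si\|_\infty\W_{2,S,T,\be}(\mu,\nu)$. The second comes from mixed second-order expansions exploiting $\si\in C_b^{2,(2,0)}$: the $X$-contribution is controlled using $\|\na^2\si\|_\infty$ and $\|D^L\na\si\|_\infty$ together with $\|X^\mu\|_{s,t,\be}$, $\|X^\nu\|_{s,t,\be}$, while the $\mu$-contribution uses $\|\na_1 D^L\si\|_\infty$, $\|\na_2 D^L\si\|_\infty$ and $\|D^{L,2}\si\|_\infty$ together with the defining inequality $W_{2,u,r}^c(\mu,\nu)\le (r-u)^\be\,\W_{2,S,T,\be}(\mu,\nu)$. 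Splitting the $u$-integral at a threshold $\theta$ — sup-bound on $[s,r-\theta]$, H\"older bound on $[r-\theta,r]$ — and optimizing at $\theta=(A/B)^{1/\be}$ yields
\[
\int_s^r\frac{|F(r)-F(u)|}{(r-u)^{1+\al}}\,\d u\le A^{(\be-\al)/\be}B^{\al/\be}\Bigl(\frac{1}{\al}+\frac{1}{\be-\al}\Bigr),
\]
which is precisely the mechanism producing the mixed exponent $\al/\be$ and the $\be/[(\be-\al)\al]$ factor visible in $\Lambda_2$, $\Lambda_3$ and $\Lambda_5$. The minima/maxima of $\{\|X^\mu\|_{s,t,\be},\|X^\nu\|_{s,t,\be}\}$ and of $\{\W_{2,S,T,\be}(\mu,\de_{\bf 0}),\W_{2,S,T,\be}(\nu,\de_{\bf 0})\}$ inside $\Lambda_2$ and $\Lambda_5$ arise because at each step I can freeze the argument in which expansion is cheaper and use the corresponding single-variable H\"older seminorm.

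Finally, I will assemble the pointwise bound for $D_{s+}^\al F(r)$, multiply by $(t-r)^{\al+\be_1-1}$, integrate in $r\in[s,t]$, and collect Beta function factors such as $\mathcal{B}(1-\al,\al+\be_1)$ and $\mathcal{B}(\be-\al+1,\al+\be_1)$ to match the explicit constants $\Lambda_1,\dots,\Lambda_5$ in \eqref{1-EsNoi}. The main obstacle will not be any individual estimate but the bookkeeping: organizing the mixed second-order expansion of $\si$ so that each of the five coefficients is assembled with the correct combination of sup-bounds and H\"older bounds, and so that the asymmetric minima and maxima inside $\Lambda_2$ and $\Lambda_5$ emerge from the correct choice of reference point in the Taylor expansion.
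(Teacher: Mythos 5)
Your proposal follows essentially the same route as the paper's proof: fractional integration by parts, the pointwise bound on $D_{t-}^{1-\al}B^H_{t-}$ with the same constant $C_0$, the value/increment split of the Weyl derivative, a mixed first/second-order expansion of $\si$ (using the optimal-coupling increment distance $W^c_{2,u,r}$ for the measure part), and an interpolation between the sup-bound and the H\"older bound that produces the $\al/\be$ exponents and the min/max structure in $\Lambda_2,\Lambda_5$. The only cosmetic difference is that you obtain the interpolation by splitting the $u$-integral at an optimized threshold, whereas the paper packages the same estimate as Lemma \ref{TeLe1} via the inequality $a\wedge b\le 2ab/(a+b)$; both yield the same $A^{(\be-\al)/\be}B^{\al/\be}$ form with comparable constants.
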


\begin{proof}
With the help of the fractional integration by parts formula \eqref{Fibpf}, one can see that for any $1-\be<\al<\be$ and $S\leq s<t\leq T$,
\begin{align}\label{1PfExU}
&\int_s^t\left(\si(X_r^\mu,\mu_r)-\si(X_r^{\nu},\nu_r)\right)\d B^H_r\cr
=&(-1)^\al\int_s^t {D_{s+}^\al} \left(\si({X_{.}}^\mu,\mu_\cdot)-\si({{X_{.}}^{\nu}},\nu_\cdot )\right)(r)\cdot D_{t-}^{1-\al}B^H_{t-}(r)\d r.
\end{align}
By \eqref{FrDe} and the condition imposed on $\si$, we derive that for every $r\in[s,t]$ and $\be_1\in[\be,H)$,
\begin{align}\label{3PfExU}
\left|D_{t-}^{1-\al}B^H_{t-}(r)\right|
=&\frac{1}{\Gamma(\alpha)}\left|\frac{B^H_r-B^H_t}{(t-r)^{1-\alpha}}+(1-\alpha)\int_r^t\frac{B^H_r-B^H_u}{(u-r)^{2-\alpha}}\d u\right|\cr
\le&\ff{\be_1}{\Gamma(\alpha)(\alpha+\beta_1 -1)}\|B^H\|_{s,t,\be_1}(t-r)^{\al+\be_1-1}\cr
=:& C_0\|B^H\|_{s,t,\be_1}(t-r)^{\al+\be_1-1}
\end{align}
and
\begin{align}\label{2PfEsNoi}
&|D_{s+}^\alpha(\sigma(X_\cdot^\mu,\mu_\cdot)-\sigma(X_\cdot^\nu,\nu_\cdot))(r)|\cr
=&\frac{1}{\Gamma(1-\alpha)}
\bigg|\frac{\sigma(X_r^\mu,\mu_r)-\sigma(X_r^\nu,\nu_r)}{(r - s)^\alpha}\cr
&\qquad\qquad+\alpha\int_s^r\frac{\sigma(X_r^\mu,\mu_r)-\sigma(X_r^\nu,\nu_r)-\left(\sigma(X_u^\mu ,\mu_u)-\sigma(X_u^\nu,\nu_u)\right)}
{(r - u)^{1+\alpha}}\d u\bigg|\cr
\leq&\frac{1}{\Gamma(1-\alpha)}
\Bigg[\frac{\|\na\si\|_\infty\|X^\mu-X^\nu\|_{s,r,\infty}+\|D^L \si\|_\infty\W_{2,S,T,\be}(\mu,\nu)}{(r - s)^\alpha}\cr
&\qquad\qquad+\alpha\int_s^r\frac{\left|\sigma(X_r^\mu,\mu_r)-\sigma(X_r^\nu,\nu_r)-\left(\sigma(X_u^\mu ,\mu_u)-\sigma(X_u^\nu,\nu_u)\right)\right|}
{(r - u)^{1+\alpha}}\d u\Bigg]\cr
=:& \frac{1}{\Gamma(1-\alpha)}
\left[\frac{\|\na\si\|_\infty\|X^\mu-X^\nu\|_{s,r,\infty}+\|D^L \si\|_\infty\W_{2,S,T,\be}(\mu,\nu)}{(r-s)^\alpha}+\alpha I_1 \right].
\end{align}

Next, we focus on dealing with the term $I_1$.
Owing to Lemma \ref{lem:metric1},
for each $u<r$ we may let $(Z_u^\mu, Z_r^\mu,Z_u^\nu,Z_r^\nu)$ be a random vector whose law is in $\sC_{opt}(\mu_{u,r},\nu_{u,r})$ such that
\begin{align*}
W_{2,u,r}^c(\mu,\nu)=\left(\E |Z_r^\mu-Z_u^\mu-Z_r^\nu+Z_u^\nu |^2\right)^{\ff 1 2}.
\end{align*}
Then by the condition of $\si$ again, we can deduce from Lemma \ref{FoLD} that
\begin{align*}
&\sigma(X_r^\mu,\mu_r)-\sigma(X_r^\nu,\nu_r)\cr
=&\int_0^1\ff \d {\d\th}\si(X_{r}^{\nu,\mu}(\th),\mu_r)\d\th+\int_0^1\ff\d {\d\th}\si(X_r^\nu,\sL_{Z_{r}^{\nu,\mu}(\th)})\d\th\cr
=&\int_0^1\na\si(\cdot,\mu_r)(X_{r}^{\nu,\mu}(\th))(X_r^\mu-X_r^\nu)\d\th\cr
&+\int_0^1 \left(\E\left\<D^L\si(x,\cdot)(\sL_{Z_{r}^{\nu,\mu}(\th)})(Z_{r}^{\nu,\mu}(\th)),Z_r^\mu-Z_r^\nu\right\>\right) |_{x=X_r^\nu}\d\th,
\end{align*}
where for any $\th\in(0,1), X_{r}^{\nu,\mu}(\th):=X_r^\nu+\th(X_r^\mu-X_r^\nu), Z_{r}^{\nu,\mu}(\th):=Z_r^\nu+\th(Z_r^\mu-Z_r^\nu)$, and
\begin{align*}
&\left\<D^L\si(x,\cdot)(\sL_{Z_{r}^{\nu,\mu}(\th)})(Z_{r}^{\nu,\mu}(\th)),Z_r^\mu-Z_r^\nu\right\>\\
=&\sum_{i,j=1}^d\left\<D^L\si_{ij}(x,\cdot)(\sL_{Z_{r}^{\nu,\mu}(\th)})(Z_{r}^{\nu,\mu}(\th)),Z_r^\mu-Z_r^\nu\right\>e_i\otimes e_j
\end{align*}
with $\si_{ij}=\<\si e_i,e_j\>$ and $\{e_j\}_{j=1}^d$ being any ONB on $\R^d$.\\
Combining this with the following inequality
\[|Aa-\tilde{A}\tilde{a}|\leq (|A|\wedge|\tilde{A}|) |a-\tilde{a}|+(|a|\vee|\tilde{a}|)|A-\tilde{A}|,\ \ a,\tilde{a}\in\R^d,A,\tilde{A}\in \R^d\otimes\R^d\otimes\R^d,\]
it follows that
\begin{align*}
&\left|\sigma(X_r^\mu,\mu_r)-\sigma(X_r^{\nu},\nu_r)-(\sigma(X_u^\mu,\mu_u)-\sigma(X_u^{\nu},\nu_u))\right|\cr
\leq&\left|\int_0^1 \na\si(\cdot,\mu_r)(X_{r}^{\nu,\mu}(\th))(X_r^\mu-X_r^\nu)-\na\si(\cdot,\mu_u)(X_{u}^{\nu.\mu}(\th))(X_u^\mu-X_u^\nu) \d\th\right|\cr
&+\left|\int_0^1 \bigg[\left(\E\left\<D^L\si(x,\cdot)(\sL_{Z_{r}^{\nu,\mu}(\th)})(Z_{r}^{\nu,\mu}(\th)),Z_r^\mu-Z_r^\nu\right\>\right)\big|_{x=X_r^\nu}\right.\cr
&\qquad\quad\left. - \left(\E\left\<D^L\si(y,\cdot)(\sL_{Z_{u}^{\nu,\mu}(\th)})(Z_{u}^{\nu,\mu}(\th)),Z_u^\mu-Z_u^\nu\right\>\right)\big|_{y=X_u^\nu}\bigg] \d\th\right|\cr
\leq&\int_0^1\left(\left| \na\si(\cdot,\mu_r)(X_{r}^{\nu,\mu}(\th))\right|\wedge\left|\na\si(\cdot,\mu_u)(X_u^{\nu,\mu}(\th))\right| \right)  \left|X_r^\mu-X_r^\nu-X_u^\mu+X_u^\nu\right|\d\th\cr
&+\int_0^1\left|\na\si(\cdot,\mu_r)(X_{r}^{\nu,\mu}(\th))-\na\si(\cdot,\mu_u)(X_{u}^{\nu,\mu}(\th))\right|\left( |X_r^\mu-X_r^\nu|\vee |X_u^\mu-X_u^\nu|\right)\d\th\cr
&+\int_0^1\bigg[\E\Big(\left(\left|D^L\si(x,\cdot)(\sL_{Z_{r}^{\nu,\mu}(\th)})(Z_{r}^{\nu,\mu}(\th))\right|\wedge \left|D^L\si(y,\cdot)(\sL_{Z_{u}^{\nu,\mu}(\th)})(Z_{u}^{\nu,\mu}(\th)) \right|\right)\\
&\qquad\quad \times \left|Z_r^\mu-Z_r^\nu-(Z_u^\mu-Z_u^\nu)\right|\Big)\bigg]\Big|_{x=X_r^\nu,y=X_u^\nu}\d\th\cr
&+\int_0^1\bigg[\E\Big(\left| D^L\si(x,\cdot)(\sL_{Z_{r}^{\nu,\mu}(\th)})(Z_{r}^{\nu,\mu}(\th))-D^L\si(y,\cdot)(\sL_{Z_{u}^{\nu,\mu}(\th)})(Z_{u}^{\nu,\mu}(\th))\right|\\
&\qquad\quad  \times \left(\left| Z_r^\mu-Z_r^\nu\right|\vee \left|Z_u^\mu-Z_u^\nu\right|\right)\Big)\bigg]\Big|_{x=X_r^\nu,y=X_u^\nu} \d\th.
\end{align*}
Then, using Lemma \ref{FoLD} and the condition of $\si$ again and recalling that $\sL_{(Z_u^\mu, Z_r^\mu,Z_u^\nu,Z_r^\nu)}\in\sC_{opt}(\mu_{u,r},\nu_{u,r})$ and the definition of $\mu_{u,r}^\Delta$, we obtain
\begin{align*}
&\left|\sigma(X_r^\mu,\mu_r)-\sigma(X_r^{\nu},\nu_r)-(\sigma(X_u^\mu,\mu_u)-\sigma(X_u^{\nu},\nu_u))\right|\cr
\leq&\|\na\si\|_\infty\|X^\mu-X^\nu\|_{u,r,\be}(r-u)^\be\cr
&+\left[\left(\ff 1 2 \|\na^2\si\|_\infty \left(|X^\nu_r-X^\nu_u|+|X^\mu_r-X^\mu_u|\right)+\|D^L\na\si\|_\infty W_2(\mu_r,\mu_u)\right)\wedge (2\|\na\si\|_\infty)\right]\|X^\mu-X^\nu\|_{u,r,\infty}\cr
&+\|D^L\si\|_\infty(\E|Z_r^\mu-Z_u^\mu-(Z_r^\nu-Z_u^\nu)|^2)^\ff 1 2\cr
&+\Bigg\{\bigg[\left(\|\na_1 D^L\si\|_\infty |X^\nu_r-X^\nu_u|\right)+\left(\ff 1 2\|D^{L,2}\si\|_\infty\left((\E|Z^\nu_r-Z^\nu_u|^2)^\ff 1 2+(\E|Z^\mu_r-Z^\mu_u|^2)^\ff 1 2\right)\right)\cr
&\qquad+\ff 1 2\|\na_2 D^L\si\|_\infty\left(\E|Z^\nu_r-Z^\nu_u|^2)^\ff 1 2+(\E|Z^\mu_r-Z^\mu_u|^2)^\ff 1 2\right)\bigg]\wedge (2\|D^L\si\|_\infty)\Bigg\}\W_{2}(\mu_{u,r},\nu_{u,r})\cr
\leq&\|\na\si\|_\infty\|X^\mu-X^\nu\|_{u,r,\be}(r-u)^\be\cr
&+\left[\left(\|\na^2\si\|_\infty \left( \|X^\nu\|_{u,r,\be}\vee\|X^\mu\|_{u,r,\be} \right)(r-u)^\be\right)\wedge(2\|\na\si\|_\infty)\right]\|X^\mu-X^\nu\|_{u,r,\infty}\cr
&+\left[\Bigg(\|D^L\na\si\|_\infty \ff {\sq{\mu_{u,r}^\Delta (|\cdot|^2)}} {(r-u)^\be}(r-u)^\be\Bigg)\wedge (2\|\na\si\|_\infty)\right]\|X^\mu-X^\nu\|_{u,r,\infty}\cr
&+\|D^L\si\|_\infty \ff {W_{2,u,r}^c(\mu,\nu) } {(r-u)^\be} (r-u)^\be\cr
&+\Bigg\{\left(\|\na_1 D^L\si\|_\infty \|X^\nu\|_{u,r,\be}(r-u)^\be\right)\wedge (2\|D^L\si\|_\infty)\\
&\quad+\Bigg[\left(\|D^{L,2}\si\|_\infty+\|\na_2 D^L\si\|_\infty\right)\ff {\sq{\nu_{u,r}^\Delta (|\cdot|^2)}+\sq{\mu_{u,r}^\Delta (|\cdot|^2)}} {2(r-u)^\be} (r-u)^{\be}\Bigg]\wedge(2\|D^L\si\|_\infty)\Bigg\}\W_{2}(\mu_{u,r},\nu_{u,r}).
\end{align*}
Taking into account the fact that $\|\mu\|_{2,S,T,\be}=\W_{2,S,T,\be}(\mu,\de_{\bf 0})$  and \eqref{AddDef},
and applying Lemma \ref{cor:Wc-0}(2) and \eqref{W2beT}, we have
\begin{align*}
&\left|\sigma(X_r^\mu,\mu_r)-\sigma(X_r^{\nu},\nu_r)-(\sigma(X_u^\mu,\mu_u)-\sigma(X_u^{\nu},\nu_u))\right|\cr
\leq&\|\na\si\|_\infty(r-u)^\be\|X^\mu-X^\nu\|_{u,r,\be}\cr
&+\Big[\left(\|\na^2\si\|_\infty \left( \|X^\nu\|_{u,r,\be}\vee\|X^\mu\|_{u,r,\be} \right)(r-u)^\be\right)\wedge(2\|\na\si\|_\infty)\cr
&\quad+\left(\|D^L\na\si\|_\infty \W_{2,S,T,\be}(\mu,\de_{\bf 0})(r-u)^\be\right)\wedge (2\|\na\si\|_\infty)\Big]\|X^\mu-X^\nu\|_{u,r,\infty}\cr
&+\Big[\|D^L\si\|_\infty(r-u)^\be +(\|\na_1 D^L\si\|_\infty \|X^\nu\|_{u,r,\be}(r-u)^\be)\wedge (2\|D^L\si\|_\infty)\\
&\quad+ \left((\|D^{L,2}\si\|_\infty+\|\na_2 D^L\si\|_\infty)(\W_{2,S,T,\be}(\nu,\de_{\bf 0})\vee\W_{2,S,T,\be}(\mu,\de_{\bf 0})) (r-u)^{\be}\right)\wedge(2\|D^L\si\|_\infty)\Big]\W_{2,S,T,\be}(\mu ,\nu ).
\end{align*}
Exchanging $\mu$ and $\nu$, one can see that
\beg{align*}
&\left|\sigma(X_r^\mu,\mu_r)-\sigma(X_r^{\nu},\nu_r)-(\sigma(X_u^\mu,\mu_u)-\sigma(X_u^{\nu},\nu_u))\right|\\
\leq& \|\na\si\|_\infty(r-u)^\be\|X^\mu-X^\nu\|_{u,r,\be}+\Xi_1(\si,r-u,X^\mu,X^\nu,\mu,\nu)\|X^\mu-X^\nu\|_{u,r,\infty}\\
& +\left(\|D^L\si\|_\infty(r-u)^\be+\Xi_2(\si,r-u,X^\mu,X^\nu,\mu,\nu)\right)\W_{2,S,T,\be}(\mu ,\nu),
\end{align*}
where
\beg{align*}
&\Xi_1(\si,r-u,X^\mu,X^\nu,\mu,\nu)\\
:=& \left(\|\na^2\si\|_\infty \left( \|X^\nu\|_{u,r,\be}\vee\|X^\mu\|_{u,r,\be} \right)(r-u)^\be\right)\wedge(2\|\na\si\|_\infty) \\
& +\left( \|D^L\na\si\|_\infty \left(\W_{2,S,T,\be}(\mu,\de_{\bf 0})\wedge \W_{2,S,T,\be}(\nu,\de_{\bf 0})\right)(r-u)^\be\right)\wedge(2\|\na\si\|_\infty)
\end{align*}
and
\beg{align*}
&\Xi_2(\si,r-u,X^\mu,X^\nu,\mu,\nu)\\
:=&\left(\|\na_1 D^L\si\|_\infty (\|X^\nu\|_{u,r,\be}\wedge\|X^\mu\|_{u,r,\be}) (r-u)^\be\right)\wedge (2\|D^L\si\|_\infty)\\
& + \left((\|D^{L,2}\si\|_\infty+\|\na_2 D^L\si\|_\infty)\left(\W_{2,S,T,\be}(\mu,\de_{\bf 0})\vee\W_{2,S,T,\be}(\nu,\de_{\bf 0})\right) (r-u)^{\be}\right)\wedge (2\|D^L\si\|_\infty).
\end{align*}
Consequently, applying Lemma \ref{TeLe1} to $\Xi_i(\si,r-u,X^\mu,X^\nu,\mu,\nu), i=1,2$, the above relation yields
\beg{align*}
I_1\leq& \int_s^r\frac{\|\na\si\|_\infty\|X^\mu-X^\nu\|_{u,r,\be}}
{(r - u)^{1+\alpha-\be}}\d u\cr
&+ \int_s^r\frac{\Xi_1(\si,r-u,X^\mu,X^\nu,\mu,\nu)\|X^\mu-X^\nu\|_{u,r,\infty}}
{(r - u)^{1+\alpha}}\d u\cr\cr
&+ \int_s^r\frac{\|D^L\si\|_\infty(r-u)^\be+\Xi_2(\si,r-u,X^\mu,X^\nu,\mu,\nu)}
{(r - u)^{1+\alpha}}\d u\cdot\W_{2,S,T,\be}(\mu ,\nu)\cr
\leq& \ff {\|\na\si\|_\infty} {\be-\al} (r-s)^{\be-\al}\|X^\mu-X^\nu\|_{s,r,\be}\\
& +\ff {2^{3-\ff {\al} {\be}}\be} {(\be-\al)\al}\|\na\si\|_\infty^{\ff {\be-\al} {\be}}\bigg[\left( \|\na^2\si\|_\infty \left( {\|X^\nu\|_{s,r,\be}\vee\|X^\mu\|_{s,r,\be}} \right)\right)^{\ff {\al} {\be}}\\
&\qquad\qquad\qquad \qquad+ \left(\|D^L\na\si\|_\infty  \left(\W_{2,S,T,\be}(\mu,\de_{\bf 0})\wedge \W_{2,S,T,\be}(\nu,\de_{\bf 0})\right)\right)^{\ff {\al} {\be}}\bigg]\|X^\mu-X^\nu\|_{s,r,\infty}\\
& +\Bigg\{\ff {\|D^L\si\|_\infty} {\be-\al} (r-s)^{\be-\al} +\ff {2^{3-\ff {\al} {\be}}\be} {(\be-\al)\al}\|D^L\si\|_\infty^{\ff {\be-\al} {\be}}\bigg[(\|\na_1 D^L\si\|_\infty (\|X^\nu\|_{s,r,\be}\wedge\|X^\mu\|_{s,r,\be}))^{\ff {\al} {\be}}\\
&  \qquad\quad+ \left((\|D^{L,2}\si\|_\infty+\|\na_2 D^L\si\|_\infty) (\W_{2,S,T,\be}(\mu,\de_{\bf 0})\vee\W_{2,S,T,\be}(\nu,\de_{\bf 0}) ) \right)^{\ff {\al} {\be}}\bigg]\Bigg\} \W_{2,S,T,\be}(\mu ,\nu).
\end{align*}

Now, substituting this into \eqref{2PfEsNoi} and going back to \eqref{3PfExU} and \eqref{1PfExU}, we conclude that
\begin{align*}
&\left|\int_s^t\left(\si(X_r^\mu,\mu_r)-\si(X_r^{\nu},\nu_r)\right)\d B^H_r\right|\cr
\le&\ff {C_0\mathcal{B}(1-\al,\al+\be_1)\|B^H\|_{s,t,\be_1}}{\Gamma(1-\al)}\left(\|\na\si\|_\infty\|X^\mu-X^\nu\|_{s,t,\infty}
+\|D^L \si\|_\infty\W_{2,S,T,\be}(\mu,\nu)\right)(t-s)^{\be_1}\cr
&+\ff {\al C_0\|\na\si\|_\infty \mathcal{B}(\be-\al+1,\al+\be_1)} {(\be-\al)\Gamma(1-\al)}\|B^H\|_{s,t,\be_1} (t-s)^{\be+\be_1}\|X^\mu-X^\nu\|_{s,t,\be}\cr
&+\ff {2^{3-\ff {\al} {\be}} \be C_0\|\na\si\|_\infty^{\ff {\be-\al} {\be}}\|B^H\|_{s,t,\be_1}} {(\be-\al)(\al+\be_1)\Gamma(1-\al)}\bigg[\left( \|\na^2\si\|_\infty \left( {\|X^\nu\|_{s,t,\be}\vee\|X^\mu\|_{s,t,\be}} \right)\right)^{\ff {\al} {\be}}\cr
&+\left( \|D^L\na\si\|_\infty \left(\W_{2,S,T,\be}(\mu,\de_{\bf 0})\wedge \W_{2,S,T,\be}(\nu,\de_{\bf 0})\right)\right)^{\ff {\al} {\be}} \bigg](t-s)^{\al+\be_1}\|X^\mu-X^\nu\|_{s,t,\infty}\cr
&+\ff {\al C_0\|B^H\|_{s,t,\be_1}} {\Gamma(1-\al)}\Bigg\{\ff {\|D^L\si\|_\infty \mathcal{B}(\be-\al+1,\be_1+\al)} {\be-\al} (t-s)^{\be+\be_1} \cr
&+\ff {2^{3-\ff {\al} {\be}} \be \|D^L\si\|_\infty^{\ff {\be-\al} {\be}}} {(\be-\al)(\al+\be_1)\al}\bigg[ (\|\na_1 D^L\si\|_\infty (\|X^\nu\|_{u,r,\be}\wedge\|X^\mu\|_{u,r,\be}))^{\ff {\al} {\be}}\cr
&+ \left((\|D^{L,2}\si\|_\infty+\|\na_2 D^L\si\|_\infty)(\W_{2,S,T,\be}(\mu,\de_{\bf 0})\vee\W_{2,S,T,\be}(\nu,\de_{\bf 0})) \right)^{\ff {\al}{\be}}  \bigg](t-s)^{\al+\be_1}\Bigg\} \W_{2,S,T,\be}(\mu ,\nu)\cr
\le& \|B^H\|_{s,t,\be_1} \left(\Lambda_1 (t-s)^{\be_1}+\Lambda_2  (t-s)^{\al+\be_1}\right)\|X^\mu-X^\nu\|_{s,t,\infty}\cr
&+\Lambda_3 \|B^H\|_{s,t,\be_1} (t-s)^{\be+\be_1}\|X^\mu-X^\nu\|_{s,t,\be}\cr
&+\|B^H\|_{s,t,\be_1} \left(\Lambda_4 (t-s)^{\be_1}+\Lambda_5 (t-s)^{\al+\be_1}\right)\W_{2,S,T,\be}(\mu ,\nu),
\end{align*}
where in the last inequality, we have used $t-s\leq 1$. This finishes the proof.
\end{proof}

With the help of Lemma \ref{EsNoi} above, we will provide an estimate of $\W_{2,S,T,\be}(\sL_{X^\mu},\sL_{X^\nu})$,
which is controlled by $\W_{2,S,T,\be}(\mu,\nu)$.
By using this estimate and choosing small $T$, we can prove that the mapping $\mu\mapsto \sL_{X^\mu}$ is contractive.

\beg{prp}\label{LeContr}
Let the same assumptions as for Lemma \ref{EsNoi} hold.
Then we have
\[\W_{2,S,T,\be}(\sL_{X^\mu},\sL_{X^\nu})\leq 6(1\wedge (T-S)^{\be_1-\be})\left\|e^{(1+\log 2) (T-S)\Delta^{-1}}\right\|_{L^2(\Om)}\W_{2,S,T,\be}(\mu,\nu),\]
where
\beg{align}\label{1LeContr}
\Delta:=&\left(\ff { 1  \wedge (T-S)^{\be_1}} {3(3\Lambda_1\vee \Lambda_3\vee3\Lambda_4 )\|B^H\|_{S,T,\be_1}}\right)^{\ff 1 {\be_1}}\wedge\left(\ff {1\wedge (T-S)^{\be_1}} {9 (\Lambda_2\vee \Lambda_5)\|B^H\|_{S,T,\be_1}}\right)^{\ff 1 {\al+\be_1}}\wedge\left(\ff {1\wedge (T-S)^{\be_1}} {9 (1\vee K_b)}\right)^{\ff 1 {\be_1}}
\end{align}
with $\Lambda_1,\cdots,\Lambda_5$ giving by \eqref{1-EsNoi}.
\end{prp}
\beg{proof}
Observe that, by \eqref{W2beT} we have
\beg{align}\label{0PfLeContr}
\W_{2,S,T,\be}(\sL_{X^\mu},\sL_{X^\nu})\leq \sq{\E\|X^\mu-X^\nu\|_{S,T,\infty}^2}+\sq{\E\|X^\mu-X^\nu\|_{S,T,\be}^2}.
\end{align}
While in view of \eqref{X-mu}, it is easy to see that for any $S\leq s<t\leq T$,
\beg{align}\label{3PfLeContr}
&X^\mu_t-X^\nu_t-(X^\mu_s-X^\nu_s)\cr
=&\int_s^t\left(b(X_r^\mu,\mu_r)-b(X_r^\nu,\nu_r)\right)\d r+\int_s^t\left(\si(X_r^\mu,\mu_r)-\si(X_r^{\nu},\nu_r)\right)\d B^H_r.
\end{align}
Below we shall apply Lemma \ref{EsNoi} to \eqref{3PfLeContr} to provide an estimate for \eqref{0PfLeContr}.

Taking into account the Lipschitz continuity of $b$, we first have
\begin{align*}
\left|\int_s^t(b(X_r^\mu,\mu_r)-b(X_r^\nu,\nu_r))\d r\right|&\le K_b\int_s^t(|X_r^\mu-X_r^\nu|+W_2(\mu_r,\nu_r))\d r\\
&\le K_b\left(\|X^\mu-X^\nu\|_{s,t,\infty}+\W_{2,S,T,\be}(\mu,\nu)\right)(t-s).
\end{align*}
Then, combining this with Lemma \ref{EsNoi} for $\be_1\in [\be,H)$, we get for $t-s\leq 1$
\begin{align}\label{1PfLeContr}
&\|X^\mu-X^\nu\|_{s,t,\be}\cr
\le& \,\Lambda_3\|B^H\|_{s,t,\be_1} (t-s)^{ \be_1}\|X^\mu-X^\nu\|_{s,t,\be}\cr
&+\left[\|B^H\|_{s,t,\be_1} (\Lambda_1(t-s)^{\be_1-\be}+\Lambda_2(t-s)^{\al+\be_1-\be})+K_b(t-s)^{1-\be}\right]\|X^\mu-X^\nu\|_{s,t,\infty}\cr
&+\left[\|B^H\|_{s,t,\be_1} (\Lambda_4(t-s)^{\be_1-\be}+\Lambda_5(t-s)^{\al+\be_1-\be})+K_b(t-s)^{1-\be}\right]\W_{2,S,T,\be}(\mu ,\nu).
\end{align}

Next, we put $\Delta:=t-s$ and take $\Delta$ as in \eqref{1LeContr} which implies
\begin{align}\label{5PfExU}
\left(\Lambda_3\Delta^{\be_1}\|B^H\|_{s,s+\Delta,\be_1}\right)\vee\left[\|B^H\|_{s,s+\Delta,\be_1} (\Lambda_1\Delta^{\be_1 }+\Lambda_2\Delta^{\al+\be_1 })+K_b\Delta^{\be_1} \right]&\cr
\vee\left[\|B^H\|_{s,s+\Delta,\be_1}(\Lambda_4\Delta^{ \be_1}+\Lambda_5\Delta^{\al+ \be_1})+K_b\Delta^{\be_1}\right] \vee \Delta^{\be_1}&\leq \ff   {1\wedge (T-S)^{\be_1}} 3.
\end{align}
Since $\Lambda_3\Delta^{\be_1}\|B^H\|_{s,s+\Delta,\be_1}<1$, by \eqref{1PfLeContr} we deduce
\begin{align}\label{4PfLeContr}
&\|X^\mu-X^\nu\|_{s,s+\Delta,\be}\cr
\le&\, \ff { \|B^H\|_{s,s+\Delta,\be_1} (\Lambda_1\Delta^{\be_1-\be}+\Lambda_2\Delta^{\al+\be_1-\be}) +K_b\Delta^{1-\be} } {1-\Lambda_3\Delta^{\be_1}\|B^H\|_{s,s+\Delta,\be_1}}\|X^\mu-X^\nu\|_{s,s+\Delta,\infty}\cr
& +\ff { \|B^H\|_{s,s+\Delta,\be_1}(\Lambda_4\Delta^{\be_1-\be}+\Lambda_5\Delta^{\al+\be_1-\be})  +K_b\Delta^{1-\be} } {1-\Lambda_3\Delta^{\be_1}\|B^H\|_{s,s+\Delta,\be_1} }  \W_{2,S,T,\be}(\mu ,\nu).
\end{align}
As a consequence, it follows that
\begin{align*}
&\|X^\mu-X^\nu\|_{s,s+\Delta,\infty}\cr
\le& |X^\mu_s-X^\nu_s|+\|X^\mu-X^\nu\|_{s,s+\Delta,\be}\Delta^\be\cr
\le& |X^\mu_s-X^\nu_s|+\ff { \|B^H\|_{s,s+\Delta,\be_1} (\Lambda_1\Delta^{\be_1 }+\Lambda_2\Delta^{\al+\be_1 }) +K_b\Delta } {1-\Lambda_3\Delta^{\be_1}\|B^H\|_{s,s+\Delta,\be_1}}\|X^\mu-X^\nu\|_{s,s+\Delta,\infty}\cr
&+\ff { \|B^H\|_{s,s+\Delta,\be_1}(\Lambda_4\Delta^{\be_1 }+\Lambda_5\Delta^{ \al+\be_1})  +K_b\Delta } {1-\Lambda_3\Delta^{\be_1}\|B^H\|_{s,s+\Delta,\be_1} }  \W_{2,S,T,\be}(\mu ,\nu)\\
\leq& |X^\mu_s-X^\nu_s|+\ff 1 2\|X^\mu-X^\nu\|_{s,s+\Delta,\infty}+\ff 1 2(1\wedge (T-S)^{\be_1})\W_{2,S,T,\be}(\mu ,\nu),
\end{align*}
where we use \eqref{5PfExU} and $\Delta<1$ in the last inequality.\\
Thus, we arrive at
\begin{align}\label{6PfExU}
\|X^\mu-X^\nu\|_{s,s+\Delta,\infty}\le 2|X^\mu_s-X^\nu_s|+ (1\wedge (T-S)^{\be_1})\W_{2,T,\be}(\mu,\nu).
\end{align}

Now, we let $m=[\ff {T-S} \Delta]+1$ and decompose the interval $[S,T]$ as follows:
\begin{equation*}
[S, T]=[S, S+\Delta]\cup[S+\Delta, S+2\Delta]\cup\cdots\cup[S+(m-1)\Delta, T].
\end{equation*}
Then applying \eqref{6PfExU} recursively and noting the fact that $X^\mu_S=X^\nu_S=X_S$, we derive that
\begin{align*}
\|X^\mu-X^\nu\|_{S,S+\Delta,\infty}\le (1\wedge (T-S)^{\be_1})\W_{2,S,T,\be}(\mu,\nu),
\end{align*}
and for each $1\le k\le m-1$,
\begin{align*}
\|X^\mu-X^\nu\|_{S+k\Delta,(S+(k+1)\Delta)\wedge T,\infty}
\le& 2|X^\mu_{S+k\Delta}-X^\nu_{S+k\Delta}|+ (1\wedge (T-S)^{\be_1})\W_{2,S,T,\be}(\mu,\nu)\cr
\le&2^k|X^\mu_{S+\Delta}-X^\nu_{S+\Delta}|+\sum_{i=0}^{k-1}2^{i} (1\wedge (T-S)^{\be_1})\W_{2,S,T,\be}(\mu,\nu)\cr
\le& \sum_{i=0}^{k} 2^{i}(1\wedge (T-S)^{\be_1})\W_{2,S,T,\be}(\mu,\nu)\cr
\le&2^{k+1}(1\wedge (T-S)^{\be_1})\W_{2,S,T,\be}(\mu,\nu).
\end{align*}
Therefore, we conclude that
\begin{align}\label{2PfLeContr}
\|X^\mu-X^\nu\|_{S,T,\infty}&=\max_{0\leq k\leq m-1}\left\| {{X^\mu} - {X^{\nu}}} \right\|_{S+k\Delta,[S+(k+1)\Delta]\wedge T,\infty}\cr
&\le2^{m}(1\wedge (T-S)^{\be_1})\W_{2,S,T,\be}(\mu,\nu)\cr
& \leq 2 (1\wedge (T-S)^{\be_1})e^{(T-S)\Delta^{-1}\log 2}\W_{2,S,T,\be}(\mu,\nu).
\end{align}

Finally, we are to estimate the term $\|X^\mu-X^\nu\|_{S,T,\be}$.
By \eqref{4PfLeContr}, \eqref{2PfLeContr} and \eqref{5PfExU}, we obtain
\beg{align*}
&\|X^\mu-X^\nu\|_{s,s+\Delta,\be}\cr
\le&\, 2\ff { \|B^H\|_{s,s+\Delta,\be_1} (\Lambda_1\Delta^{\be_1}+\Lambda_2\Delta^{\al+\be_1}) +K_b\Delta } {1-\Lambda_3\Delta^{\be_1}\|B^H\|_{S,T,\be_1}}  (1\wedge (T-S)^{\be_1}) \Delta^{-\be}e^{(T-S)\Delta^{-1}\log 2}\W_{2,S,T,\be}(\mu,\nu)\cr
& +\ff { \|B^H\|_{s,s+\Delta,\be_1}(\Lambda_4\Delta^{ \be_1}+\Lambda_5\Delta^{\al+ \be_1})  +K_b\Delta } {1-\Lambda_3\Delta^{\be_1}\|B^H\|_{S,T,\be_1} } \Delta^{-\be}  \W_{2,S,T,\be}(\mu ,\nu)\\
\le&\, 2\ff { \|B^H\|_{s,s+\Delta,\be_1} (\Lambda_1\Delta^{\be_1}+\Lambda_2\Delta^{\al+\be_1}) +K_b\Delta^{\be_1} } {1-\Lambda_3\Delta^{\be_1}\|B^H\|_{S,T,\be_1}}  (1\wedge (T-S)^{\be_1}) \Delta^{-\be}e^{(T-S)\Delta^{-1}\log 2}\W_{2,S,T,\be}(\mu,\nu)\cr
& +\ff { \|B^H\|_{s,s+\Delta,\be_1}( \Lambda_4\Delta^{ \be_1}+\Lambda_5\Delta^{\al+ \be_1})  +K_b\Delta^{\be_1} } {1-\Lambda_3\Delta^{\be_1}\|B^H\|_{S,T,\be_1} } \Delta^{-\be}  \W_{2,S,T,\be}(\mu ,\nu)\\
\leq &\, \Delta^{-\be}(1\wedge (T-S)^{\be_1}) \left( e^{(T-S)\Delta^{-1}\log 2}+\ff { 1} {2}\right)\W_{2,S,T,\be}(\mu,\nu).
\end{align*}
Here we have used $\Delta<1$ again. \\
Then by using Lemma \ref{TeLe2} and $\Delta<T-S$, we derive
\beg{align*}
\|X^\mu-X^\nu\|_{S,T,\be}&\leq \left(1+\left[\ff {T-S} {\Delta}\right]\right)^{1-\be}\Delta^{-\be}(1\wedge (T-S)^{\be_1})\left(  e^{(T-S)\Delta^{-1}\log 2}+\ff { 1} {2}\right)\W_{2,S,T,\be}(\mu,\nu)\\
& \leq (2(T-S))^{1-\be}\Delta^{-1}(1\wedge (T-S)^{\be_1})\left(  e^{(T-S)\Delta^{-1}\log 2}+\ff { 1} {2}\right)\W_{2,S,T,\be}(\mu,\nu)\\
&\leq  2(1\wedge (T-S)^{\be_1-\be}) \left((T-S)\Delta^{-1}\right)\left(  e^{(T-S)\Delta^{-1}\log 2}+\ff { 1} {2}\right)\W_{2,S,T,\be}(\mu,\nu),
\end{align*}
where the last inequality is due to the relation $(T-S)^{-\be}(1\wedge (T-S)^{\be_1})\leq 1\wedge (T-S)^{\be_1-\be}$.\\
Hence, substituting this and \eqref{2PfLeContr} into \eqref{0PfLeContr} and using the inequality $x\leq e^x$ for $x>0$,
we obtain
\beg{align*}
&\W_{2,S,T,\be}(\sL_{X^\mu},\sL_{X^\nu})\cr
\leq& 2(1\wedge (T-S)^{\be_1-\be}) \bigg[ \left\|e^{(T-S)\Delta^{-1}\log 2}\right\|_{L^2(\Om)}+\left\|(T-S)\Delta^{-1}\left(e^{(T-S)\Delta^{-1}\log 2}+1\right)\right\|_{L^2(\Om)}\bigg]\W_{2,S,T,\be}(\mu,\nu)\\
\leq & 6(1\wedge (T-S)^{\be_1-\be})\left\|e^{(1+\log 2) (T-S)\Delta^{-1}}\right\|_{L^2(\Om)}\W_{2,S,T,\be}(\mu,\nu).
\end{align*}
The proof is complete.
\end{proof}

Now, we are in the position to prove Theorem \ref{ExU}.

\emph{\textbf{Proof of Theorem \ref{ExU}.}}
Let $0\leq T_1\leq T$. Assuming that \eqref{EquM} is well-posed on $[0,T_1]$,  we want to prove that there is $R_1>0$ which is independent of $T_1$ such that \eqref{EquM} is well-posed on $[T_1,(T_1+R_1)\wedge T]$.

We first note that for any $\be\in (\ff 1 2,H)$, with the help of \eqref{XTbe-L0} there exists a positive constant $N$ such that
\beg{equation}\label{Seadd-new}
\|\sL_{X_{T_1}}\|_{2}\leq N(1+\|\sL_{X_0}\|_2),
\end{equation}
and in view of \eqref{exp+ep} there are positive constants $\ep_0'\in (0,\ep_0\wedge \ff {2(H^2+H-1)} {H})$, $\beta\in(\ff 1 2\vee(\ff {1-H} {H}+\ff {\ep_0'} 2),H)$ and $\mathcal{E}_{T,\be,X_0,\ep'_0}>0$ such that
\beg{equation}\label{add-new-2ine}
\E e^{|X_{T_1}|^{\ff {2(1-H)} {H}+\ep_0'}} \leq\mathcal{E}_{T,\be,X_0,\ep'_0}.
\end{equation}
Here the constants $N, \ep_0', \beta$ and $\mathcal{E}_{T,\be,X_0,\ep'_0}$ are all independent of $T_1$.

Due to {\bf (H)} and \cite[Theorem 2.1]{NR},  the frozen equation \eqref{X-mu} has a unique pathwise solution $X^\mu$ on $[T_1,T]$ with any  $\mu\in\sP_{2,\be}(W_{T_1,T}^d)$, and for any $\be'\in (\ff 1 2,H)$, $X^\mu$ belongs to $C^{\be'}([T_1,T];\R^d)$ and satisfies \eqref{in-mom} with $\be$ replaced by $\be'$. Consequently, there exists a mapping $\mu\mapsto \sL_{X^\mu}$ on $\sP_{2,\be}(W_{T_1,T}^d)$. Then with the help of Proposition \ref{lem-inva}, we know that there are positive constants $R_0$ and $M_0$, which are independent of $T_1$, such that for any $M\geq M_0$ and $T_2\leq(T_1+R_0)\wedge T$, the set $\sP_{M,\sL_{X_{T_1}},T_1,T_2,\be}$ is invariant for this mapping and \eqref{pri-XX0} holds on $[T_1,T_2]$.


Next, we want to apply Proposition \ref{LeContr} on the interval $[T_1,T_2]$ to show that the mapping $\mu\mapsto \sL_{X^\mu}$ is also contractive on $\sP_{M,\sL_{X_{T_1}},T_1,T_2,\be}$ for $\be$ closed to $H$ and $T_2$ closed to $T_1$.
For this, we first estimate  $\|e^{c(T_2-T_1)\Delta^{-1}}\|_{L^2(\Om)}$ for $\mu,\nu\in\sP_{M,\sL_{X_{T_1}},T_1,T_2,\be}$,
where $\Delta$ is given in Proposition \ref{LeContr} with $S,T$ replaced by $T_1,T_2$. Before going on, we point out how to choose proper $\al$ and $\be$.

Since $H\in (\ff {\sq 5-1} 2,1)$, we assert that for each $\ff {\sq 5-1} 2<\be<H$, there is  $\al\in(1-\be,\be)$ such that
\begin{equation}\label{eq-ab2}
\ff {\al} {\be^2(\al+\be)}+\ff 1 {\al+\be}<2.
\end{equation}
Indeed, for $\be\geq \ff {\sq 2} 2$, we find that $\ff {\al} {\be^2}+1  <2\al+1 <2\al+2\be$, which is consistent with \eqref{eq-ab2}; for $\ff {\sq 5-1} 2<\be<\ff {\sq 2} 2$, we find that $1-\be<\ff {1-2\be} {2-\be^{-2}}$, which yields that for all $\al\in(1-\be,\ff {1-2\be} {2-\be^{-2}})$ satisfying \eqref{eq-ab2}. On the other hand, it follows
\beg{align*}
\ff {2\al} {(2(\al+\be)-1)\be}>\ff {2(1-\be)} {(2(1-\be+\be)-1)\be}=\ff {2(1-\be)} {\be}>\ff {2(1-H)} {H},
\end{align*}
and if we take $\be=H$ and $\al=1-H$, there is
\[\ff {2\al} {(2(\al+\be)-1)\be}=\ff {2(1-H)} H.\]
Thus, for $\ep_0'$, we can choose $\al$ and $\be$ such that
\[\ff {2\al} {(2(\al+\be)-1)\be}<\ff {2(1-H)} H+\ep_0'\]
holds. We fix $\al$ and $\be$ chosen above. Then for any $\be_1\in (\be,H)$, we have
\begin{align}\label{eq-ab2-new}
\ff {\al} {\be^2(\al+\be_1)}&+\ff 1 {\al+\be_1}<2,\\
\ff {2\al} {(2(\al+\be_1)-1)\be}&<\ff {2(1-H)} H+\ep_0'.\label{2PfMR}
\end{align}

Fix $\be_1>\be$ and $M\geq M_0$.  Now, we estimate  $\|e^{(1+\log 2)(T_2-T_1)\Delta^{-1}}\|_{L^2(\Om)}$.
According to \eqref{1LeContr} and \eqref{1-EsNoi}, we obtain
\beg{align}\label{1PfMR}
&(T_2-T_1)\Delta^{-1}\cr
=& (T_2-T_1)\left[\ff  {\left(3(3\Lambda_1\vee \Lambda_3\vee3\Lambda_4 )\|B^H\|_{T_1,T_2,\be_1}\right)^{\ff 1 {\be_1}}} {1\wedge (T_2-T_1)}\right.\cr
&\qquad\qquad \left.\vee\left(\ff {9 (\Lambda_2\vee \Lambda_5)\|B^H\|_{T_1,T_2,\be_1}} {1\wedge (T_2-T_1)^{\be_1}} \right)^{\ff 1 {\al+\be_1}}\vee \left(\ff {9 (1\vee K_b)}  {1\wedge (T_2-T_1)^{\be_1}}\right)^{\ff 1 {\be_1}}\right]\cr
\leq& C_{\al,\be,\be_1,\si}(1\vee (T_2-T_1))\times\Bigg[1\vee\|B^H\|_{T_1,T_2,\be_1}^{\ff 1 {\be_1}}\cr
&\ \ \ \vee \left(\left(\|X^\nu\|_{T_1,T_2,\be}\vee \|X^\mu\|_{T_1,T_2,\be}+M(1+\|\sL_{X_{T_1}}\|_2)\right)^{\ff {\al} {\be(\al+\be_1 )}}\|B^H\|_{T_1,T_2,\be_1}^{\ff 1 {\al+\be_1}}\right)\Bigg]\cr
\leq& C_{\al,\be,\be_1,\si}(1\vee (T_2-T_1))\times\Bigg[1\vee\|B^H\|_{T_1,T_2,\be_1}^{\ff 1 {\be_1}}\cr
&\ \ \ \vee \left(\left(\|X^\nu\|_{T_1,T_2,\be}\vee \|X^\mu\|_{T_1,T_2,\be}+M(N+1)(1+\|\sL_{X_{0}}\|_2)\right)^{\ff {\al} {\be(\al+\be_1 )}}\|B^H\|_{T_1,T_2,\be_1}^{\ff 1 {\al+\be_1}}\right)\Bigg]\cr
\leq& C_{\al,\be,\be_1,\si,T_2-T_1,M,N,\|\sL_{X_0}\|_2} \Bigg[1+\|B^H\|_{T_1,T_2,\be_1}^{\ff 1 {\be_1}}+\|B^H\|_{T_1,T_2,\be_1}^{\ff 1 {\al+\be_1}}\cr
&\ \ \ + \left(\|X^\nu\|_{T_1,T_2,\be}\vee \|X^\mu\|_{T_1,T_2,\be}\right)^{\ff {\al} {\be(\al+\be_1 )}}\|B^H\|_{T_1,T_2,\be_1}^{\ff 1 {\al+\be_1}}\Bigg],
\end{align}
where we use $\mu,\nu\in\sP_{M,\sL_{X_{T_1}},T_1,T_2,\be}$ in the first inequality and \eqref{Seadd-new} in the second inequality.\\
Due to the Fernique theorem and
the fact that $\ff 1 {\al+\be_1}<\ff 1 {\be_1}<2$, we have for any $c>0$,
\beg{equation}\label{Add-6PfMR}
\E \exp\left\{c\left(\|B^H\|_{T_1,T_2,\be_1}^{\ff 1 {\be_1}}+\|B^H\|_{T_1,T_2,\be_1}^{\ff 1 {\al+\be_1}}\right)\right\}<+\infty,
\end{equation}
and the expectation value depends only on $\al,\be_1,T_2-T_1$ and $c$.\\

Next, we intend to estimate
\beg{equation}\label{eXXB}
\E\exp\left\{c\left(\|X^\nu\|_{T_1,T_2,\be}\vee \|X^\mu\|_{T_1,T_2,\be}\right)^{\ff {\al} {\be(\al+\be_1)}}\|B^H\|_{T_1,T_2,\be_1}^{\ff 1 {\al+\be_1}}\right\},
\end{equation}
where $c$ is a constant depending only on $M,N,\al,\be,\be_1,\si$ and $\|\sL_{X_0}\|_{2}$.
Using {\bf (H)} and Lemma \ref{mom-est} with
\[K_{\tilde{b}}=K_b\|\mu\|_{2,T_1,T_2,\be},\quad L_{\tilde{b}}=K_b\vee|b({\bf 0},{\bf\de_0})|,\quad K_{\tilde{\sigma}}=\|D^L\si\|_\infty\|\mu\|_{2,T_1,T_2,\be},\quad L_{\tilde{\si}}=\|\na\si\|_\infty,\quad \ga_0=\be,\]
for the term $X^\mu$, we can write $G(T_2-T_1,K_{\tilde{b}},K_{\tilde{\si}},B^H)$ in \eqref{Fan-add4} as follows:
\beg{align*}
&G(T_2-T_1,K_b\|\mu\|_{2,T_1,T_2,\be},\|D^L\si\|_\infty\|\mu\|_{2,T_1,T_2,\be},B^H)\\
=&C_{K_b,\|D^L\si\|_\infty,\be}\Bigg\{\left((1\vee (T_2-T_1))\|B^H\|_{T_1,T_2,\be}^{\ff 1 {\be}}\right) \\
&\qquad\qquad\qquad+\left[\left(\left(1+\|D^L\si\|_\infty\|\mu\|_{2,T_1,T_2,\be}\right)\|B^H\|_{T_1,T_2,\be} \right)^{\ff 1 {2\be }}\left((T_2-T_1)\vee (T_2-T_1)^{\ff {1} {2 }}\right)\right]\\
&\qquad\qquad\qquad+\left[\left(1+K_b\|\mu\|_{2,T_1,T_2,\be}\right)\left((T_2-T_1)\vee (T_2-T_1)^{1-\be}\right)\right]\Bigg\}\\
\leq& C_{T_2-T_1,M,\|\sL_{X_0}\|_2,K_b,\|D^L\si\|_\infty,\be}\left(1+\|B^H\|_{T_1,T_2,\be}^{\ff 1 {\be}}\right),
\end{align*}
where the last inequality is due to the relation
\beg{align*}
\|\mu\|_{2,T_1,T_2,\be}&\leq M(1+\|\mu_{T_1}\|_2)=M(1+\|\sL_{X_{T_1}}\|_2)\\
&\leq  M(1+N\|\sL_{X_{T_1}}\|_2)\leq M(N+1)(1+\|\sL_{X_0}\|_2)
\end{align*}
For simplicity, we denote $G=G(T_2-T_1,K_b\|\mu\|_{2,T_1,T_2,\be},\|D^L\si\|_\infty\|\mu\|_{2,T_1,T_2,\be},B^H)$.
Then, it follows from \eqref{ad-1be}, \eqref{ad-supn} and the H\"older inequality that there exists a positive constant $\ti C$  depending on $T_2-T_1$, $N,M$, $\|\sL_{X_0}\|_2$, $K_b$, $\|D^L\si\|_\infty$, $\be$ and being locally bounded on $T_2-T_1$ such that
\beg{align*}
&\|X^\mu\|_{T_1,T_2,\be}\cr
\leq& \left(1+G \right)^{1-\be} G^{\be}+C\left((T_2-T_1)^{\be}\vee (T_2-T_1)  \right)^{1-\be} \exp\left\{ 2C(T_2-T_1)+1\wedge (T_2-T_1)^{\be}\right\}  |X_{T_1}|\cr
&\quad  +C\left((T_2-T_1)^{\be}\vee (T_2-T_1)  \right)^{1-\be} (1\wedge (T_2-T_1)^{\be})e^{2C(T_2-T_1)}\left(1+G\right)\\
\leq& \ti C\left(1+\|B^H\|_{T_1,T_2,\be}^{\ff 1 {\be}}+|X_{T_1}|\right).
\end{align*}
This, together with the $C_r$-inequality and  the fact that
\[\|B^H\|_{T_1,T_2,\be}\leq (T_2-T_1)^{\be_1-\be}\|B^H\|_{T_1,T_2,\be_1},\]
implies that
\beg{align}\label{XXB}
&\left(\|X^\nu\|_{T_1,T_2,\be}\vee \|X^\mu\|_{T_1,T_2,\be}\right)^{\ff {\al} {\be(\al+\be_1)}}\|B^H\|_{T_1,T_2,\be_1}^{\ff 1 {\al+\be_1}}\cr
\leq &\ti C\left(1+\|B^H\|_{T_1,T_2,\be}^{\ff 1 {\be}}+|X_{T_1}|\right)^{\ff {\al} {\be(\al+\be_1)}}\|B^H\|_{T_1,T_2,\be_1}^{\ff 1 {\al+\be_1}}\cr
\leq &\ti C\left(1+\|B^H\|_{T_1,T_2,\be}^{\ff {\al} {\be^2(\al+\be_1)}}+|X_{T_1}|^{\ff {\al} {\be(\al+\be_1)}}\right)\|B^H\|_{T_1,T_2,\be_1}^{\ff 1 {\al+\be_1}}\cr
\leq &\ti C\left(\|B^H\|_{T_1,T_2,\be_1}^{\ff 1 {\al+\be_1}}+\|B^H\|_{T_1,T_2,\be_1}^{\ff {\al} {\be^2(\al+\be_1)}+\ff 1 {\al+\be_1}}+|X_{T_1}|^{\ff {\al} {\be(\al+\be_1)}} \|B^H\|_{T_1,T_2,\be_1}^{\ff 1 {\al+\be_1}}\right).
\end{align}
Note that again thanks to the Fernique theorem and \eqref{eq-ab2-new}, we get for any $c>0$,
\beg{equation}\label{EBB}
\E\exp\left\{c\left(\|B^H\|_{T_1,T_2,\be_1}^{\ff 1 {\al+\be_1}}+\|B^H\|_{T_1,T_2,\be_1}^{\ff {\al} {\be^2(\al+\be_1)}+\ff 1 {\al+\be_1}}\right)\right\}<+\infty,
\end{equation}
and the expectation value depends only on $T_2-T_1,\al,\be_1,\be$ and $c$. \\
In addition, by the Young inequality, we have for any $\ep>0$,
\beg{align}\label{3PfMR}
c|X_{T_1}|^{\ff {\al} {\be(\al+\be_1)}}\|B^H\|_{T_1,T_2,\be_1}^{\ff 1 {\al+\be_1}}\leq  \ff {\left(2(\al+\be_1)-1\right)c^{\ff {2(\al+\be_1)} {2(\al+\be_1)-1}}} {\ep^{\ff 1 {2(\al+\be_1)-1}}(2(\al+\be_1))^{\ff {2(\al+\be_1)} {2(\al+\be_1)-1}}}|X_{T_1}|^{\ff {2\al} {(2(\al+\be_1)-1)\be}}+\ep \|B^H\|_{T_1,T_2,\be_1}^2.
\end{align}
It follows from  \eqref{2PfMR} and  the H\"older inequality that there is a positive constant $C_{\al,\be_1,\ep,\ep_0',H,c}$ such that
\beg{align*}
\ff {\left(2(\al+\be_1)-1\right)c^{\ff {2(\al+\be_1)} {2(\al+\be_1)-1}}} {\ep^{\ff 1 {2(\al+\be_1)-1}}(2(\al+\be_1))^{\ff {2(\al+\be_1)} {2(\al+\be_1)-1}}}|X_{T_1}|^{\ff {2\al} {(2(\al+\be_1)-1)\be}}\leq \ff 1 2|X_{T_1}|^{\ff {2(1-H)} {H}+\ep_0'}+C_{\al,\be_1,\ep,\ep_0',H,c}.
\end{align*}
Then, resorting to this and \eqref{3PfMR} and using the H\"older inequality, \eqref{add-new-2ine} and the Fernique theorem again, we  can take $\ep$ small enough such that
\beg{align}\label{exp-XXB}
&\E\exp\left\{c|X_{T_1}|^{\ff {\al} {\be(\al+\be_1)}} \|B^H\|_{T_1,T_2,\be_1}^{\ff 1 {\al+\be_1}}\right\}\cr
\leq& \E\exp\left\{\ff 1 2|X_{T_1}|^{\ff {2(1-H)} {H}+\ep_0'}+C_{\al,\be_1,\ep,\ep_0',H,c}+\ep  c\|B^H\|_{T_1,T_2,\be_1}^2\right\}\cr
\leq& \sq{\mathcal{E}}\exp\left(C_{\al,\be_1,\ep,\ep_0',H,c}\right)\left(\E\exp\left\{2\ep c\|B^H\|_{T_1,T_2,\be_1}^2\right\}\right)^{\ff 1 2}<+\infty.
\end{align}
where we denote $\mathcal{E}=\mathcal{E}_{T,\be,X_0,\ep'_0}$. Combining this with \eqref{EBB} and \eqref{XXB}, we see that \eqref{eXXB} is finite and the expectation value depends only on $\mathcal{E}$, $T_2-T_1$, $N, M$, $\|\sL_{X_0}\|_2$, $K_b$, $\|D^L\si\|_\infty$, $\be$, $\be_1$, $\|\sL_{X_0}\|_{2}$ and $c$.
Consequently, by \eqref{1PfMR} and \eqref{Add-6PfMR} we see that $\|e^{(1+\log 2)(T_2-T_1)\Delta^{-1}}\|_{L^2(\Om)}$ is finite and the value depends only on $\mathcal{E}$, $T_2-T_1$, $N, M$, $\|\sL_{X_0}\|_2$, $K_b$, $\|D^L\si\|_\infty$, $\be, \be_1$, $\|\sL_{X_0}\|_{2}$.

Since $\|e^{(1+\log2)(T_2-T_1)\Delta^{-1}}\|_{L^2(\Om)}$ is bounded by a constant depending on  $T_2-T_1$. So, we obtain
\[\lim_{T_2-T_1\ra 0^+}(1\wedge (T_2-T_1))^{\be_1-\be}\left\|e^{(1+\log 2) (T_2-T_1)\Delta^{-1}}\right\|_{L^2(\Om)}=0.\]
Then according to Proposition \ref{LeContr}, this implies that there exists a positive constant $R_1\leq R_0$ depending on $\mathcal{E}$, $N,M$, $\|\sL_{X_0}\|_2$, $K_b$, $\|D^L\si\|_\infty$, $\be$, $\be_1$, $\|\sL_{X_0}\|_{2}$ such that the mapping $\mu\mapsto\sL_{X^\mu}$ is contractive for any $T_2=T_1+ R_1$, i.e., there is a constant $c\in(0,1)$ which is independent of $T_1$ such that
\beg{equation}\label{W2Tbe-con}
\W_{2,T_1,T_2,\be}(\sL_{X^\mu},\sL_{X^\nu})\leq c\W_{2,T_1,T_2,\be}(\mu,\nu),\ \ \mu,\nu\in\sP_{M,\sL_{X_{T_1}},T_1,T_2,\be}.
\end{equation}
It follows from Theorem \ref{ThProb} and the contraction mapping principle on $\sP_{M,\sL_{X_{T_1}},T_1,T_2,\be}$ that the mapping $\mu\mapsto\sL_{X^\mu}$
has a unique fixed point, saying $\mu$. Thus the solution of equation \eqref{X-mu} satisfies $\sL_{X^\mu}=\mu$, and then equation \eqref{EquM} has a solution.
By using \eqref{pri-XX0} and $R_1\leq R_0$, any solution of equation \eqref{EquM} on $[T_1,T_2]$, saying $\{X_t\}_{t\in [T_1,T_2]}$, satisfies  $\sL_{X}\in \sP_{M,\sL_{X_{T_1}},T_1,T_2,\be}$. Then there holds $\sL_{X}=\mu$. Since equation \eqref{X-mu}  has a pathwise unique solution, we conclude that $X_t=X^\mu_t$, and equation \eqref{EquM} has a unique strong solution on $[T_1,T_2]$.

Finally, we can establish the existence and uniqueness for the solution of \eqref{EquM} on $[kR_1,(k+1)R_1\wedge T]$. Therefore, we prove the well-posedness of \eqref{EquM} for any $T>0$.

\qed

\section{Large and moderate deviation principles}

In this section, we aim to investigate the asymptotic behaviors for DDSDEs with multiplicative fractional noises.
Section 4.1 serves as a recap of the weak convergence criteria in the fBM setting established in our previous work \cite{FYY}.
In Sections 4.2 and 4.3, we will show respectively how to combine this criteria to establish large and moderate deviation principles for DDSDEs with multiplicative fractional noises.

\subsection{Weak convergence criteria for fBM}

Recall first some definitions of the theory of large deviation principle (LDP).
Let $\mathscr{E}$ be a Polish space with the Borel $\sigma$-field $\sB(\mathscr{E})$.

\beg{defn}\label{De(rate)}
(Rate Function)
A function $I$ is called a rate function, if for each constant $M<+\infty$, the level set $\{x\in\mathscr{E}: I(x)\leq M\}$ is a compact subset of $\mathscr{E}$.
\end{defn}

\beg{defn}\label{De(ldp)}
(Large Deviation Principle)
Let $I$ be a rate function on $\mathscr{E}$.
Given a collection $\{\ell(\ep)\}_{\ep>0}$ of positive reals,
a family $\{\mathbb{X}^\ep\}_{\ep>0}$ of $\mathscr{E}$-valued random variables is said to be satisfied a LDP on $\mathscr{E}$ with speed $\ell(\ep)$ and rate function $I$
if the following two conditions hold:
\begin{itemize}
\item[(i)](Upper bound) For each closed subset $F\subset\mathscr{E}$,
\begin{align*}
\limsup_{\ep\to 0}\ell(\ep)\log{\mathbb{P}(\mathbb{X}^\ep\in F)} \leq -\inf_{x\in F}I(x).
\end{align*}

\item[(ii)](Lower bound) For each open subset $G\subset\mathscr{E}$,
\begin{align*}
\liminf_{\ep\to 0}\ell(\ep)\log{\mathbb{P}(\mathbb{X}^\ep\in G)} \geq -\inf_{x\in G}I(x).
\end{align*}
\end{itemize}
\end{defn}
For any $\ep>0$, let $\G^\ep: C([0,T]; \R^d)\ra\mathscr{E}$ be a measurable map.
Next, we will present a weak convergence criteria for the LDP of $\mathbb{X}^\ep:=\G^\ep(\ep^H B_\cdot^H)$ as $\ep$ goes to $0$.
To this end, we need to give some notations.
Let
\begin{align*}
\A=\left\{\phi:\phi\  \mathrm{is}\ \R^d\textit{-}\mathrm{valued}\ \sF_t\textit{-}\mathrm{predictable\ process\ and \ \|\phi\|_\H^2<+\infty\ \P\textit{-}\mathrm{a.s.} } \right\},
\end{align*}
and for any $M>0$, let
\begin{align*}
S_M=\left\{h\in\H: \ff 1 2\|h\|_\H^2\leq M\right\}.
\end{align*}
It is readily checked that $S_M$ endowed with the weak topology is a Polish space.
We also set
\begin{align*}
\mathcal{A}_M:=\{\phi\in\mathcal{A}: \phi(\omega)\in S_M, \ \mathbb{P}\textit{-}\mathrm{a.s.}\}.
\end{align*}

Now, we introduce the following sufficient condition for the LDP of $\mathbb{X}^\ep$ when $\ep$ tends to $0$.
\begin{enumerate}

\item[\textsc{\textbf{(A)}}] There exists a measurable map  $\G^0: I_{0+}^{H+1/2}(L^2([0,T],\R^d))\ra\mathscr{E}$ for which the following two conditions hold.

\item[(i)] Let $\{h^\ep:\ep>0\}\subset\mathcal{A}_M$ for any $M\in(0,+\infty)$. For each $\delta>0$,
\begin{align*}
\lim_{\ep\ra 0}\P\left(d\left(\G^\ep(\ep^H B_\cdot^H+\ep^H/\ell^{\ff 1 2}(\ep)(R_Hh^\ep)(\cdot)),\G^0((R_Hh^\ep)(\cdot))\right)>\delta\right)=0,
\end{align*}
where $d(\cdot,\cdot)$ stands for the metric on $ \mathscr{E}, \{\ell^\ep\}_{\ep>0}$ are positive reals.

\item[(ii)]  Let $\{h^n:n\in\mathbb{N}\}\subset S_M$ for any $M\in(0,+\infty)$. If $h^n$ converges to some element $h$ in $S_M$ as $n\ra+\infty$, then $\G^0(R_Hh^n)$ converges to $\G^0(R_Hh)$ in $\mathscr{E}$.

\end{enumerate}

\beg{prp}\label{Suf2(LDP)} \cite[Proposition 3.5]{FYY}
If $\mathbb{X}_\cdot^\epsilon=\mathcal{G}^\ep(\ep^H B_\cdot^H)$ and \textsc{\textbf{(A)}} holds, then the family $\{\mathbb{X}^\ep:\ep>0\}$ satisfies the LDP on $\mathscr{E}$ with speed $\ell(\ep)$
and the rate function $I$ given by
\begin{align}\label{1-Suf1(LDP)}
I(f)=\inf_{\{h\in\H:f=\G^0(R_Hh)\}}\left\{\ff 1 2 \|h\|_{\H}^2\right\},\ \ f\in \mathscr{E}.
\end{align}
Here and in the sequel, we follow the convention that the infimum over an empty set is $+\infty$.
\end{prp}

\subsection{Large deviation principle (LDP)}

In this part, we consider the following DDSDE with small multiplicative fractional noise: for every $\ep>0$ and $T>0$,
\begin{align}\label{LDP-DDsde}
\d X_t^\epsilon =b(X_t^\ep,\mathscr{L}_{X_t^\ep})\d t+\ep^H\sigma(X_t^\ep,\mathscr{L}_{X_t^\ep})\d B_t^H,\ \ X_0^\ep =x\in\R^d, \  t\in[0,T],
\end{align}
where $H\in (\ff {\sq 5-1} 2,1)$.
According to Theorem \ref{ExU},  under {\bf (H)} equation \eqref{LDP-DDsde} has a unique solution on $[0,T]$.

In order to apply Proposition \ref{Suf2(LDP)} above, we choose $\mathscr{E}:=C([0,T]; \R^d)$.
For each fixed $\mu\in\sP_{2,\be}(W_T^d)$ with $\be\in(1-H,1)$, we consider the following reference equation:
\beg{align}\label{FrozeEQ}
\d\Upsilon_t=b(\Upsilon_t,\mu_t)\d t+\sigma(\Upsilon_t,\mu_t)\d B_t^H,\ \ \Upsilon_0=y\in\R^d,\ 0\leq t\leq T.
\end{align}
Owing to \cite[Theorem 2.1]{NR}, {\bf (H)} implies that equation \eqref{FrozeEQ} admits a unique solution.
Then, there is a measurable map
\begin{align}\label{MeasMap}
\G_\mu: C([0,T]; \R^d)\rightarrow C([0,T]; \R^d)
\end{align}
such that $\Upsilon_\cdot=\mathcal{G}_\mu(B^H_\cdot).$
Moreover, for any $h\in\mathcal{A}_M$, let $\Upsilon_\cdot^h=\mathcal{G}_\mu(B_\cdot^H+(R_Hh)(\cdot))$, then $\Upsilon^h$ fulfills the following equation
\begin{align}\label{1-Le(Meas)}
\Upsilon_t^h=&y+\int_0^tb(\Upsilon_s^h, \mu_s)\d s+\int_0^t\si(\Upsilon_s^h, \mu_s)\d(R_Hh)(s)
+\int_0^t\si(\Upsilon_s^h, \mu_s)\d B_s^H,\ \ t\in[0,T],\  \P\textit{-}a.s..
\end{align}
Consequently, we immediately get the following result.

\beg{lem}\label{Le(Meas)}
Assume that \textsc{\textbf{(H)}} holds, and let $X$ be a solution of equation \eqref{EquM} with initial value $X_0=x$.
Assume moreover that $y=x$ and $\mu_t=\sL_{X_t}, t\in[0,T]$ for equation \eqref{FrozeEQ}.
Then, there holds $X_\cdot=\mathcal{G}_{\sL_{X}}(B^H_\cdot)$, where $\mathcal{G}_{\sL_{X}}$ is shown in \eqref{MeasMap} with $\mu=\sL_{X}$.
Furthermore, for each $h\in\mathcal{A}_M$, put $X_\cdot^h:=\mathcal{G}_{\sL_X}\left(B_\cdot^H+(R_Hh)(\cdot)\right)$,
then $X^h$ solves the following equation
\begin{align*}
X_t^h=&x+\int_0^tb(X_s^h, \sL_{X_s})\d s+\int_0^t\sigma(X_s^h, \sL_{X_s})\d(R_Hh)(s)+\int_0^t\sigma(X_s^h,\sL_{X_s})\d B_s^H,\ \ t\in[0,T],\ \P\textit{-}a.s..
\end{align*}
\end{lem}

Therefore, for equation \eqref{LDP-DDsde}, there exists a measurable map $\mathcal{G}^\epsilon:= \mathcal{G}_{\mathscr{L}_{X^\epsilon}}$ such that $X^\epsilon_\cdot=\mathcal{G}^\epsilon(\ep^H B^H_\cdot).$
Moreover, for each $h^\ep\in\mathcal{A}_M$, set
\begin{align}\label{Per2-DDsde}
X_\cdot^{\ep, h^\ep}:=\mathcal{G}^\ep\left(\ep^H B_\cdot^H+(R_Hh^\ep)(\cdot)\right),
\end{align}
then $X^{\ep, h^\ep}$ satisfies the following equation
\begin{align}\label{Per-DDsde}
X_t^{\ep, h^\ep}=&x+\int_0^tb(X_s^{\ep, h^\ep}, \sL_{X_s^\ep})\d s+\int_0^t\sigma(X_s^{\ep, h^\ep},\sL_{X_s^\ep})\d(R_Hh^\ep)(s)\cr
&+\ep^H\int_0^t\sigma(X_s^{\ep, h^\ep},\sL_{X_s^\ep})\d B_s^H,\ \ t\in[0,T],\ \ \P\textit{-}a.s..
\end{align}
In addition, by Theorem \ref{ExU} again, it is easy to obtain the following lemma.
 \beg{lem}\label{ODE}
Assume that \textsc{\textbf{(H)}} holds.
Then there exists a unique function $\{X_t^0\}_{t\in[0,T]}$ such that
$X^0$ belongs to $C^\beta([0,T];\R^d)$  with $\be\in(0,1]$ and satisfies the following deterministic equation
\begin{align}\label{LimSDE}
X_t^0=x+\int_0^t b(X_s^0, \sL_{X_s^0})\d s, \ \ t\in[0,T].
\end{align}
\end{lem}
Let us stress that $X^0$ is a deterministic path and then its distribution is $\sL_{X_s^0}=\de_{X_s^0}$.
In the sequel, we will always denote $X^0$ by the unique solution of equation \eqref{LimSDE}.

Now, we are in the position to introduce the following skeleton equation
\begin{align}\label{SkE}
Z_t^h=x+\int_0^tb(Z_s^h,\sL_{X_s^0})\d s+\int_0^t\sigma(Z_s^h,\sL_{X_s^0})\d(R_H h)(s),\ \ t\in[0,T].
\end{align}
Here, $h\in\H, X^0$ is given in \eqref{LimSDE}, and $\int_0^t\sigma(Z_s^h,\sL_{X_s^0})\d(R_H h)(s)$ is interpreted as a Riemann-Stieltjes integral.
For any $s\in[0,T]$ and $x\in\R^d$, set
\begin{align*}
\bar{b}(s,x):=b(x,\sL_{X_s^0}),\ \ \ \ \ \bar{\si}(s,x):=\sigma(x,\sL_{X_s^0}).
\end{align*}
Using \textsc{\textbf{(H)}} and \cite[Theorem 5.1]{NR}, we know that equation \eqref{SkE} admits a unique solution,
which then allows to define a map as follows
\begin{align}\label{RateF}
\G^0: I_{0+}^{H+1/2}(L^2([0,T],\R^d))\ni R_Hh\mapsto Z^h\in C([0,T]; \R^d).
\end{align}
We would like to mention that when $\ep$ goes to $0$, equation \eqref{LDP-DDsde} reduces to \eqref{LimSDE} and then $\sL_{X_\cdot^\ep}$ tends to $\sL_{X_\cdot^0}$.
Hence, using $\sL_{X_\cdot^0}$, instead of $\sL_{\Upsilon_\cdot^h}$, in the skeleton equation to define the rate function of Theorem \ref{Th(LDP)} below is reasonable.

Below is our main result which states the LDP for equation \eqref{LDP-DDsde}.
\beg{thm}\label{Th(LDP)}
Assume that \textsc{\textbf{(H)}} holds. For each $\ep>0$, let $X^\ep=\{X_t^\ep\}_{t\in[0,T]}$ be the solution to  equation \eqref{LDP-DDsde}.
Then the family $\{X^\ep:\ep>0\}$ satisfies the LDP on $C([0,T]; \R^d)$ with speed $\ep^{2H}$,
in which the rate function $I$ is given by \eqref{1-Suf1(LDP)} with $\G^0$ being defined in \eqref{RateF}.
\end{thm}

In order to prove the theorem, we begin with the following useful lemma.
\beg{lem}\label{est-Rh}
Let $A$ be a $\R^m\otimes\R^d$-valued function such that for any ONB in $\R^m$, saying $\{e_j\}_{j=1}^m$, there is $A_\cdot^T e_j\in \mathcal{H}$, where  $A_\cdot^T$ is the transpose matrix of $A_\cdot$.
Then for every $\psi\in\mathcal{H}$,
\[\int_0^tA_s\d (R_H\psi)(s)=\sum_{j=1}^m\left\< \mathrm{I}_{[0,t]} A_\cdot^Te_j,\psi\right\>_{\mathcal{H}}e_j.\]
\end{lem}
\beg{proof}
According to \eqref{RFRH}, the Fubini theorem and the integral representation \eqref{IRKH}, we obtain that for $\psi\in\mathcal{H}$,
\beg{align*}
\int_0^tA_s\d (R_H\psi)(s)&=\sum_{j=1}^m\left(\int_0^t\<e_j,A_s\d (R_H\psi)(s)\>_{\R^m}\right)e_j\\
&=\sum_{j=1}^m\left(\int_0^t\left\<A_s^Te_j,\left(\int_0^s\ff {\pp K_H} {\pp s}(s,r)(K_H^*\psi)(r)\d r\right)\right\>\d s\right) e_j\\
&=\sum_{j=1}^m\left(\int_0^T \int_0^T\left\< \mathrm{I}_{[0,t]}(s) \mathrm{I}_{[0,s]}(r)A_s^Te_j\ff {\pp K_H} {\pp s}(s,r),(K_H^*\psi)(r)\right\>\d r \d s \right)e_j\\
&=\sum_{j=1}^m\left(\int_0^T\left\< \left(\int_r^T \mathrm{I}_{[0,t]}(s) A_s^Te_j\ff {\pp K_H} {\pp s}(s,r)\d s\right),(K_H^*\psi)(r)\right\>\d r\right)e_j\\
&=\sum_{j=1}^m\left(\int_0^T \left\<\left(K_H^*(\mathrm{I}_{[0,t]}  A_\cdot^Te_j)\right)(r),(K_H^*\psi)(r)\right\>\d r\right)e_j\\
&=\sum_{j=1}^m\left\< \mathrm{I}_{[0,t]} A_\cdot^Te_j,\psi\right\>_{\mathcal{H}}e_j,
\end{align*}
where $\<\cdot,\cdot\>_{\R^m}$ stands for the inner product on $\R^m$, and the last equality is due to the fact that $K_H^*$ is an isometry between $\H$ and $L^2([0,T],\R^d)$.
\end{proof}

We now give some priori estimates to the skeleton equation \eqref{SkE}.

\beg{lem}\label{Le(SkE)}
Assume that \textsc{\textbf{(H)}} holds.
Then for any $M>0$, there exists a positive constant $C_{H,T,M,K_b,\si}$ such that
\begin{align}\label{1Le(SkE)}
\sup_{h\in S_M}\|Z^h\|_{T,\infty}^2\leq C_{H,T,M,K_b,\si}
\end{align}
and
\begin{align}\label{2Le(SkE)}
\sup_{h\in S_M}|Z_t^h-Z_s^h|\leq C_{H,T,M,K_b,\si}(t-s)^{H}, \ \ s,t\in[0,T].
\end{align}
\end{lem}

\beg{proof}
Using the change-of-variables formula \cite[Theorem 4.3.1]{Zahle} and \textsc{\textbf{(H)}}, we first obtain
\begin{align}\label{1PfLe(SkE)}
|Z_t^h|^2&=|x|^2+2\int_0^t\langle Z_s^h, b(Z_s^h,\sL_{X_s^0})\rangle\d s+2\int_0^t\langle Z_s^h, \sigma(Z_s^h,\sL_{X_s^0})\d(R_H h)(s)\rangle\cr
&=:|x|^2+I_1(t)+I_2(t).
\end{align}
By \textsc{\textbf{(H)}} again, we get
\begin{align}\label{2PfLe(SkE)}
I_1(t)\leq&2\int_0^t|Z_s^h|\cdot|b(Z_s^h,\sL_{X_s^0})-b({\bf 0},\delta_{\bf 0})|\d s+2\int_0^t|Z_s^h|\cdot|b({\bf 0},\delta_{\bf 0})|\d s\cr
\leq&2K_b\int_0^t\left(|Z_s^h|^2+|Z_s^h|\cdot|X_s^0|\right)\d s+2|b({\bf 0},\delta_{\bf 0})|\int_0^t|Z_s^h|\d s\cr
\leq&(3K_b+1)\int_0^t|Z_s^h|^2\d s+\left(|b({\bf 0},\delta_{\bf 0})|^2t+K_b\int_0^t|X_s^0|^2\d s\right).
\end{align}
As for the term $I_2(t)$, according to Lemma \ref{est-Rh}, we have
\begin{align}\label{AdPfLe(SkE)}
I_2(t)=2\left\langle\si^T(Z^h_\cdot,\sL_{X_\cdot^0})Z^h_\cdot\mathrm{I}_{[0,t]},h\right\rangle_\H.
\end{align}
This, together with \eqref{EsH}, the Young inequality and \textsc{\textbf{(H)}}, leads to
\begin{align}\label{3PfLe(SkE)}
I_2(t)\leq&2\|\si^T(Z^h_\cdot,\sL_{X_\cdot^0})Z^h_\cdot\mathrm{I}_{[0,t]}\|_\H\cdot\|h\|_\H\cr
\leq&2^{\ff 3 2}H^{\ff 1 2}T^{H-\ff 1 2}\|\si^T(Z^h_\cdot,\sL_{X_\cdot^0})Z^h_\cdot\mathrm{I}_{[0,t]}\|_{L^2}\cdot\|h\|_\H\cr
\leq&2HT^{2H-1}\|h\|_\H^2\int_0^T|\si(Z^h_s,\sL_{X_s^0})|^2\d s+\int_0^t|Z_s^h|^2\d s\cr
\leq& C_{H,T,\si}\|h\|_\H^2+\int_0^t|Z_s^h|^2\d s.
\end{align}
Plugging \eqref{2PfLe(SkE)} and \eqref{3PfLe(SkE)} into \eqref{1PfLe(SkE)} and using the Gronwall inequality yield that for each $h\in S_M$,
\begin{align}\label{4PfLe(SkE)}
\|Z^h\|_{T,\infty}^2&\leq\e^{3K_b+2}
\left(|x|^2+|b({\bf 0},\delta_{\bf 0})|^2T+K_b\int_0^T|X_s^0|^2\d s+C_{H,T,\si}\|h\|_\H^2\right)\cr
&\leq C_{H,T,M,K_b,\si},
\end{align}
which is exactly our first claim \eqref{1Le(SkE)}.

In order to prove the other relation \eqref{2Le(SkE)}, invoke \eqref{SkE} and write that for any $0\leq s<t\leq T$,
\begin{align*}
Z_t^h-Z_s^h=\int_s^tb(Z_r^h,\sL_{X_r^0})\d r+\int_s^t\sigma(Z_r^h,\sL_{X_r^0})\d(R_H{h})(r).
\end{align*}
Owing to \textsc{\textbf{(H)}} and \eqref{4PfLe(SkE)}, we derive that for each $h\in S_M$,
\begin{align}\label{5PfLe(SkE)}
\left|\int_s^tb(Z_r^h,\sL_{X_r^0})\d r\right|
\leq&\int_s^t\left|b(Z_r^h,\sL_{X_r^0})-b({\bf 0},\delta_{\bf 0})\right|\d r+\int_s^t|b({\bf 0},\delta_{\bf 0})|\d r\cr
\leq&K_b\int_s^t\left(|Z_r^h|+|X_r^0|\right)\d r+|b({\bf 0},\delta_{\bf 0})|(t-s)\cr
\leq&C_{H,T,M,K_b}(t-s).
\end{align}
Again using \textsc{\textbf{(H)}} and Lemma \ref{est-Rh}, we arrive at
\begin{align*}
\left|\int_s^t\sigma(Z_r^h,\sL_{X_r^0})\d(R_Hh)(r)\right|^2&=\sum_{j=1}^d\<\mathrm{I}_{[s,t]}\si^T(Z^h_\cdot,\sL_{X_\cdot^0})e_j,h\>_{\mathcal{H}}^2\cr
&\leq \left(\sum_{j=1}^d\|\mathrm{I}_{[s,t]}\si^T(Z^h_\cdot,\sL_{X_\cdot^0})e_j\|_{\mathcal{H}}^2\right)\|h\|_{\mathcal{H}}^2.
\end{align*}
Observe that by \eqref{Isom}, we get
\beg{align*}
&\sum_{j=1}^d\|\mathrm{I}_{[s,t]}\si^T(Z^h_\cdot,\sL_{X_\cdot^0})e_j\|_{\mathcal{H}}^2\cr
&=H(2H-1)\int_s^t\int_s^t |u-r|^{2H-2}\sum_{j=1}^d\left\<\si^T(Z^h_r,\sL_{X_r^0})e_j,\si^T(Z^h_u,\sL_{X_u^0})e_j\right\>\d u\d r\cr
&\leq H(2H-1)\|\si\|_\infty^2\int_s^t\int_s^t |u-r|^{2H-2}\d u\d r\cr
&=2 \|\si\|_\infty^2 (t-s)^{2H}.
\end{align*}
Thus, it follows that
\begin{align*}
\left|\int_s^t\sigma(Z_r^h,\sL_{X_r^0})\d(R_Hh)(r)\right|\leq \sq 2\|\si\|_\infty(t-s)^H \|h\|_{\mathcal{H}}.
\end{align*}
Hence, combining this with \eqref{5PfLe(SkE)}, we conclude that \eqref{2Le(SkE)} holds.
\end{proof}

In the following lemma, we characterize the difference between $X^{\ep, h^\ep}$ and $Z^{h^\ep}$.
\beg{lem}\label{VerCon1}
Assume that \textsc{\textbf{(H)}} holds, and for any $M\in(0,+\infty)$, let  $\{h^\ep:\ep>0\}\subset\mathcal{A}_M$.
Then, there holds
\begin{align*}
\lim\limits_{\ep\ra0}\E\|X^{\ep, h^\ep}-Z^{h^\ep}\|_{T,\infty}^2=0.
\end{align*}
\end{lem}

\beg{proof}
By \eqref{Per-DDsde} and \eqref{SkE}, we have
\begin{align*}
&X_t^{\ep, h^\ep}-Z_t^{h^\ep}\cr
=&\int_0^t\left(b(X_s^{\ep, h^\ep}, \sL_{X_s^\ep})-b(Z_s^{h^\ep},\sL_{X_s^0})\right)\d s+\int_0^t\left(\sigma(X_s^{\ep, h^\ep},\sL_{X_s^\ep})-\sigma(Z_s^{h^\ep},\sL_{X_s^0})\right)\d(R_Hh^\ep)(s)\cr
&+\ep^H\int_0^t\sigma(X_s^{\ep, h^\ep}, \sL_{X_s^\ep})\d B_s^H,\ \ t\in[0,T].
\end{align*}
Using the change-of-variables formula \cite[Theorem 4.3.1]{Zahle}, we derive
\begin{align}\label{0PfLe(VC1)}
|X_t^{\ep, h^\ep}-Z_t^{h^\ep}|^2=&2\int_0^t\left\langle X_s^{\ep, h^\ep}-Z_s^{h^\ep},b(X_s^{\ep, h^\ep}, \sL_{X_s^\ep})-b(Z_s^{h^\ep},\sL_{X_s^0})\right\rangle\d s\cr
&+2\int_0^t\left\langle X_s^{\ep, h^\ep}-Z_s^{h^\ep},\left(\sigma(X_s^{\ep, h^\ep},\sL_{X_s^\ep})-\sigma(Z_s^{h^\ep},\sL_{X_s^0})\right)\d(R_Hh^\ep)(s)\right\rangle\cr
&+2\ep^H\int_0^t\left\langle X_s^{\ep, h^\ep}-Z_s^{h^\ep},\sigma(X_s^{\ep, h^\ep}, \sL_{X_s^\ep})\d B_s^H\right\rangle\cr
=:&J_1(t)+J_2(t)+J_3(t).
\end{align}
In view of \textsc{\textbf{(H)}} and the H\"{o}lder inequality, it is easy to see that
\begin{align}\label{1PfLe(VC1)}
J_1(t)\leq3K_b\int_0^t|X_s^{\ep, h^\ep}-Z_s^{h^\ep}|^2\d s+K_bT\E\|X^\ep-X^0\|_{T,\infty}^2.
\end{align}
For the term $J_2(t)$, it follows from Lemma \ref{est-Rh} that
\begin{align*}
J_2(t)&=2\left\langle\left(\sigma(X_\cdot^{\ep, h^\ep},\sL_{X_\cdot^\ep})-\sigma(Z_\cdot^{h^\ep},\sL_{X_\cdot^0})\right)^T
 (X_\cdot^{\ep, h^\ep}-Z_\cdot^{h^\ep})\mathrm{I}_{[0,t]},h^\ep\right\rangle_\H\cr
 &=:2\left\langle\kappa^\ep\mathrm{I}_{[0,t]},h^\ep\right\rangle_\H.
\end{align*}
Then, by \eqref{EsH}, the Young inequality and \textsc{\textbf{(H)}}, we deduce that for every $h^\ep\in\mathcal{A}_M$,
\begin{align*}
J_2(t)\leq&2\|\kappa^\ep\mathrm{I}_{[0,t]}\|_\H\cdot\|h^\ep\|_\H\cr
\leq&2^{\ff 3 2}H^{\ff 1 2}T^{H-\ff 1 2}\|\kappa^\ep\mathrm{I}_{[0,t]}\|_{L^2}\cdot\|h^\ep\|_\H\cr
\leq&2^{\ff 3 2}H^{\ff 1 2}T^{H-\ff 1 2}\|X^{\ep, h^\ep}-Z^{h^\ep}\|_{0,t,\infty}
\cdot\left\|\left(\sigma(X_\cdot^{\ep, h^\ep},\sL_{X_\cdot^\ep})-\sigma(Z_\cdot^{h^\ep},\sL_{X_\cdot^0})\right)\mathrm{I}_{[0,t]}\right\|_{L^2}\cdot\|h^\ep\|_\H\cr
\leq&\ff 1 3\|X^{\ep, h^\ep}-Z^{h^\ep}\|_{0,t,\infty}^2+C_{H,T}\left\|\left(\sigma(X_\cdot^{\ep, h^\ep},\sL_{X_\cdot^\ep})-\sigma(Z_\cdot^{h^\ep},\sL_{X_\cdot^0})\right)\mathrm{I}_{[0,t]}\right\|^2_{L^2}\cdot\|h^\ep\|^2_\H\cr
\leq&\ff 1 3\|X^{\ep, h^\ep}-Z^{h^\ep}\|_{0,t,\infty}^2
+C_{H,T,M,\si}\left[\int_0^t|X_s^{\ep, h^\ep}-Z_s^{h^\ep}|^2\d s+\E\|X^\ep-X^0\|_{T,\infty}^2\right].
\end{align*}
For the term $J_3(t)$, applying the fractional by parts formula with $\alpha\in(1-\beta,\beta)$, we derive
\begin{align*}
&\int_0^t\left\langle X_s^{\ep, h^\ep}-Z_s^{h^\ep},\sigma(X_s^{\ep, h^\ep}, \sL_{X_s^\ep})\d B_s^H\right\rangle\cr
=&(-1)^\alpha\int_0^tD_{0+}^\alpha((X_\cdot^{\ep, h^\ep}-Z_\cdot^{h^\ep})^T\sigma(X_\cdot^{\ep, h^\ep}, \sL_{X_\cdot^\ep}))(s)D_{t-}^{1-\alpha}B^H_{t-}(s)\d s.
\end{align*}
By \textsc{\textbf{(H)}} and \eqref{2Le(SkE)}, we have
\begin{align*}
&\left|D_{0+}^\alpha((X_\cdot^{\ep, h^\ep}-Z_\cdot^{h^\ep})^T\sigma(X_\cdot^{\ep, h^\ep}, \sL_{X_\cdot^\ep}))(s)\right|\cr
=&\frac{1}{\Gamma(1-\alpha)}\Bigg|\frac{(X_s^{\ep, h^\ep}-Z_s^{h^\ep})^T\sigma(X_s^{\ep, h^\ep}, \sL_{X_s^\ep})}{s^\alpha}\cr
&\qquad\qquad+\alpha\int_0^s\frac{(X_s^{\ep, h^\ep}-Z_s^{h^\ep})^T\sigma(X_s^{\ep, h^\ep}, \sL_{X_s^\ep})-(X_u^{\ep, h^\ep}-Z_u^{h^\ep})^T\sigma(X_u^{\ep, h^\ep}, \sL_{X_u^\ep})}{(s-u)^{\alpha+1}}\d u\Bigg|\cr
\leq&C_{\si,\al}\bigg[|X_s^{\ep, h^\ep}-Z_s^{h^\ep}|s^{-\alpha}+\int_0^s\|Z^{h^\ep}\|_{u,s,H}(s-u)^{H-\alpha-1}\d u\cr
&\qquad+\int_0^s\left(\|X^{\ep, h^\ep}\|_{u,s,\beta}+\|X^{\ep, h^\ep}-Z^{h^\ep}\|_{u,s,\infty}\left(\|X^{\ep, h^\ep}\|_{u,s,\beta}+(\E\|X^\ep\|^2_{u,s,\beta})^\ff 1 2\right)\right)(s-u)^{\beta-\alpha-1}\d u\bigg]\cr
\leq&C_{H,\si,\al,\be}\bigg[|X_s^{\ep, h^\ep}-Z_s^{h^\ep}|s^{-\alpha}+\|Z^{h^\ep}\|_{0,s,H}s^{H-\alpha}\cr
&\qquad\qquad+\left(\|X^{\ep, h^\ep}\|_{0,s,\beta}+\|X^{\ep, h^\ep}-Z^{h^\ep}\|_{0,s,\infty}\left(\|X^{\ep, h^\ep}\|_{0,s,\beta}+(\E\|X^\ep\|^2_{0,s,\beta})^\ff 1 2\right)\right)s^{\beta-\alpha}\bigg].
\end{align*}
Consequently, combining this with the fact that $\left|D_{t-}^{1-\alpha}B^H_{t-}(s)\right|\leq C_{\al,\be}\|B^H\|_{T,\beta}(t-s)^{\alpha+\beta-1}$ and the Young inequality yields
\begin{align}\label{3PfLe(VC1)}
J_3(t)\leq&C_{H,\si,\al,\be}\ep^H\|B^H\|_{T,\beta}\bigg[\|X^{\ep, h^\ep}-Z^{h^\ep}\|_{0,t,\infty} t^\beta+\|Z^{h^\ep}\|_{T,H}t^{H+\beta }\cr
&\qquad\qquad\qquad+\left(\|X^{\ep, h^\ep}\|_{T,\beta}+\|X^{\ep, h^\ep}-Z^{h^\ep}\|_{0,t,\infty}\left(\|X^{\ep, h^\ep}\|_{T,\beta}+(\E\|X^\ep\|^2_{T,\beta})^\ff 1 2\right)\right)t^{2\beta}\bigg]\cr
\leq&\ff 1 3\|X^{\ep, h^\ep}-Z^{h^\ep}\|^2_{0,t,\infty}+C_{H,T,\si,\al,\be}\ep^H\|B^H\|_{T,\beta}\Big[\|Z^{h^\ep}\|_{T,H}+\|X^{\ep, h^\ep}\|_{T,\beta}\cr
&\qquad\qquad\qquad\qquad\qquad\qquad\qquad\qquad\qquad+\ep^H\|B^H\|_{T,\beta}(1+\|X^{\ep, h^\ep}\|^2_{T,\beta}+\E\|X^\ep\|^2_{T,\beta})\Big].
\end{align}
In the spirit of the proofs of Lemmas \ref{Le(SkE)} and \ref{mom-est},
we can derive that the moment estimates of $X^\ep,Z^{h^\ep}$ and $X^{\ep, h^\ep}$ are finite.
Then, plugging \eqref{1PfLe(VC1)}-\eqref{3PfLe(VC1)} into \eqref{0PfLe(VC1)} and taking the expectation
as well as taking into account of the fact that $B^H$ is independent of $\sF_0$, we conclude that for each $h^\ep\in\mathcal{A}_M$,
\begin{align*}
\E\|X^{\ep, h^\ep}-Z^{h^\ep}\|_{0,t,\infty}^2\leq C_{H,T,M,K_b,\si,\al,\be}\left(\int_0^t\E\|X^{\ep, h^\ep}-Z^{h^\ep}\|^2_{0,s,\infty}\d s+\ep^{H}+\E\|X^\ep-X^0\|_{T,\infty}^2\right).
\end{align*}
Here, without loss of generality, we let $\ep\in(0,1)$.\\
Hence, the Gronwall inequality leads to
\begin{align}\label{4PfLe(VC1)}
\E\|X^{\ep, h^\ep}-Z^{h^\ep}\|_{T,\infty}^2\leq C_{H,T,M,K_b,\si,\al,\be}\left(\ep^{H}+\E\|X^\ep-X^0\|_{T,\infty}^2\right).
\end{align}

For $\ep\in(0,1)$, we claim that
\begin{align}\label{5PfLe(VC1)}
\E\|X^\ep-X^0\|_{T,\infty}^2\leq C_{H,T,K_b,\si,\al,\be}\ep^{H}(1+\|X^0\|_{T,\beta}).
\end{align}
Once this is shown, in conjunction with \eqref{4PfLe(VC1)}, we can conclude that
\begin{align*}
\lim\limits_{\ep\ra0}\E\|X^{\ep, h^\ep}-Z^{h^\ep}\|_{T,\infty}^2=0,
\end{align*}
which is the desired assertion.

It remains to show the claim.
From \eqref{LDP-DDsde} and \eqref{LimSDE}, it follows  that
\begin{align*}
X_t^\epsilon-X_t^0=\int_0^t(b(X_s^\ep,\mathscr{L}_{X_s^\ep})-b(X_s^0, \sL_{X_s^0}))\d s+\ep^H\int_0^t\sigma(X_s^\ep,\mathscr{L}_{X_s^\ep})\d B_s^H.
\end{align*}
Similar to \eqref{1PfLe(VC1)} and \eqref{3PfLe(VC1)}, we get
\begin{align*}
&|X_t^\epsilon-X_t^0|^2\cr
=&2\int_0^t\left\langle X_s^\epsilon-X_s^0,b(X_s^\ep,\mathscr{L}_{X_s^\ep})-b(X_s^0, \sL_{X_s^0})\right\rangle\d s
+2\ep^H\int_0^t\left\langle X_s^\epsilon-X_s^0,\sigma(X_s^\ep,\mathscr{L}_{X_s^\ep})\d B_s^H\right\rangle\cr
\leq&K_b\int_0^t(3|X_s^\epsilon-X_s^0|^2+\E|X_s^\epsilon-X_s^0|^2)\d s+\ff 1 2\|X^\ep-X^0\|^2_{0,t,\infty}\cr
&+C_{H,T,\si,\al,\be}\ep^H\|B^H\|_{T,\beta}\Big[\|X^\ep\|_{T,\beta}+\|X^0\|_{T,\beta}+\ep^H\|B^H\|_{T,\beta}(1+\|X^\ep\|^2_{T,\beta}+\E\|X^\ep\|^2_{T,\beta})\Big].
\end{align*}
Taking the expectation and using the fact that $B^H$ is independent of $\sF_0$ again, the Gronwall inequality  yields \eqref{5PfLe(VC1)}.
This completes the proof.
\end{proof}

Now, we can go back to the proof of Theorem \ref{Th(LDP)}.

\emph{\textbf{Proof of Theorem \ref{Th(LDP)}.}}
With the help of Proposition \ref{Suf2(LDP)}, to complete the proof of Theorem \ref{Th(LDP)}, it is sufficient to check that
\textsc{\textbf{(A)}} holds with $\G^0,\G^\ep$ and $\ell(\ep)$ given respectively by  \eqref{RateF}, \eqref{Per2-DDsde} and $\ep^{2H}$.
Note that by \eqref{Per2-DDsde} and \eqref{RateF}, we know that $\mathcal{G}^\ep(\ep^HB^H+R_Hh^\ep)=X^{\ep, h^\ep}$ and
$\G^0(R_Hh^\ep)=Z^{h^\ep}$.
Then from Lemma \ref{VerCon1}, one can see that \textsc{\textbf{(A)}}(i) holds.

Now, we are to verify that \textsc{\textbf{(A)}}(ii) holds.

For any $M\in(0,+\infty)$, let $\{h^n:n\in\mathbb{N}\}\subset\mathcal{S}_M$ satisfying that $h^n$ converges to some element $h$ in $S_M$ as $n\ra+\infty$,
First note that for any $n\geq1$, by \eqref{RateF} we can write $\G^0(R_Hh^n)$ and $\G^0(R_Hh)$ as follows:
\begin{align}\label{0PfT(LDP)}
\G^0(R_Hh^n)=Z^{h^n},\ \ \G^0(R_Hh)=Z^h,
\end{align}
where $Z^{h^n}$ is the solution of equation \eqref{SkE} for $h^n$ replacing $h$ there.
By Lemma \ref{Le(SkE)}, we know that $\{Z^{h^n}\}_{n\geq1}$ is uniformly bounded and equi-continuous in $C([0,T]; \R^d)$.
Then, it follows from the Arzel\`{a}-Ascoli theorem that $\{Z^{h^n}\}_{n\geq1}$ is relatively compact in $C([0,T]; \R^d)$,
which means that for any subsequence of $\{Z^{h^n}\}_{n\geq1}$, there is a further subsequence (not relabelled) such that  $Z^{h^n}$ converges to some $\bar{Z}$ in $C([0,T]; \R^d)$.

Next, we intend to prove the relation $\bar{Z}=Z^h$.
This, along with a standard subsequential argument, easily yields that the full sequence  $Z^{h^n}$ converges to $Z^h$ in $C([0,T];\R^d)$.
Consequently, we have
\begin{align}\label{AdPfT(LDP)}
\lim_{n\ra+\infty}\left\|\G^0(R_Hh^n)-\G^0(R_Hh)\right\|_{T,\infty}=0,
\end{align}
because of \eqref{0PfT(LDP)}. This means that \textsc{\textbf{(A)}}(ii) holds.

We now focus on dealing with this relation.
By \textsc{\textbf{(H)}}, it is easy to see that for each $t\in[0,T]$,
\begin{align*}
\left|\int_0^tb(Z_s^{h^n},\sL_{X_s^0})\d s-\int_0^tb(\bar{Z}_s,\sL_{X_s^0})\d s\right|
\leq& K_b\int_0^t\left|Z_s^{h^n}-\bar{Z}_s\right|\d s\cr
\leq & K_bT\|Z^{h^n}-\bar{Z}\|_{T,\infty}\ra0, \ \ n\ra+\infty.
\end{align*}
Consequently, we derive that for every $t\in[0,T]$,
\begin{align}\label{1PfT(LDP)}
\lim_{n\ra+\infty}\int_0^tb(Z_s^{h^n},\sL_{X_s^0})\d s=\int_0^tb(\bar{Z}_s,\sL_{X_s^0})\d s.
\end{align}
On the other hand, using Lemma \ref{est-Rh}, we obtain
\begin{align}\label{2PfT(LDP)}
&\int_0^t\sigma(Z_s^{h^n},\sL_{X_s^0})\d(R_H{h^n})(s)-\int_0^t\sigma(\bar{Z}_s,\sL_{X_s^0})\d(R_H{h})(s)\cr
=&\int_0^t\left(\sigma(Z_s^{h^n},\sL_{X_s^0})-\sigma(\bar{Z}_s,\sL_{X_s^0})\right)\d(R_H{h^n})(s) +\int_0^t\sigma(\bar{Z}_s,\sL_{X_s^0})\d (R_H({h^n}-h))(s)\cr
=&\sum_{j=1}^d \left\<\mathrm{I}_{[0,t]}\left(\sigma^T(Z_\cdot^{h^n},\sL_{X_\cdot^0})-\sigma^T(\bar{Z}_\cdot,\sL_{X_\cdot^0})\right)e_j,h^n \right\>_{\mathcal{H}}e_j\cr
&+\sum_{j=1}^d\left\<\mathrm{I}_{[0,t]}\sigma^T(\bar{Z}_\cdot,\sL_{X_\cdot^0})e_j,h^n-h\right\>_{\mathcal{H}}e_j\cr
=:&I_1(t)+I_2(t).
\end{align}
Owing to \eqref{EsH} and \textsc{\textbf{(H)}}, we have
\begin{align}\label{3PfT(LDP)}
|I_1(t)|^2=&\sum_{j=1}^d\left\<\mathrm{I}_{[0,t]}\left(\sigma^T(Z_\cdot^{h^n},\sL_{X_\cdot^0})-\sigma^T(\bar{Z}_\cdot,\sL_{X_\cdot^0})\right)e_j,h^n \right\>_{\mathcal{H}}^2\cr
\leq&2HT^{2H-1}\sum_{j=1}^d \left\|\mathrm{I}_{[0,t]}\left(\sigma^T(Z_\cdot^{h^n},\sL_{X_\cdot^0})-\sigma^T(\bar{Z}_\cdot,\sL_{X_\cdot^0})\right)e_j\right\|_{L^2}^2
\cdot\|h^n  \|_{\mathcal{H}}^2\cr
\leq&2dHT^{2H}\|\nabla\si\|_\infty^2\|Z^{h^n}-\bar{Z}\|_{T,\infty}^2\cdot\|h^n  \|_{\mathcal{H}}^2\cr
\leq& C_{H,T,M,d,\si}\|Z^{h^n}-\bar{Z}\|_{T,\infty}^2\ra0,\ \ n\ra+\infty.
\end{align}
For the term $I_2(t)$, since $h^n$ converges to $h$ weakly in $S_M$ as $n\ra+\infty$, we get
\beg{align*}
|I_2(t)|^2=\sum_{j=1}^d\left\<\mathrm{I}_{[0,t]}\left(\sigma^T(\bar{Z}_\cdot,\sL_{X_\cdot^0})\right)e_j,h^n-h\right\>_{\mathcal{H}}^2\ra 0,\quad n\ra +\infty.
\end{align*}
Combining this with \eqref{3PfT(LDP)} and \eqref{2PfT(LDP)}, we derive that for each $t\in[0,T]$,
\begin{align}\label{4PfT(LDP)}
\lim\limits_{n\ra+\infty}\int_0^t\sigma(Z_s^{h^n},\sL_{X_s^0})\d(R_H{h^n})(s)=\int_0^t\sigma(\bar{Z}_s,\sL_{X_s^0})\d(R_H{h})(s).
\end{align}
Recall that $Z^{h^n}$ solves the following equation:
\begin{align*}
Z_t^{h^n}=x+\int_0^tb(Z_s^{h^n},\sL_{X_s^0})\d s+\int_0^t\sigma(Z_s^{h^n},\sL_{X_s^0})\d(R_H h^n)(s),\ \ t\in[0,T].
\end{align*}
Letting $n\ra+\infty$ and using \eqref{1PfT(LDP)} and \eqref{4PfT(LDP)}, we conclude that $\bar{Z}$ satisfies \eqref{SkE},
which implies $\bar{Z}=Z^h$ due to the uniqueness of the solutions to equation \eqref{SkE}.
The proof is now complete.
\qed

\subsection{Moderate deviation principle (MDP)}

In this part, we will focus on the MDP for equation \eqref{LDP-DDsde} as $\ep$ tends to $0$.
More precisely, let
\begin{align}\label{Meq-1}
Y^\ep=\ff {X^\ep-X^0}{\ep^H\zeta(\ep)},
\end{align}
where $\zeta(\ep)\ra+\infty, \ep^H\zeta(\ep)\ra0$ as $\ep\ra0$.
The moderate deviations problem for $\{X^\ep:\ep>0\}$ is indeed to study the asymptotics of
\begin{align*}
\ff 1{\zeta^2(\ep)}\log\P(Y^\ep\in\cdot).
\end{align*}
In view of \eqref{LDP-DDsde} and \eqref{LimSDE}, one can see that $Y^\ep=\{Y^\ep_t,t\in[0,T]\}$ satisfies the following equation
\begin{align}\label{Meq-2}
Y^\ep_t=&\ff 1{\ep^H\zeta(\ep)}\int_0^t\left(b(X^0_s+\ep^H\zeta(\ep)Y_s^\ep,\mathscr{L}_{X_s^\ep})-b(X_s^0, \mathscr{L}_{X_s^0})\right)\d s\cr
&+\ff 1 {\zeta(\ep)}\int_0^t\si\left(X^0_s+\ep^H\zeta(\ep)Y_s^\ep,\mathscr{L}_{X_s^\ep}\right)\d B_s^H.
\end{align}

To show the MDP for equation \eqref{LDP-DDsde}, in additional to \textsc{\textbf{(H)}}, we also need to impose the following condition.
\begin{enumerate}
\item[\textsc{\textbf{(\~{H})}}] The derivative $\nabla b(\cdot,\mu)(x)$ exists and there is a constant $\widetilde{K}_b(>0)$ such that for each $x,y\in\R^d$ and $\mu\in\sP_2(\R^d)$,
\begin{align*}
\left\|\nabla b(\cdot, \mu)(x)-\nabla b(\cdot, \mu )(y) \right\|\leq \widetilde{K}_b|x-y|.
\end{align*}
\end{enumerate}
In order to provide the rate function associated with the MDP, we introduce the following skeleton equation: for every $h\in\H$,
\begin{align}\label{Sk(mdp)}
\tilde{Z}_t^h=\int_0^t\nabla_{\tilde{Z}_s^h} b(\cdot,\sL_{X_s^0})(X_s^0)\d s+\int_0^t\sigma(X_s^0,\sL_{X_s^0})\d(R_H h)(s),\ \ t\in[0,T].
\end{align}
With the help of \textsc{\textbf{(H)}} and \textsc{\textbf{(\~{H})}} and by a similar analysis of \eqref{SkE},
we obtain that there is a unique solution to equation \eqref{Sk(mdp)}.
As a consequence, we may define a map as follows: for any $h\in\H$,
\begin{align}\label{RaFu(mdp)}
\widetilde{\G}^0: I_{0+}^{H+1/2}(L^2([0,T],\R^d))\ni R_Hh\mapsto \tilde{Z}^h\in C([0,T]; \R^d).
\end{align}
Then, our main result in this part can be stated in the following theorem.

\beg{thm}\label{Th(mdp)}
Assume that \textsc{\textbf{(H)}} and \textsc{\textbf{(\~{H})}} hold. For each $\ep>0$, let $Y^\ep=\{Y_t^\ep\}_{t\in[0,T]}$ be shown in \eqref{Meq-1}.
Then the family $\{Y^\ep:\ep>0\}$ satisfies the LDP on $C([0,T]; \R^d)$ with speed $\zeta^{-2}(\ep)$,
in which the rate function $I$ is given by
\begin{align}\label{1-Th(mdp)}
I(f)=\inf_{\{h\in\H:f=\widetilde{\G}^0(R_Hh)\}}\left\{\ff 1 2 \|h\|_{\H}^2\right\},\ \ f\in C([0,T]; \R^d)
\end{align}
with $\widetilde{\G}^0$ being defined in \eqref{RaFu(mdp)}.
\end{thm}

\beg{proof}
We first set
\begin{align*}
\widetilde{\G}^\ep(\cdot):=\ff {\mathcal{G}^\ep(\cdot)-X^0}{\ep^H\zeta(\ep)}.
\end{align*}
Recalling that the definition of $\G^\epsilon$, we derive that $\widetilde{\G}^\ep$ is a measurable map from $C([0,T];\R^d)$ to  $C([0,T];\R^d)$
satisfying $\widetilde{\G}^\ep(\ep^H B_\cdot^H)=Y_\cdot^\ep$.

Next, for any $h^\ep\in\mathcal{A}_M$, let
\begin{align*}
Y_\cdot^{\ep, h^\ep}=\widetilde{\G}^\ep\left(\ep^H B_\cdot^H+\ep^H\zeta(\ep)(R_Hh^\ep)(\cdot)\right),
\end{align*}
then, $Y^{\ep, h^\ep}$ is the unique solution of the following equation:
\begin{align*}
Y_t^{\ep, h^\ep}=&\ff 1{\ep^H\zeta(\ep)}\int_0^t\left(b(X^0_s+\ep^H\zeta(\ep)Y_s^{\ep, h^\ep}, \sL_{X_s^\ep})- b(X_s^0, \mathscr{L}_{X_s^0})\right)\d s\cr
&+\int_0^t\si(X^0_s+\ep^H\zeta(\ep)Y_s^{\ep, h^\ep},\sL_{X_s^\ep})\d(R_Hh^\ep)(s)\cr
&+\ff 1{\zeta(\ep)}\int_0^t\si(X^0_s+\ep^H\zeta(\ep)Y_s^{\ep, h^\ep}, \sL_{X_s^\ep})\d B_s^H,\ \ t\in[0,T],\ \mathbb{P}\textit{-}a.s..
\end{align*}

With Proposition \ref{Suf2(LDP)} in hand, it is enough to verify that \textsc{\textbf{(A)}} holds respectively with $\G^0, \G^\ep$ and $\ell(\ep)$ replaced by $\widetilde{\G}^0, \widetilde{\G}^\ep$ and $\zeta^{-2}(\ep)$.
Along the same lines as in Lemma \ref{VerCon1} and \eqref{AdPfT(LDP)}, we can prove that for any $M\in(0,+\infty)$,
if $\{h^\ep:\ep>0\}\subset\mathcal{A}_M$ and $\{h^n:n\in\mathbb{N}\}\subset\mathcal{S}_M$ such that $h^n$ converges to some element $h$ in $S_M$ as $n\ra+\infty$, we have
\begin{align*}
\lim\limits_{\ep\ra0^+}\E\|Y^{\ep, h^\ep}-\widetilde{\G}^0(R_Hh^\ep)\|_{T,\infty}^2=\lim\limits_{\ep\ra0^+}\E\|Y^{\ep, h^\ep}-\tilde{Z}^{h^\ep}\|_{T,\infty}^2=0
\end{align*}
and
\begin{align*}
\lim_{n\ra+\infty}\left\|\widetilde{\G}^0(R_Hh^n)-\widetilde{\G}^0(R_Hh)\right\|_{T,\infty}=0,
\end{align*}
which mean that \textsc{\textbf{(A)}} holds. The proof is therefore finished.
\end{proof}

\section{Appendix}

\subsection{Two technical lemmas}

\begin{lem}\label{TeLe1}
Let $x,y$ be two positive constants and $0<\al<\be<1$. Then for any $0<r<s$, we have
\begin{align*}
\int_s^r\ff { (x(r-u)^\be)\wedge y } {(r-u)^{\al+1}}\d u\leq \ff {4\be} {(\be-\al)\al}x^{\ff {\al} {\be}}y^{\ff {\be-\al} {\be}}.
\end{align*}
\end{lem}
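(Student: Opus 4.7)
\textbf{Proof plan for Lemma~\ref{TeLe1}.}

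The plan is to reduce the integral to a standard form by a change of variables and then split the region of integration at the natural threshold where the two terms inside the minimum coincide. First I would substitute $v = r-u$, so that $\d v = -\d u$ and the integral becomes
\[
\int_s^r \ff{(x(r-u)^\be)\wedge y}{(r-u)^{\al+1}}\d u = \int_0^{r-s} \ff{(x v^\be)\wedge y}{v^{\al+1}}\d v.
\]
(Here I am reading the bounds in the statement as $0<s<r$; otherwise the integral is negative or undefined.) The key observation is that $xv^\be = y$ precisely at $v_0 := (y/x)^{1/\be}$, and $xv^\be \le y$ iff $v\le v_0$.

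Next I would split into two cases. If $v_0 \ge r-s$, then $(xv^\be)\wedge y = xv^\be$ on the whole interval $(0,r-s)$, and
\[
\int_0^{r-s} x v^{\be-\al-1}\d v = \ff{x(r-s)^{\be-\al}}{\be-\al} \le \ff{x v_0^{\be-\al}}{\be-\al} = \ff{x^{\al/\be}y^{(\be-\al)/\be}}{\be-\al},
\]
using $(r-s)^{\be-\al}\le v_0^{\be-\al}$ and the explicit form of $v_0$. If instead $v_0 < r-s$, I would split the integral at $v_0$:
\[
\int_0^{v_0} x v^{\be-\al-1}\d v + \int_{v_0}^{r-s} y v^{-\al-1}\d v
\le \ff{x v_0^{\be-\al}}{\be-\al} + \ff{y v_0^{-\al}}{\al}
= x^{\al/\be}y^{(\be-\al)/\be}\left(\ff 1 {\be-\al} + \ff 1 \al\right),
\]
and the bracket simplifies to $\be/[\al(\be-\al)]$.

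In either case the bound is at most $\ff{\be}{\al(\be-\al)} x^{\al/\be} y^{(\be-\al)/\be}$, which is comfortably below the stated $\ff{4\be}{(\be-\al)\al} x^{\al/\be} y^{(\be-\al)/\be}$ (the factor $4$ just gives slack, probably there to absorb symmetric versions used elsewhere in the paper). There is no real obstacle here: the only delicate point is recognizing $v_0=(y/x)^{1/\be}$ as the correct threshold so that the two resulting power integrals each balance to the same expression $x^{\al/\be}y^{(\be-\al)/\be}$.
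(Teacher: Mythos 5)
Your proof is correct, and it takes a genuinely different route from the paper's. You split the integration range at the exact threshold $v_0=(y/x)^{1/\be}$ where the two branches of the minimum cross, and evaluate the two resulting power integrals directly; both case computations check out, and the sum gives the sharper constant $\ff{\be}{\al(\be-\al)}$, a factor of $4$ better than the stated bound. The paper instead avoids any case analysis by first bounding $a\wedge b\leq \ff{2ab}{a+b}$, changing variables to $[0,1]$, and then invoking an integral inequality from an earlier work (\cite[Lemma 3.4]{FZ}); that route is shorter on the page but imports an external estimate and loses the factor of $4$. Your argument is more elementary and self-contained, and the only point worth flagging is one you already noticed: the statement's ``$0<r<s$'' must be read as $0<s<r$ for the integral $\int_s^r$ and the quantity $(r-s)^{\be-\al}$ to make sense, which is consistent with how the lemma is applied in the paper (with $s\leq u\leq r$).
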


\begin{proof}
Observe that for any $a>0,b>0$, there holds $a\wedge b\leq \ff{2ab}{a+b}.$
Then, we deduce that
\beg{equation*}\label{ine-sig}
\beg{split}
\int_s^r\ff { (x(r-u)^\be)\wedge y } {(r-u)^{\al+1}}\d u&\leq \int_s^r\ff { 2x(r-u)^{\be-\al-1}y} {x(r-u)^\be+y}\d u\\
&= 2x(r-s)^{\be-\al }\int_0^1 \ff {    v^{\be-\al-1}} {1+ x(r-s)^\be y^{-1} v^\be} \d v\\
&\leq  \ff {4\be x(r-s)^{\be-\al }} {(\be-\al)\al\left(1+\left(x(r-s)^\be y^{-1}\right)^{\ff {\be-\al} {\be}}\right)}\\
&=\left(\ff {4\be} {(\be-\al)\al}\right)\left(\ff {x(r-s)^{\be-\al }y^{\ff {\be-\al} {\be}}} {y^{\ff {\be-\al} {\be}}+\left(x(r-s)^\be \right)^{\ff {\be-\al} {\be}}}\right)\\
&\leq \ff {4\be} {(\be-\al)\al}x^{\ff {\al} {\be}}y^{\ff {\be-\al} {\be}},
\end{split}
\end{equation*}
where in the second inequality, we have used the inequality in \cite[Lemma 3.4]{FZ} just setting $p=\be,p'=\al+1$ (then $C(p,p')=\ff {2\be} {(\be-\al)\al}$) and $x=1$ there.
\end{proof}
The following result has been obtained in the proof of \cite[Theorem 3.1]{FZ}, and we give it here for  the convenience of readers.
\begin{lem}\label{TeLe2}
Assume that $f\in C^\be([0,T];\R^d)$ with $\be\in (0,1]$,
and let $0=t_0<t_1<t_2<\cdots<t_n=T$. Then there holds
\begin{align*}
\|f\|_{T,\be}\leq n^{1-\be}\max_{0\leq k\leq n-1}\{\|f\|_{t_k , t_{k+1} ,\be }\}.
\end{align*}
\end{lem}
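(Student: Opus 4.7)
The plan is to reduce the global H\"older seminorm to a telescoping bound over the partition, then invoke concavity of $x \mapsto x^\beta$ to pay exactly the $n^{1-\beta}$ factor.

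First I would set $M := \max_{0 \le k \le n-1} \|f\|_{t_k, t_{k+1}, \beta}$ and fix an arbitrary pair $0 \le s < t \le T$. Let $i,j \in \{0,\ldots,n-1\}$ be the unique indices with $s \in [t_i, t_{i+1}]$ and $t \in [t_j, t_{j+1}]$. The case $i = j$ is trivial: $s,t$ lie in a single subinterval, so $|f(t)-f(s)| \le M (t-s)^\beta \le n^{1-\beta} M (t-s)^\beta$. So assume $i < j$ and use the telescoping identity
\[
f(t) - f(s) \;=\; \bigl(f(t_{i+1}) - f(s)\bigr) + \sum_{k=i+1}^{j-1} \bigl(f(t_{k+1}) - f(t_k)\bigr) + \bigl(f(t) - f(t_j)\bigr).
\]
Each summand lies inside one of the subintervals $[t_k, t_{k+1}]$, so by the definition of $M$,
\[
|f(t) - f(s)| \;\le\; M \Bigl[ (t_{i+1} - s)^\beta + \sum_{k=i+1}^{j-1} (t_{k+1} - t_k)^\beta + (t - t_j)^\beta \Bigr].
\]

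Now I would apply the power-mean (Jensen) inequality. Since $\beta \in (0,1]$, the map $x \mapsto x^\beta$ is concave on $[0,\infty)$, so for any positive reals $a_1,\ldots,a_N$,
\[
\sum_{l=1}^N a_l^\beta \;\le\; N^{1-\beta}\Bigl(\sum_{l=1}^N a_l\Bigr)^\beta.
\]
In our setting the pieces on the right-hand side number $N = j - i + 1 \le n$, and their total length equals $t - s$. Therefore
\[
|f(t) - f(s)| \;\le\; M \cdot N^{1-\beta} (t-s)^\beta \;\le\; n^{1-\beta} M (t-s)^\beta.
\]
Dividing by $(t-s)^\beta$ and taking the supremum over $0 \le s < t \le T$ gives the desired conclusion.

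There is no real obstacle; the only point that requires a moment's care is checking that the number of summands in the telescoping decomposition is bounded by $n$ (which it is, since $j - i + 1 \le n$), so that the power-mean bound delivers the correct exponent $n^{1-\beta}$ and not a larger one.
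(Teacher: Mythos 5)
Your proof is correct and follows essentially the same route as the paper: a telescoping decomposition of $f(t)-f(s)$ along the partition points between $s$ and $t$, followed by the concavity (Jensen/power-mean) bound $\sum_{l=1}^N a_l^\beta \le N^{1-\beta}(\sum_{l=1}^N a_l)^\beta$ with $N\le n$. The paper implements the same idea slightly more compactly by telescoping over all $n$ clipped points $(t_k\wedge t)\vee s$ (so degenerate increments vanish automatically), but the argument is the same.
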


\begin{proof}
By using the Jensen inequality, we obtain that for any $0\leq s<t\leq T$,
\begin{align*}
\ff {|f(t)-f(s)|} {|t-s|^{\be}}&\leq \sum_{k=0}^{n-1}\ff {|f((t_{k+1}\wedge t)\vee s)-f((t_k\wedge t)\vee s)|} {|t-s|^{\be}}\\
&\leq \sum_{k=0}^{n-1}\ff {|(t_{k+1}\wedge t)\vee s-(t_k\wedge t)\vee s|^\be} {|t-s|^{\be}}\|f\|_{(t_k\wedge t)\vee s, (t_{k+1}\wedge t)\vee s,\be }\\
&\leq \max_{0\leq k\leq n-1}\{\|f\|_{t_k , t_{k+1} ,\be }\}\sum_{k=0}^{n-1}\ff {|(t_{k+1}\wedge t)\vee s-(t_k\wedge t)\vee s|^\be} {|t-s|^{\be}}\\
&=\max_{0\leq k\leq n-1}\{\|f\|_{t_k , t_{k+1} ,\be }\}\ff n {|t-s|^{\be}}\sum_{k=0}^{n-1}\ff {|(t_{k+1}\wedge t)\vee s-(t_k\wedge t)\vee s|^\be} {n}\\
&\leq\max_{0\leq k\leq n-1}\{\|f\|_{t_k , t_{k+1} ,\be }\}\ff n {|t-s|^{\be}}\left(\sum_{k=0}^{n-1}\ff { (t_{k+1}\wedge t)\vee s-(t_k\wedge t)\vee s } {n}\right)^{\be}\\
&= n^{1-\be}\max_{0\leq k\leq n-1}\{\|f\|_{t_k , t_{k+1} ,\be }\},
\end{align*}
which yields the desired result.
\end{proof}

\subsection{Moment estimate for multiplicative fractional SDE with time dependent coefficients}

In this part, we consider the following multiplicative fractional SDE with time dependent coefficients
\begin{equation}\label{Lemma 3.2-0}
\tilde{X}(t)=\tilde{X}(S)+\int_S^t\tilde{b}(s,\tilde{X}(s))\d s+\int_S^t\tilde{\sigma}(s,\tilde{X}(s))\d B_s^H,\ \ S\leq t\leq T,
\end{equation}
where $S\in[0,T)$, $\tilde{X}(S)$ is a $\sF_S$-measurable random variable, the coefficients $\tilde{b}:[S,T]\times\R^d\ra\R^d, \tilde{\si}:[S,T]\times\R^d\ra\R^d\otimes\R^d$ are two measurable functions, and $B^H$ is a $d$-dimensional fractional Brownian motion with $H\in(1/2,1)$.

Below is a moment estimate for equation \eqref{Lemma 3.2-0}, which plays a crucial role in the proof of Theorem \ref{ExU}.

\begin{lem}\label{mom-est}
Let $\ff 1 {2}<\be<H$. Assume that $\tilde{b}$ is of linear growth:
$$|\tilde{b}(r,x)|\leq K_{\tilde{b}}+L_{\tilde{b}}(1+|x|),\ \ r\in[S,T],\ \ x\in\R^d,$$
and $\tilde{\sigma}$ is bounded  by a positive constant $M_{\ti{\si}}$ and
\beg{align*}
\|\tilde{\si}(r,x)-\tilde{\si}(s,y)\|\leq K_{\tilde{\si}}|r-s|^{\ga_0}+L_{\tilde{\si}}|x-y|,\ \ r,s\in[S,T], \ \ x,y\in\R^d
\end{align*}
for some $\ga_0 >1-\be$, where $K_{\tilde{b}},L_{\tilde b},K_{\tilde{\si}},L_{\tilde{\si}}$ are all positive constants.
Then for the solution $\tilde{X}$ of equation \eqref{Lemma 3.2-0} and any $p\geq 1$, there holds
\begin{align*}
&\left[\E\left(\|\tilde{X}\|_{S,T,\infty}+\|\tilde{X}\|_{S,T,\be}\right)^p\right]^{\ff 1 p}\cr
\leq& C_1(T-S,H,\be,\ga_0,p)\left(1+K_{\tilde{b}}+K_{\tilde{\sigma}} \right)+ C_2(T-S,H,\be,\ga_0,p)\left(\E|\tilde{X}(S)|^p\right)^{\ff 1 p},
\end{align*}
where $C_i(T-S,H,\be,\ga_0,p),i=1,2$ are two positive constants which are independent of $S,T$, $K_{\tilde b},K_{\tilde \si}$, $\tilde X(S)$ and decreasing as $T-S$ decreases, and satisfy
\begin{align*}
\lim_{T-S\rightarrow 0^+}C_1(T-S,H,\be,\ga_0,p)=0\ \ \ \mathrm{and}\ \ \ \lim_{T-S\rightarrow 0^+}C_2(T-S,H,\be,\ga_0,p)=1.
\end{align*}
\end{lem}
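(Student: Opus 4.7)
The plan is to combine the fractional integration-by-parts formula \eqref{Fibpf} with a Young-type estimate for the stochastic integral, a localized absorption argument for the H\"older seminorm on small subintervals, Lemma \ref{TeLe2} for gluing local estimates, and Fernique's theorem at the end to produce moments. The entire argument is pathwise up to the last step.

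Fix $\be\in(1/2,H)$ and choose $\al\in(1-\be,\be\wedge\ga_0)$. For $S\le s<t\le T$, the drift gives
\beg{equation*}
\left|\int_s^t\tilde b(r,\tilde X(r))\d r\right|\le(K_{\tilde b}+L_{\tilde b})(t-s)+L_{\tilde b}\|\tilde X\|_{s,t,\infty}(t-s),
\end{equation*}
while for the stochastic integral, applying \eqref{Fibpf} and the estimate $|D_{t-}^{1-\al}B^H_{t-}(r)|\le C_{H,\al,\be}\|B^H\|_{s,t,\be}(t-r)^{\al+\be-1}$ (as already used in the proof of Lemma \ref{EsNoi}), together with the Weyl representation of $D_{s+}^\al(\tilde\si(\cdot,\tilde X(\cdot)))$, boundedness of $\tilde\si$ and its joint H\"older condition, yields
\beg{equation*}
\left|\int_s^t\tilde\si(r,\tilde X(r))\d B^H_r\right|\le C_{H,\al,\be}\|B^H\|_{s,t,\be}\Big[M_{\tilde\si}(t-s)^\be+K_{\tilde\si}(t-s)^{\ga_0+\be}+L_{\tilde\si}\|\tilde X\|_{s,t,\be}(t-s)^{2\be}\Big].
\end{equation*}

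Dividing the increment bound by $(t-s)^\be$ on a subinterval of length $\Delta=t-s$, one obtains an inequality of the form $\|\tilde X\|_{s,s+\Delta,\be}\le U+V\|B^H\|_{S,T,\be}\Delta^\be\|\tilde X\|_{s,s+\Delta,\be}$, where $U$ gathers the remaining contributions. Choosing $\Delta$ as an appropriate negative (fractional) power of $\|B^H\|_{S,T,\be}$ so that the second coefficient does not exceed $1/2$, absorbing the H\"older-norm term, and estimating the sup-norm by $\|\tilde X\|_{s,s+\Delta,\infty}\le |\tilde X(s)|+\Delta^\be\|\tilde X\|_{s,s+\Delta,\be}$, then iterating across at most $N\le 1+(T-S)/\Delta$ subintervals while invoking Lemma \ref{TeLe2} to glue the local seminorms, produces a pathwise bound
\beg{equation*}
\|\tilde X\|_{S,T,\infty}+\|\tilde X\|_{S,T,\be}\le F(T-S,\|B^H\|_{S,T,\be})(1+K_{\tilde b}+K_{\tilde\si})+G(T-S,\|B^H\|_{S,T,\be})|\tilde X(S)|,
\end{equation*}
in which $F$ carries a strictly positive power of $(T-S)$ (so $F\to 0$ as $T-S\to 0^+$), $G\to 1$ as $T-S\to 0^+$, and both $F,G$ have polynomial growth in $\|B^H\|_{S,T,\be}$ of degree strictly less than $2$. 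Taking $L^p$-norms, using independence of $\tilde X(S)$ from $B^H$, the Minkowski inequality, and Fernique's theorem to bound all polynomial moments of $F,G$ in $\|B^H\|_{S,T,\be}$, yields the desired estimate with $C_1,C_2$ having the required monotonicity and limiting behavior.

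The main obstacle is the second step: the coefficient of $\|\tilde X\|_{s,s+\Delta,\be}$ must be forced below $1$ by choosing $\Delta$ depending on the random quantity $\|B^H\|_{S,T,\be}$, so the number of iteration steps $N$ is itself random, and one must carefully control how this propagates through the gluing. Keeping the polynomial growth of $F,G$ in $\|B^H\|_{S,T,\be}$ strictly below degree $2$ is essential for Fernique to apply, and hinges on the constraints $\be>1/2$ and $\al<\be$. A secondary subtle point is to extract the sharp asymptotic $C_2\to 1$ (not merely boundedness) as $T-S\to 0^+$, which requires isolating the single occurrence of $|\tilde X(S)|$ in the pointwise identity $\tilde X(t)=\tilde X(S)+\int_S^t(\ldots)$ and verifying that every further appearance of $\tilde X(S)$ in the iteration comes multiplied by a positive power of $(T-S)$.
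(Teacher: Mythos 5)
Your proposal follows essentially the same route as the paper's proof: fractional integration by parts for the stochastic integral, a pathwise absorption of the H\"older seminorm on subintervals of random length $\Delta\sim\|B^H\|_{S,T,\be}^{-1/\be}$, iteration over $[(T-S)/\Delta]+1$ subintervals glued together by Lemma \ref{TeLe2}, and moment bounds for the Gaussian seminorm at the end. The only inessential difference is that you bound the increments of $\si(\cdot,\tilde X(\cdot))$ linearly in $\|\tilde X\|_{s,t,\be}$ (producing a factor $(t-s)^{2\be}$), while the paper first truncates by $2M_{\ti\si}$, applies Lemma \ref{TeLe1} to obtain the fractional power $\|\tilde X\|_{s,t,\be}^{\al/\be}$, and then linearizes by Young's inequality; both give the same absorption threshold for $\Delta$. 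Note also that for this lemma only polynomial moments of $\|B^H\|_{S,T,\be}$ are needed, so the degree-strictly-less-than-$2$ restriction you emphasize is not actually required here; it becomes essential only later, for the exponential moments in Corollary \ref{pri-est-X}.

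There is, however, one genuine gap as written: you allow the coefficient $G$ multiplying $|\tilde X(S)|$ in the pathwise bound to be random (polynomial in $\|B^H\|_{S,T,\be}$) and then take expectations ``using independence of $\tilde X(S)$ from $B^H$''. That independence is not available: $\tilde X(S)$ is only assumed $\sF_S$-measurable, and for $S>0$ the increments of the fractional Brownian motion on $[S,T]$ --- hence $\|B^H\|_{S,T,\be}$ --- are correlated with $\sF_S$. This is precisely the situation in which the lemma is applied in the proof of Theorem \ref{ExU}, where $S=T_1>0$ and $\tilde X(S)=X_{T_1}$. Replacing independence by H\"older's inequality would force a moment of $\tilde X(S)$ of order strictly higher than $p$ onto the right-hand side and would also spoil the asymptotics $C_2\to1$. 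The paper's proof is organized exactly to avoid this: since $\ti\si$ is bounded, $|\tilde X(S+k\Delta)|$ enters each step only through the drift, with per-step multiplier $1+2C\Delta$, so the compounded factor over $N\le 1+(T-S)/\Delta$ steps is $(1+2C\Delta)^N\le \exp\{2C(T-S)+1\wedge(T-S)^{\be}\}$, a deterministic quantity; consequently the coefficient of $|\tilde X(S)|$ in \eqref{ad-supn} is deterministic, only Minkowski's inequality is needed, and $C_2\to1$ follows. Your closing remark about isolating the single occurrence of $|\tilde X(S)|$ points in the right direction, but you must push it far enough to make that coefficient deterministic, not merely of low polynomial growth in $\|B^H\|_{S,T,\be}$.
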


\begin{proof}
By \eqref{Lemma 3.2-0}, we derive that for any $S\leq s<t\leq T$,
\begin{align}\label{1Pfmom}
\tilde{X}(t)-\tilde{X}(s)=\int_s^t\tilde{b}(r,\tilde{X}(r))\d r+\int_s^t\tilde{\sigma}(r,\tilde{X}(r))\d B_r^H.
\end{align}
Observe that the conditions $\ff 1 {2}<\be<H$ and $\ga_0 >1-\be$ imply $1-\be<\be\wedge \ga_0$.
Applying the fractional integration by parts formula \eqref{Fibpf}  with $\al\in (1-\be, \beta\wedge \ga_0)$, we have
\begin{equation}\label{Lemma 3.2-1}
\int_s^t\tilde{\sigma}(r,\tilde{X}(r))\d B_r^H=(-1)^\alpha\int_s^tD_{s+}^\alpha\tilde{\sigma}(\cdot,\tilde{X}(\cdot))(r)D_{t-}^{1-\alpha}B^H_{t-}(r)\d r.
\end{equation}
Using \eqref{FrDe}, the condition imposed on $\ti\si$ and Lemma \ref{TeLe1}, we obtain
\begin{align}\label{Lemma 3.2-2}
&\left|D_{s+}^\alpha\tilde{\sigma}(\cdot,\tilde{X}(\cdot))(r)\right|\nonumber\\
=&\frac{1}{\Gamma(1-\alpha)}\left|\frac{\tilde{\sigma}(r,\tilde{X}(r))}{(r-s)^\alpha}+\alpha\int_s^r\frac{\tilde{\sigma}(r,\tilde{X}(r))-\tilde{\sigma}(u,\tilde{X}(u))}{(r-u)^{\alpha+1}}\d u\right|\nonumber\\
\leq &\frac{1}{\Gamma(1-\alpha)}\Bigg[\ff {M_{\ti{\si}}}{(r-s)^{\alpha}}
+\alpha\int_s^r\left(K_{\tilde{\sigma}}(r-u)^{\ga_0-\alpha-1}+\ff{(L_{\tilde{\si}}\|\tilde{X}\|_{u,r,\beta} (r-u)^{\be})\wedge(2M_{\ti{\si}})}
{(r-u)^{\alpha+1}}
\right)\d u\Bigg]\nonumber\\
\leq&\frac{1}{\Gamma(1-\alpha)}\left[M_{\ti{\si}}(r-s)^{-\alpha}+\alpha K_{\tilde{\sigma}}(r-s)^{\ga_0-\al}+\ff {2^{3-\ff {\al} {\be}}\be} {\be-\al}M_{\ti{\si}}^{\ff {\be-\al} {\be}}(L_{\tilde{\si}}\|\tilde{X}\|_{s,r,\beta})^{\ff {\al} {\be}} \right],
\end{align}
and
\begin{align}\label{Lemma 3.2-3}
\left|D_{t-}^{1-\alpha}B^H_{t-}(r)\right|
=&\frac{1}{\Gamma(\alpha)}\left|\frac{B^H_r-B^H_t}{(t-r)^{1-\alpha}}+(1-\alpha)\int_r^t\frac{B^H_r-B^H_u}{(u-r)^{2-\alpha}}\d u\right|\nonumber\\
\leq&\ff{\be}{\Gamma(\alpha)(\alpha+\beta -1)}\|B^H\|_{S,T,\beta}(t-r)^{\alpha+\beta-1}.
\end{align}
Then, plugging \eqref{Lemma 3.2-2} and \eqref{Lemma 3.2-3} into \eqref{Lemma 3.2-1} yields
\begin{align}\label{Lemma 3.2-4}
&\left|\int_s^t\tilde{\sigma}(r,\tilde{X}(r))\d B_r^H\right|\cr
\leq& C\|B^H\|_{S,T,\beta}\Bigg[\int_s^t(r-s)^{-\alpha}(t-r)^{\alpha+\beta -1}\d r\nonumber\\
&\ \ \ \ \ \ \ \ \ \ \ \ \ \ \ +\int_s^t\left(K_{\tilde{\sigma}}(r-s)^{\ga_0-\al}(t-r)^{\al+\be -1}+ (t-r)^{\alpha+\beta -1}\|\tilde{X}\|_{s,r,\beta}^{\ff {\al} {\be}} \right)\d r\Bigg]\cr
\leq & C\|B^H\|_{S,T,\beta}\left[(t-s)^{\be }+K_{\tilde{\si}}(t-s)^{\ga_0+\be}+ (t-s)^{\al+\be }\|\tilde{X}\|_{s,t,\beta}^{\ff {\al} {\be}}\right].
\end{align}
Here and in what follows, $C$ denotes a generic constant independent of $K_{\tilde{b}},K_{\tilde{\si}},T,S$ and $B^H$.\\
As for the drift term, owing to the linear growth of $\ti b$, it is easy to see that
\begin{align*}
\left|\int_s^t\tilde{b}(r,\tilde{X}(r))\d r\right|\leq \int_s^t\left[K_{\tilde{b}}+L_{\ti b}\left(1+|\tilde{X}(r)|\right)\right]\d r.
\end{align*}
So, combining this with \eqref{Lemma 3.2-4} and \eqref{1Pfmom} yields
\begin{align}\label{3.1-2}
&|\tilde{X}(t)-\tilde{X}(s)|\leq \int_s^t|\tilde{b}( r,\tilde{X}(r))|\d r+\left|\int_s^t\tilde{\si}( r, \tilde{X}(r))\d  B_r^H\right|\cr
&\quad \leq K_{\tilde{b}}(t-s)+L_{\ti b}\int_s^t(1+|\tilde{X}(r)|)\d r\cr
&\qquad +C \|B^H\|_{S,T,\be}\left[(t-s)^{\be }+ K_{\tilde{\si}}(t-s)^{\ga_0+\be }+(t-s)^{\al+\be }\| \tilde{X}\|_{s,t,\be}^{\ff {\al} {\be}}\right]\cr
&\quad\leq \left(K_{\tilde{b}}+L_{\ti b}\right)(t-s)+L_{\ti b}(t-s)^\be\left(\int_s^t|\tilde{X}(r)|^{\ff 1 {1-\be}}\d r\right)^{1-\be} \cr
&\qquad +C \| B^H\|_{S,T,\be}\left[(t-s)^{\be }+ K_{\tilde{\sigma}}(t-s)^{\ga_0+\be }+(t-s)^{\al+\be }\| \tilde{X}\|_{s,t,\be}^{\ff {\al} {\be}}\right],
\end{align}
where we use the H\"{o}lder inequality in the last inequality. \\
Consequently, it follows that
\begin{align*}
\ff {|\tilde{X}(t)-\tilde{X}(s)|}{(t-s)^\be}
& \leq  \left(K_{\tilde{b}}+L_{\ti b}\right)(t-s)^{1-\be}+L_{\ti b}\left(\int_s^t|\tilde{X}(r)|^{\ff 1 {1-\be}}\d r\right)^{1-\be}\cr
&\quad +C \|B^H\|_{S,T,\be}\left[1+ K_{\tilde{\sigma}}(t-s)^{\ga_0}+(t-s)^{\al }\|\tilde{ X}\|_{s,t,\be}^{\ff {\al} {\be}}\right].
\end{align*}
Note that there holds
$$(t-s)^{\al }\|\tilde{X}\|_{s,t,\be}^{\ff {\al} {\be}}\leq \ff {\be-\al} \be+\ff{\al}{\be}(t-s)^\be\|\tilde{X}\|_{s,t,\be}$$
because of the Young inequality.
Consequently, we get
\begin{align}\label{3.1-add_1}
\|\tilde{X}\|_{s,t,\be}&\leq\left(K_{\tilde{b}}+L_{\ti b}\right)(t-s)^{1-\be}+L_{\ti b}\left(\int_s^t|\tilde{X}(r)|^{\ff 1 {1-\be}}\d r\right)^{1-\be}\nonumber\\
&\quad+C \|  B^H\|_{S,T,\be}\left(1+ K_{\tilde{\sigma}}(t-s)^{\ga_0 }+(t-s)^\be\| \tilde{X}\|_{s,t,\be}\right)\nonumber\\
&\leq \left(K_{\tilde{b}}+L_{\ti b}(1+|\tilde{X}(s)|)\right)(t-s)^{1-\be}+L_{\ti b}\left(\int_s^t|\tilde{X}(r)-\tilde{X}(s)|^{\ff 1 {1-\be}}\d r\right)^{1-\be}\nonumber\\
&\quad+C \|B^H\|_{S,T,\be}\left(1+ K_{\tilde{\sigma}}(t-s)^{\ga_0 }+(t-s)^\be\| \tilde{X}\|_{s,t,\be}\right)\nonumber\\
&\leq \left(K_{\tilde{b}}+L_{\ti b}(1+|\tilde{X}(s)|)\right)(t-s)^{1-\be}+L_{\ti b}\left(\int_s^t(r-s)^{\ff \be {1-\be}}\d r\right)^{1-\be}\|\tilde{X}\|_{s,t,\be}\nonumber\\
&\quad +C \|B^H\|_{S,T,\be}\left(1+ K_{\tilde{\sigma}}(t-s)^{\ga_0 }+(t-s)^\be\| \tilde{X}\|_{s,t,\be}\right)\nonumber\\
&=C\left(\|B^H\|_{S,T,\be}\left(1+K_{\tilde{\sigma}}(t-s)^{\ga_0}\right)+(t-s)^{1-\be}\left(1+K_{\tilde{b}}+|\tilde{X}(s)|\right)\right)\nonumber\\
&\quad+C(t-s)^\be\left(\|B^H\|_{S,T,\be}+(t-s)^{1-\be}\right)\|\tilde{X}\|_{s,t,\be},
\end{align}
where the second inequality is due to the Minkowski inequality.\\
Without of lost generality, we assume that $C\geq1$ in \eqref{3.1-add_1}, and let
\beg{align}\label{ad-De}
\Delta=\left(\ff {1\wedge (T-S)^{\be }} {6C\|B^H\|_{S,T,\be}}\right)^{\ff {1} { \be}}\wedge\left(\ff {1\wedge (T-S)^{\be }} {6C(1+K_{\tilde{\si}})\|B^H\|_{S,T,\be }}\right)^{\ff 1 {\ga_0+\be}}\wedge \ff {1\wedge (T-S)^{\be }} {6C(1+K_{\tilde{b}})},
\end{align}
which implies that $\Delta<1$ and
\begin{align}\label{2Pfmom}
\left[\|B^H\|_{S,T,\be}\left(\Delta^{\be} \vee\left((1+K_{\tilde{\si}}) \Delta^{\ga_0+\be  } \right)\right)\right]\vee \left[(1+K_{\tilde{b}})\Delta\right]\leq\ff {1\wedge (T-S)^{\be }} {6C}.
\end{align}
Taking $t=s+\Delta$ in \eqref{3.1-add_1}, we arrive at
\begin{align}\label{ad-n-Xbe}
\|\tilde{X}\|_{s,s+\Delta,\be}&\leq \ff {C\left(\|B^H\|_{S,T,\be}(1+K_{\tilde{\sigma}}\Delta^{\ga_0 })+\Delta^{1-\be}(1+K_{\tilde{b}}+|\tilde{X}(s)|)\right)} {1-C\Delta^\be(\|B^H\|_{S,T,\be}+\Delta^{1-\be})}\cr
&\leq \ff {C\left(\|B^H\|_{S,T,\be}(1+K_{\tilde{\sigma}}\Delta^{\ga_0 })+\Delta^{1-\be}\left(1+K_{\tilde{b}}+|\tilde{X}(s)|\right)\right)} {1-\ff 1 6-\ff 1 6}\cr
&\leq2C\left(\|B^H\|_{S,T,\be}(1+K_{\tilde{\sigma}}\Delta^{\ga_0 })+\Delta^{1-\be}\left(1+K_{\tilde{b}}+|\tilde{X}(s)|\right)\right).
\end{align}
To simplify the notations, we set $\varsigma:=2C\Delta$ and for every $k\in \mathbb{N}$,
$$\Pi_k:=\|\tilde{X}\|_{S+k\Delta,(S+(k+1)\Delta)\wedge T,\be}\Delta^\be.$$
By \eqref{ad-n-Xbe} and \eqref{2Pfmom}, we deduce that for every $k\in \mathbb{N}$,
\begin{align}\label{Fan-add1}
\Pi_k&\leq 2C\left( \|B^H\|_{S,T,\be}(\Delta^{\be}+K_{\tilde{\sigma}}\Delta^{\ga_0+\be })+(1+K_{\tilde{b}})\Delta+\Delta |\tilde{X}(S+k\Delta)|\right)\cr
&\leq 2C\left( \|B^H\|_{S,T,\be}(\Delta^{\be}+(1+K_{\tilde{\sigma}})\Delta^{ \ga_0+\be  })+(1+K_{\tilde{b}})\Delta+\Delta |\tilde{X}(S+k\Delta)|\right)\cr
&\leq  2C\left(\ff {1\wedge (T-S)^{\be }} {6C}+\ff {1\wedge (T-S)^{\be}} {6C}+\ff {1\wedge (T-S)^{\be }} {6C} \right)+\varsigma|\tilde{X}(S+k\Delta)|\cr
&= 1\wedge (T-S)^{\be } +\varsigma|\tilde{X}(S+k\Delta)|,
\end{align}
which leads to
\begin{align}\label{3Pfmom}
|\tilde{X}(S+k\Delta)| & \leq \left|\tilde{X}(S+k\Delta)-\tilde{X}(S+(k-1)\Delta)\right|+\left|\tilde{X}(S+(k-1)\Delta)\right|\cr
&\leq \Pi_{k-1}+\left|\tilde{X}(S+(k-1)\Delta)\right|\cr
&\leq 1\wedge (T-S)^{\be } +(1+\varsigma)\left|\tilde{X}(S+(k-1)\Delta)\right|.
\end{align}
Using \eqref{3Pfmom} recursively, one can see that
\begin{align*}
|\tilde{X}(S+k\Delta)|&\leq (1+\varsigma)^k|\tilde{X}(S)|+\left(\sum_{i=0}^{k-1}(1 +\varsigma)^i\right)(1\wedge (T-S)^{\be })\\
&=(1+ \varsigma)^k|\tilde{X}(S)|+\ff {(1+ \varsigma)^{k}-1} {\varsigma}(1\wedge (T-S)^{\be }).
\end{align*}
Then, substituting this into \eqref{Fan-add1} implies
\begin{align*}
\Pi_k\leq (1+ \varsigma)^{k}(1\wedge (T-S)^{\be }) +\varsigma(1+ \varsigma)^k|\tilde{X}(S)|
= (1+ \varsigma)^{k}\left(1\wedge (T-S)^{\be }+\varsigma|\tilde{X}(S)|\right).
\end{align*}
Consequently, it follows that
\begin{align}\label{Fan-add2}
\|\tilde{X}\|_{S,T,\infty}& \leq |\tilde{X}(S)|+\sum_{i=0}^{\left[\ff {T-S} {\Delta}\right] }\|\tilde{X}\|_{S+i\Delta,(S+(i+1)\Delta)\wedge T,\be}\Delta^\be\cr
&=|\tilde{X}(S)|+\sum_{i=0}^{\left[\ff T {\Delta}\right] }\Pi_{i}\cr
&\leq |\tilde{X}(S)|+\ff {(1+ \varsigma)^{\left[\ff {T-S} {\Delta}\right]+1}-1} {\varsigma}\left(1\wedge (T-S)^{\be } +\varsigma|\tilde{X}(S)|\right)\cr
&=(1+\varsigma)^{1+[ \ff {T-S} {\Delta} ]} |\tilde{X}(0)|+\ff {(1+ \varsigma)^{\left[\ff {T-S} {\Delta}\right]+1}-1} {\varsigma}\left(1\wedge (T-S)^{\be}\right) \cr
& \leq(1+\varsigma)^{1+[ \ff {T-S} {\Delta} ]} |\tilde{X}(S)|+ (1+\varsigma)^{ [\ff {T-S} {\Delta} ]}  \left(\left[\ff {T-S} {\Delta}\right]+1\right)\left(1\wedge (T-S)^{\be }\right),
\end{align}
where the last inequality is due to the following relation
\beg{align*}
\ff {(1+\varsigma)^x-1} {\varsigma} =\ff x {\varsigma}\int_0^{\varsigma} (1+r)^{x-1}\d r \leq x (1+\varsigma)^{x-1}, \ \ x>1.
\end{align*}
Noting that there hold $\varsigma=2C\Delta\leq 1\wedge (T-S)^{\be}$ and $\varsigma^{-1}\log(1+\varsigma)\leq 1$, we find that
\begin{align}\label{Fan-add3}
(1+ \varsigma)^{ [\ff {T-S} {\Delta} ]+1}&\leq\exp\left\{\ff {T-S+\Delta} {\Delta} \log(1+ \varsigma)\right\}\cr
&= \exp\left\{ (2C(T-S)+2C\Delta) \ff {\log(1+ \varsigma)} {\varsigma}\right\}\cr
&\leq \exp\left\{ 2C(T-S)+1\wedge (T-S)^{\be} \right\}
\end{align}
and
\begin{align}\label{4Pfmom}
(1+ \varsigma)^{ [\ff {T-S} {\Delta} ]}\leq \e^{2C(T-S)}.
\end{align}
Additionally, by \eqref{ad-De} we get
\begin{align}\label{Fan-add4}
&\left[\ff {T-S} {\Delta}\right] \leq\ff {T-S} {\Delta}\nonumber\\
&\leq  C\Bigg\{\left((1\vee (T-S))\|B^H\|_{S,T,\be}^{\ff 1 {\be }}\right) \vee \left[\left(  { (1+K_{\tilde{\si}})\|B^H\|_{S,T,\be}}  \right)^{\ff 1 {\ga_0+\be}}\left((T-S)\vee (T-S)^{\ff {\ga_0} {\ga_0+\be}}\right)\right]\cr
&\qquad\quad\vee \left[ (1+K_{\tilde{b}})\left( (T-S)\vee (T-S)^{1-\be}\right)\right]\Bigg\}\cr
&=:G(T-S,K_{\tilde{b}},K_{\tilde{\si}}, B^H).
\end{align}
Then, plugging \eqref{Fan-add3}-\eqref{Fan-add4} into \eqref{Fan-add2}, we obtain
\begin{align}\label{ad-supn}
\|\tilde{X}\|_{S,T,\infty}&\leq \exp\left\{ 2C(T-S)+1\wedge (T-S)^{\be }\right\}  |\tilde{X}(S)|\nonumber \\
&\quad\,+ (1\wedge (T-S)^{\be })e^{2C(T-S)} \left(1+G(T-S,K_{\tilde{b}},K_{\tilde{\sigma}},B^H)\right).
\end{align}
Since $\E\|B^H\|_{S,T,\be}^{q}\leq C_{q,H,\be}(T-S)^{q(H-\be)}$ for any $q\geq1$ (see, e.g., \cite[Lemma 8]{Saussereau12}), we derive that for each $p\geq1$,
\begin{align}\label{ad-EH1}
&\left[\E (G(T-S,K_{\tilde{b}},K_{\tilde{\sigma}},B^H)^p)\right]^{\ff 1 p}\cr
\leq&  C\bigg((T-S)^{\ff H {\be }}\vee (T-S)^{\ff {H-\be } {\be }}+(T-S)^{\ff {H+\ga_0} {\ga_0+\be}}\vee (T-S)^{\ff {H+\ga_0-\be} {\ga_0+\be}}(1+K_{\tilde{\sigma}})^{\ff 1 {\ga_0+\be}}\cr
&\quad +((T-S)\vee (T-S)^{1-\be})(1+K_{\tilde{b}})\bigg)\cr
 \leq& C\left((T-S)^{\ff H {\be}}\vee (T-S)^{\ff {H-\be} {\be}\wedge(1-\be)}\right)\left(1+ (1+K_{\tilde{\sigma}})^{\ff 1 {\ga_0+\be}}+K_{\tilde{b}}\right)\cr
 \leq& C\left((T-S)^{\ff H {\be}}\vee (T-S)^{\ff {H-\be} {\be}\wedge(1-\be)}\right)(1+K_{\tilde{b}}+ K_{\tilde{\sigma}}),
\end{align}
Here, we have used the fact that
\[\ff H {\be}>\ff {H+\ga_0} {\ga_0+\be}>1,\quad \ff {H+\ga_0-\be} {\ga_0+\be}>\ff {H-\be} {\be},\]
in the second inequality, and invoked the $C_r$-inequality and the fact that $\ga_0+\be>1$ in the last inequality.\\
This, along with \eqref{ad-supn}, implies that for any $p\geq 1$,
\begin{align}\label{5Pfmom}
&\left[\E\left(\|\tilde{X}\|^p_{S,T,\infty}\right)\right]^\ff 1 p\cr
\leq& \exp\left\{ 2C(T-S)+1\wedge (T-S)^{\be }\right\} \left(\E|\tilde{X}(S)|^p\right)^{\ff 1 p}  \cr
&+ C(1\wedge (T-S)^{\be })e^{2C(T-S)} \left(1+\left((T-S)^{\ff H {\be}}\vee (T-S)^{\ff {H-\be} {\be}\wedge(1-\be)}\right)(1+K_{\tilde{b}}+ K_{\tilde{\sigma}})\right).
\end{align}

Next, we shall provide an estimate for the term $[\E(\|\tilde{X}\|_{S,T,\be}^p)]^{\ff 1 p}$.
Applying Lemma \ref{TeLe2} with $n=1+\left[\ff {T-S} {\Delta} \right]$ and using \eqref{Fan-add1}, \eqref{Fan-add4}, \eqref{2Pfmom}, \eqref{ad-supn}, we first have
\begin{align}\label{ad-1be}
\|\tilde{X}\|_{S,T,\be}&\leq \left({ 1+\left[\ff {T-S} {\Delta} \right]}\right)^{1-\be}\max_{0\leq k\leq \left[ \ff {T-S} {\Delta} \right]} \|\tilde{X}\|_{S+k\Delta,(S+(k+1)\Delta)\wedge T,\be}\nonumber\\
&\leq  \left(1+\ff {T-S} {\Delta} \right)^{1-\be} \left(\Delta^{-\be}( 1\wedge (T-S)^{\be})+2C\Delta^{1-\be}\max_{0\leq k\leq \left[ \ff {T-S} {\Delta} \right] }|\tilde{X}(S+k\Delta)|\right)\cr
&\leq\left(1+\ff {T-S} {\Delta} \right)^{1-\be}\left(\ff {T-S} \Delta\right)^\be
+C\left(\Delta+ T-S  \right)^{1-\be} \|\tilde{X}\|_{S,T,\infty}\cr
&\leq \left(1+G(T-S,K_{\tilde{b}},K_{\tilde{\si}},B^H) \right)^{1-\be}G(T-S,K_{\tilde{b}},K_{\tilde{\si}},B^H)^{\be}\cr
&\quad\,+C\left(\Delta+ T-S  \right)^{1-\be} \|\tilde{X}\|_{S,T,\infty}\cr
&\leq \left(1+G(T-S,K_{\tilde{b}},K_{\tilde{\si}},B^H) \right)^{1-\be}G(T-S,K_{\tilde{b}},K_{\tilde{\si}},B^H)^{\be}\cr
&\quad\, +C\left((T-S)^{\be}\vee (T-S)  \right)^{1-\be} \|\tilde{X}\|_{S,T,\infty}.
\end{align}
Then by the Minkowski inequality, the H\"{o}lder inequality, \eqref{ad-EH1}  and  \eqref{5Pfmom}, we have for any $p\geq 1$,
\begin{align*}
&\left[\E\left(\|\tilde{X}\|_{S,T,\be}^p\right)\right]^{\ff 1 p}\\
\leq&
\left[\E\left(1+G(T-S,K_{\tilde{b}},K_{\tilde{\si}},B^H)\right)^p\right]^{\ff {1-\be}p}
\left[\E G(T-S,K_{\tilde{b}},K_{\tilde{\si}},B^H)^p\right]^{\ff {\be}p}\cr
&+C\left((T-S)^{\be}\vee (T-S)  \right)^{1-\be}\left[\E\left(\|\tilde{X}\|^p_{S,T,\infty}\right)\right]^\ff 1 p\cr
\leq&C\left((T-S)^{\ff H {\be}}\vee (T-S)^{\ff {H-\be} {\be}\wedge(1-\be)}\right)^\be\\
&\quad\,\times\left(1+\left((T-S)^{\ff H {\be}}\vee (T-S)^{\ff {H-\be} {\be}\wedge(1-\be)}\right)^{1-\be}\right)(1+K_{\tilde{b}}+ K_{\tilde{\sigma}})\cr
&+C\left((T-S)^{\be}\vee (T-S)  \right)^{1-\be}\bigg[\exp\left\{ 2C(T-S)+1\wedge (T-S)^{\be }\right\} \left(\E|\tilde{X}(S)|^p\right)^{\ff 1p} \cr
&\quad+(1\wedge (T-S)^{\be })e^{2C(T-S)} \left(1+\left((T-S)^{\ff H {\be}}\vee (T-S)^{\ff {H-\be} {\be}\wedge(1-\be)}\right)(1+K_{\tilde{b}}+ K_{\tilde{\sigma}})\right)
\bigg]\cr
\leq&\tilde{C}(T-S,H,\be,\ga_0,p)\left(1+K_{\tilde{b}} + K_{\tilde{\si}}  +\left(\E|\tilde{X}(S)|^p\right)^{\ff 1 p}\right),
 \end{align*}
where $\tilde C(T-S,H,\be,\ga_0,p)$ is a positive constant satisfying
\begin{align*}
\tilde{C}(T-S,H,\be,\ga_0,p)=O\left((T-S)^{(H-\be)\wedge ((1-\be)\be)}\right),\  \ \text{when}\ \ T-S\ra 0^+.
\end{align*}
Gathering this and \eqref{5Pfmom}, our proof is now complete.
\end{proof}

\textbf{Acknowledgement}

X. Fan was partially supported by the National Natural Science Foundation of China Grant No. 12371145 and the Natural Science Foundation of Anhui Province Grant No. 2008085MA10.
S.-Q. Zhang was supported in part by the National Natural Science Foundation of China Grant No. 11901604, 12371153.

\end{document}